\documentclass[12pt]{amsart}
\usepackage{enumitem,amssymb,version,aliascnt,geometry,mathrsfs}
\usepackage[all]{xy}
\usepackage{hyperref}
\hypersetup{
pdfauthor={Olivier Haution},
pdftitle={Diagonalisable p-groups cannot fix exactly one point on projective varieties},
pdfkeywords={actions of p-groups, fixed points, degree formula},
hidelinks,
unicode
}

\geometry{left=3cm,right=3cm,top=3cm,bottom=3cm}
\swapnumbers

\DeclareFontFamily{U}{dutchcal}{\skewchar\font=45 }
\DeclareFontShape{U}{dutchcal}{m}{n}{<-> s*[1.0] dutchcal-r}{}
\DeclareFontShape{U}{dutchcal}{b}{n}{<-> s*[1.0] dutchcal-b}{}
\DeclareMathAlphabet{\mathlcal}{U}{dutchcal}{m}{n}
\SetMathAlphabet{\mathlcal}{bold}{U}{dutchcal}{b}{n}

\DeclareMathOperator{\id}{id}
\DeclareMathOperator{\Spec}{Spec}
\DeclareMathOperator{\im}{im}
\DeclareMathOperator{\CH}{CH}
\DeclareMathOperator{\Hom}{Hom}
\DeclareMathOperator{\Aut}{Aut}
\DeclareMathOperator{\Sym}{Sym}
\DeclareMathOperator{\Pic}{Pic}

\DeclareMathOperator{\Diag}{D}

\newcommand{\Oc}{\mathcal{O}}

\newcommand{\Lc}{\mathcal{L}}
\newcommand{\Nc}{\mathcal{N}}
\newcommand{\Fc}{\mathcal{F}}
\newcommand{\Ic}{\mathcal{I}}
\newcommand{\Rc}{\mathcal{R}}
\newcommand{\Wc}{\mathcal{W}}
\newcommand{\Vc}{\mathcal{V}}
\newcommand{\Zz}{\mathbb{Z}}
\newcommand{\Zo}{\mathcal{Z}}
\newcommand{\Fp}{\mathbb{F}_p}
\newcommand{\Pp}{\mathbb{P}}
\newcommand{\Gm}{\mathbb{G}_m}
\newcommand{\Au}{\mathbb{A}^1}
\newcommand{\Auc}{\mathbf{A}^1}
\newcommand{\mf}{\mathfrak{m}}
\newcommand{\pf}{\mathfrak{p}}
\newcommand{\Lchi}[1]{\mathscr{L}_{#1}}
\newcommand{\Lch}{\mathscr{L}}
\newcommand{\sch}{\mathlcal{s}}
\newcommand{\Char}[1]{\widehat{#1}}
\newcommand{\mup}{{\mu_p}}
\newcommand{\mun}{{\mu_n}}
\newcommand{\Gmp}{\Gm/p}
\newcommand{\Gmn}{\Gm/n}

\newtheorem*{theorem*}{Theorem}

\newtheorem{theorem}{Theorem}

\newaliascnt{proposition}{theorem}
\newtheorem{proposition}[proposition]{Proposition}
\aliascntresetthe{proposition}

\newaliascnt{lemma}{theorem}
\newtheorem{lemma}[lemma]{Lemma}
\aliascntresetthe{lemma}

\newaliascnt{corollary}{theorem}
\newtheorem{corollary}[corollary]{Corollary}
\aliascntresetthe{corollary}

\theoremstyle{definition}

\newaliascnt{remark}{theorem}
\newtheorem{remark}[remark]{Remark}
\aliascntresetthe{remark}

\newaliascnt{example}{theorem}
\newtheorem{example}[example]{Example}
\aliascntresetthe{example}

\newaliascnt{definition}{theorem}
\newtheorem{definition}[definition]{Definition}
\aliascntresetthe{definition}

\renewcommand{\theequation}{\thesection.\alph{equation}}
\newcommand{\rref}[1]{(\ref{#1})}
\newcommand{\dref}[2]{(\ref{#1}.\ref{#2})}
\newcommand{\tref}[3]{(\ref{#1}.\ref{#2}.\ref{#3})}

\begin{document}
\begin{abstract}
We prove an algebraic version of a classical theorem in topology, asserting that an abelian $p$-group action on a smooth projective variety of positive dimension cannot fix exactly one point. When the group has only two elements, we prove that the number of fixed points cannot be odd. The main tool is a construction originally used by Rost in the context of the degree formula. The framework of diagonalisable groups allows us to include the case of base fields of characteristic $p$.
\end{abstract}
\author{Olivier Haution}
\address{Mathematisches Institut, Ludwig-Maximilians-Universit\"at M\"unchen, Theresienstr.\ 39, D-80333 M\"unchen, Germany}
\title[Diagonalisable $p$-groups cannot fix exactly one point]{Diagonalisable $p$-groups cannot fix exactly one point on projective varieties}

\email{olivier.haution at gmail.com}
\thanks{This work was supported by the DFG grant HA 7702/1-1.}
\subjclass[2010]{14L30, 14C25, 14C17}
\keywords{actions of $p$-groups, fixed points, degree formula}
\date{\today}
\maketitle

\section*{Introduction}
The following result in algebraic topology was proved in the sixties:
\begin{theorem*}
An orientation preserving diffeomorphism of odd prime power order of a closed oriented positive dimensional manifold cannot fix exactly one point.
\end{theorem*}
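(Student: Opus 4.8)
The plan is to deduce the result from the Atiyah--Singer $G$-signature theorem, turning the hypothesis of a single fixed point into an impossible arithmetic identity in a cyclotomic field. Write $f$ for the diffeomorphism, let $p$ be the odd prime and $p^k$ its order, and set $G=\langle f\rangle\cong\Zz/p^k$ acting on the closed oriented manifold $M$ with $\dim M>0$. First I would record the local structure at the supposed unique fixed point $x$. Since $x$ is isolated, $df_x$ has no eigenvalue $1$; as $f$ preserves the orientation and $p$ is odd, $-1$ is not a $p^k$-th root of unity, so $df_x$ has no real eigenvalue at all. Its eigenvalues thus split into conjugate pairs $\lambda_j,\overline{\lambda_j}$ with $\lambda_j=e^{2\pi i a_j/p^k}$ and $a_j\not\equiv 0\pmod{p^k}$, forcing $\dim M=2m$ with $m\ge 1$.

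Next I would feed this into the $G$-signature theorem. The equivariant signature $\mathrm{Sign}(f,M)$ is the $G$-index of the signature operator, hence a value of a virtual character of $G$; in particular it is an algebraic integer lying in $\Zz[\zeta]$, where $\zeta=e^{2\pi i/p^k}$. On the other hand, because there is exactly one fixed point, the fixed-point formula collapses to a single local term, giving, up to the orientation-dependent sign which is irrelevant below,
\[
\mathrm{Sign}(f,M)=\prod_{j=1}^{m}\frac{\lambda_j+1}{\lambda_j-1}.
\]

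The contradiction then comes from cyclotomic arithmetic. Let $v$ be the valuation attached to the unique prime $\pf$ of $\Zz[\zeta]$ above $p$, normalised so that $v(1-\zeta)=1$. Each $\lambda_j$ is a nontrivial root of unity of $p$-power order, so $v(\lambda_j-1)>0$, whereas $\lambda_j+1$ is a cyclotomic unit: indeed $\Phi_{p^k}(-1)=\Phi_p(-1)=1$ shows that the norm of $1+\zeta$ is $1$, and the same computation applies to any nontrivial $p$-power root of unity. Since $m\ge 1$, the displayed product therefore satisfies $v\bigl(\mathrm{Sign}(f,M)\bigr)=-\sum_{j}v(\lambda_j-1)<0$, so it cannot be an algebraic integer---contradicting the previous paragraph.

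I expect the main obstacle to be purely expository: one must invoke the $G$-signature theorem with the correct normalisation of the local term and be sure that $\mathrm{Sign}(f,M)$ is genuinely an algebraic integer (this uses that the signature operator is elliptic and $G$-equivariant, so its index is an honest virtual representation of $G$). The two places where the hypotheses are indispensable should be highlighted: oddness of $p$ is what makes each $\lambda_j+1$ a unit and rules out the real eigenvalue $-1$, while positive dimension is exactly what guarantees $m\ge 1$, so that the product is nonempty and its valuation is strictly negative. For $p=2$, or in dimension $0$, the argument correctly breaks down.
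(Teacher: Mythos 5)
Your argument is correct, but it should be said up front that the paper never proves this statement: it appears in the introduction as classical background, credited to Atiyah--Bott (via their fixed point formula) and to Conner--Floyd (via equivariant bordism), and the body of the paper proves an algebraic analogue by entirely different means. What you have written is in substance Atiyah--Bott's original argument, i.e.\ the first of the two proofs the paper cites: the fixed point formula for the signature complex converts the hypothesis of a unique fixed point into $\mathrm{Sign}(f,M)=\pm\prod_j(\lambda_j+1)/(\lambda_j-1)$, the left side is an algebraic integer in $\Zz[\zeta]$, and the right side has strictly negative valuation at the unique prime $\pf$ above $p$ because each $\lambda_j-1$ lies in $\pf$ while each $\lambda_j+1$ is a unit (oddness of $p$ entering through $\Phi_{p^m}(-1)=1$, positive dimension through the nonemptiness of the product). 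The only step you state too quickly is ``$x$ isolated $\Rightarrow$ $df_x$ has no eigenvalue $1$'': this is false for general diffeomorphisms (consider $x\mapsto x+x^3$ on $\mathbb{R}$ at the origin) and genuinely uses finite order --- average a Riemannian metric over $\langle f\rangle$ so that $f$ is an isometry, whence $\exp_x$ conjugates $f$ near $x$ to $df_x$; the same remark is what guarantees the fixed point is \emph{simple}, which is the hypothesis under which the local term of the index formula takes the form you used.

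It is worth comparing your route with the paper's own mechanism for its algebraic counterparts \rref{th:odd} and \rref{th:isolated}, because the skeletons are strikingly parallel while the tools are disjoint. In both cases the proof is: a global constraint forces the sum of local contributions to vanish, while the local contribution of an isolated (regular) fixed point is provably nonzero. In your proof the global constraint is integrality of the equivariant signature and the local nonvanishing is the negative $\pf$-adic valuation of the cotangent product; in the paper the global constraint is the degree formula ($\deg\varrho(X)=0$ for projective $X$ without zero-dimensional component, \rref{cor:df}), and the local input is $\varrho_n(X)\neq 0$ at an isolated regular fixed point (\rref{prop:nonvanishing}), glued by the localisation property \rref{prop:rho_open}. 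What each approach buys: index theory gives the statement for smooth closed manifolds but needs $p$ odd, characteristic-zero analysis, and (in this form) a cyclic group; the paper's cycle-theoretic construction dispenses with analysis entirely, works over any field --- including characteristic $p$, where $\Zz/p$ must be replaced by the infinitesimal group $\mu_p$ --- handles abelian $p$-groups and even $p=2$ (the odd-fixed-point-count statement), at the cost of concerning projective varieties rather than manifolds.
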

This was initially a conjecture of Conner-Floyd \cite[\S45]{CF-book-1st}, proved first by Atiyah-Bott \cite[Theorem~7.1]{AB-FixedII} as a consequence of their fixed point formula, and later reproved by Conner-Floyd \cite[(8.3)]{CF-odd_period} using equivariant bordism. In the eighties, this result was generalised to actions of abelian $p$-groups (as opposed to cyclic $p$-groups) independently by Browder \cite[Corollary (1.6)]{Browder-PB} and Ewing-Stong \cite[\S3]{Ewing-Stong}.\\

In the present paper, we discuss the situation in algebraic geometry, and in particular prove the following statement.
\begin{theorem*}
Let $G$ be an abelian $p$-group acting on a smooth projective variety $X$ over an algebraically closed field of characteristic unequal to $p$. Assume that $X$ has no zero-dimensional component. Then $G$ cannot fix precisely one point.

If $G=\Zz/2$, the number of fixed points cannot be odd.
\end{theorem*}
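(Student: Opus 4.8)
The plan is to reduce everything to a congruence satisfied by the tangent weights at the isolated fixed points, and to pass from an arbitrary abelian $p$-group to the cyclic case by induction on $|G|$. Throughout I use the standing hypothesis $\mathrm{char}\neq p$: for every subgroup $C\le G$ the fixed locus $X^C$ is then smooth and projective, and at a fixed point $x$ the tangent space $T_xX$ is a representation of $G$ which, by linear reductivity, splits as a sum of characters $\bigoplus_\chi V_\chi$ with $\chi\in\Char{G}$; the point $x$ is isolated in $X^C$ precisely when $(T_xX)^C=0$. Since $X$ has positive dimension, the assumption $X^G=\{x\}$ forces $x$ to be isolated, hence $(T_xX)^G=0$, so no trivial character occurs in $T_xX$. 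Write $d=\dim X>0$.

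Suppose, for a contradiction, that $X^G=\{x\}$, and argue by induction on $|G|$ (the case $G=1$ being vacuous, as then $X^G=X$). I distinguish two cases according to the characters appearing in $T_xX$. If some subgroup $C\le G$ of order $p$ satisfies $(T_xX)^C\neq0$ — equivalently, some character $\chi$ occurring in $T_xX$ is trivial on $C$ — then $x$ is not isolated in $X^C$. Let $Z$ be the connected component of $X^C$ through $x$: it is smooth, projective and of positive dimension, hence irreducible with no zero-dimensional component; as $C\trianglelefteq G$ the quotient $G/C$ acts on $X^C$ preserving $Z$, and $Z^{G/C}=Z\cap X^G=\{x\}$. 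Since $|G/C|<|G|$, this contradicts the induction hypothesis. In the remaining case $(T_xX)^C=0$ for every subgroup $C$ of order $p$, which means every character occurring in $T_xX$ has trivial kernel; a faithful character forces $G$ to be cyclic, so $G\cong\mu_{p^k}$ and all tangent weights $a_1,\dots,a_d$ lie in $(\Zz/p^k)^\times$.

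It remains to eliminate this last case, and this is where the main tool enters. Applying Rost's construction to the complement $X\smallsetminus\{x\}$, on which $\mu_{p^k}$ acts with the single fixed point removed, should produce — as the algebraic counterpart of the Atiyah--Bott/Conner--Floyd localisation — the relation
\[ \sum_{y\in X^{\mu_{p^k}}}\frac{1}{e(T_yX)}=0 \]
in the localised equivariant Chow ring $\CH^*_{\mu_{p^k}}(\mathrm{pt})[t^{-1}]$, where $\CH^*_{\mu_{p^k}}(\mathrm{pt})=\Zz[t]/(p^kt)$ and $e(T_yX)=\bigl(\prod_i a_i\bigr)t^{d}$. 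With $X^{\mu_{p^k}}=\{x\}$ and each $a_i$ a unit, the left-hand side reduces to the single nonzero term $\bigl(\prod_i a_i\bigr)^{-1}t^{-d}$, a contradiction. The same congruence settles the case $G=\Zz/2$: there the only nontrivial weight is $1$, so when $X^G$ is finite every summand equals $t^{-d}$ and the identity reads $|X^G|\cdot t^{-d}=0$ in $\Fp[t,t^{-1}]$, i.e.\ $|X^G|$ is even.

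The main obstacle is establishing the displayed congruence over an arbitrary algebraically closed field. The topological proofs are unavailable here, and a crude substitute such as counting via Euler characteristics yields only the unweighted congruence $|X^G|\equiv\chi(X)\pmod p$, which is too weak to exclude a single point. One must instead carry out Rost's degree-formula construction genuinely algebraically, producing the weighted relation in the localised equivariant Chow ring; a delicate point is that weights divisible by $p$ (which occur for $\mu_{p^k}$ with $k\ge2$) give zero-divisor Euler classes that cannot simply be inverted, and it is exactly these that the geometric reduction of the second paragraph removes before the congruence is invoked.
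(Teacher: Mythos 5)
Your reduction to the cyclic case is correct, and it is genuinely different from the paper's route: in characteristic prime to $p$ the fixed locus $X^C$ is indeed smooth with $T_x(X^C)=(T_xX)^C$, so either some order-$p$ subgroup $C$ leaves a positive-dimensional component through $x$ (and induction applies to $G/C$ acting on it), or every character occurring in $T_xX$ is faithful, forcing $G\cong\Zz/p^k$ with all tangent weights units. The paper never makes this reduction: it keeps $G$ arbitrary, shows by induction that the connected component of $X^\mup$ through $x$ is $\{x\}$ itself, and then derives the contradiction from three facts about Rost's class --- $\deg\varrho(X)=0$ for $X$ projective with no zero-dimensional component (\rref{cor:df}), $\deg\varrho\neq 0$ at an isolated regular fixed point (\rref{prop:nonvanishing}), and the equivariance statement that $\deg\varrho(X)=0$ whenever the $\mup$-action extends to a $G$-action with $X^G=\varnothing$ (\rref{prop:deg_Ginv}, the content of \S\ref{sect:equiv}).

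The gap is the step you yourself flag as ``the main obstacle'': the congruence $\sum_y e(T_yX)^{-1}=0$ in $(\Zz/p^k)[t,t^{-1}]$ is asserted, not proved, and it cannot be quoted from the literature. Localisation/concentration theorems of this type exist for torus actions (Edidin--Graham) and, in equivariant $K$-theory, for diagonalisable groups (Thomason), but no such theorem is available for equivariant Chow groups of $\mu_{p^k}$-actions; the paper explicitly records that it could not even lift the Rost operation functorially to equivariant modulo-$p$ Chow groups, which is symptomatic of exactly this difficulty. Nor does your reduction make the missing step substantially easier: for $k=1$ one has $X^{\mu_p}=X^G=\{x\}$, and the non-equivariant degree formula (\rref{cor:df} together with \rref{prop:nonvanishing} and \rref{rem:weight}) already yields the contradiction; but for $k\geq 2$ the only congruence the non-equivariant theory produces is the one for the subgroup $\mu_p\subset\mu_{p^k}$, and $X^{\mu_p}$ may strictly contain $\{x\}$ --- killing the contribution of $X^{\mu_p}-\{x\}$ is precisely what \rref{prop:deg_Ginv} accomplishes, using that the $\mu_p$-action there extends to $\mu_{p^k}$ without fixed points. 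So what your proposal omits is exactly the paper's main technical content. The same remark applies to your $\Zz/2$ case, which you also route through the unproven localisation identity, whereas \rref{th:odd} shows the non-equivariant ingredients (\rref{cor:df}, \rref{prop:nonvanishing}, \rref{prop:rho_open}) suffice there.
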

Our main tool is a construction originally due to Rost (the case of $\Zz/2$ may be entirely treated using the related construction of \cite{df}, but we provide a self-contained proof). When the group $\Zz/p$ (with $p$ a prime number) acts on a variety $X$ over a field containing a root of unity of order $p$ (in particular of characteristic unequal to $p$), Rost defines in \cite{Rost-df} a cycle class $\varrho(X)$ in the modulo $p$ Chow group of the fixed locus. He proved the so-called degree formula \cite[Theorem 4.1]{Mer-df-notes} using this class in the case of the action by cyclic permutations of factors on the $p$-th power $X=Y^p$ of a variety $Y$, where the fixed locus is the diagonal $Y$. But this construction happens to be also useful in another extreme case, namely when the fixed locus is finite. In this situation, the degree of the cycle class $\varrho(X)$ is the number of fixed points (modulo $p$), counted with appropriate multiplicities. A crucial observation for the proof of the theorem stated above concerns an equivariance property of Rost's construction, when the $\Zz/p$-action extends to the action of a larger abelian group (we are however unable to construct a functorial lifting of the Rost operation to equivariant modulo $p$ Chow groups).

Let us emphasise the difference of context with the degree formula of \cite[Theorem 4.1]{Mer-df-notes}. The latter is used for questions of arithmetic nature (it asserts the existence of a zero-cycle of a certain degree), a typical consequence being the isotropy of a certain quadratic form. By contrast the theorem above is essentially geometric, and indeed most statements of this paper are not significantly weakened by assuming that the base field is algebraically closed.

Although the consideration of actions of ordinary groups --- as opposed to algebraic groups --- on varieties would suffice to prove the theorem stated above, we chose to write the paper using the more sophisticated framework of diagonalisable groups. This allows in particular to treat the case of infinitesimal diagonalisable $p$-groups (such as $\mup$ in characteristic $p$), which may be of some interest. More importantly, we found that most proofs become more transparent and considerably shorter when the language of diagonalisable groups is used (a notable exception is \S\ref{sect:equiv}, because the notion of equivariant cycles seems to be more complicated for non-constant groups). We prove the following more general theorem.
\begin{theorem*}
Let $G$ be a finite diagonalisable $p$-group acting on a projective variety $X$ over an arbitrary field. Assume that $X$ has no zero-dimensional connected component. Then the set underlying the fixed locus $X^G$ cannot be a single regular closed point of $X$ of degree prime to $p$.

If $G=\mu_2$, the set underlying $X^G$ cannot be an odd number of rational regular points.
\end{theorem*}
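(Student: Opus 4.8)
The plan is to attach to a cyclic subgroup $\mup\subseteq G$ the Rost class $\varrho$, to read off from it a weighted count of the fixed points modulo $p$, and to use the residual action of $G/\mup$ to discard all contributions except that of the points fixed by the whole of $G$; the argument runs by induction on the order of $G$. First I would reduce to the case where $X$ is connected of positive dimension. The component $X_{0}$ containing the fixed point $x$ is $G$-stable (it is the component of a fixed point), so $X^{G}=(X_{0})^{G}=\{x\}$; since $X$ has no zero-dimensional component, $\dim X_{0}>0$, and $x$ is still a regular point of $X_{0}$ of degree prime to $p$. Discarding the remaining components, which carry no fixed point, I assume from now on that $X$ is connected, $\dim X>0$, and $X^{G}=\{x\}$.

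Since $X^{G}$ is zero-dimensional while $\dim X>0$, the group $G$ is nontrivial, so $\Char{G}$ surjects onto $\Zz/p$ and dualising produces a subgroup $\mup\subseteq G$. Set $Q=G/\mup$; as $G$ is commutative, $Q$ acts on $Y:=X^{\mup}$, and $Y^{Q}=X^{G}=\{x\}$. Because $X$ is regular at $x$ and $\mup$ is diagonalisable, $Y$ is regular at $x$ with tangent space the fixed part $(T_{x}X)^{\mup}$. Let $F_{0}$ be the connected component of $Y$ through $x$; it is $Q$-stable and is the unique component meeting $Y^{Q}$. Now comes the dichotomy. If $\dim F_{0}>0$, then $(Q,F_{0})$ again satisfies the hypotheses of the theorem --- $F_{0}$ is connected of positive dimension, regular at its unique $Q$-fixed point $x$, of degree prime to $p$ --- and since $|Q|<|G|$ the inductive hypothesis gives a contradiction. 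If instead $\dim F_{0}=0$, then $F_{0}=\{x\}$ and $(T_{x}X)^{\mup}=0$, so all weights of $\mup$ on $T_{x}X$ are non-zero. I then apply Rost's class $\varrho(X)\in\CH_{*}(Y)/p$: on the one hand the degree formula gives $\deg\varrho(X)=0$ because $\dim X>0$; on the other hand $\deg\varrho(X)$ is a sum of local contributions, that of the isolated regular point $x$ being $[k(x):k]$ times the inverse product of its non-zero weights, a unit modulo $p$. The contributions of the other components of $Y$ are organised by the $Q$-action, and here the equivariance property of Rost's construction is essential: the terms indexed by the components of a single $Q$-orbit agree, and since such an orbit meets $Y^{Q}=\{x\}$ in nothing its length is divisible by $p$, so these terms cancel modulo $p$. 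Hence $\deg\varrho(X)$ equals the unit contribution of $x$, which is non-zero, contradicting $\deg\varrho(X)=0$.

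For $G=\mu_{2}$ no induction is needed, since then $\mup=G$ and $Q$ is trivial, so $Y=X^{G}$ is exactly the given finite set of rational regular points. Each such point is isolated in $Y$, its weights are all equal to the non-zero element of $\Zz/2$, and its contribution to $\deg\varrho(X)$ is therefore $[k(x):k]=1$ modulo $2$. Grouping the points by the local dimension of $X$, the vanishing $\deg\varrho(X)=0$ forces the number of them of each dimension, and hence their total number, to be even; an odd number of such points is thus impossible.

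The step I expect to be the main obstacle is the equivariance, namely making precise in modulo $p$ Chow groups --- and without a functorial equivariant refinement of the Rost operation --- that the contributions over a non-trivial $Q$-orbit cancel. This is delicate over a non-closed field, where a $Q$-orbit of components may be a single component of $Y$ splitting only after base change, so that ``orbit length divisible by $p$'' must be read as a divisibility of degrees; it must also accommodate base fields of characteristic $p$ and infinitesimal $\mup$, where the orbit-counting is to be carried out scheme-theoretically rather than on geometric points. A secondary nuisance is that the fixed loci $F_{0}$ feeding the induction need not be reduced away from $x$, so the inductive statement should be phrased for the schemes produced by iterating the fixed-locus operation, not merely for integral varieties.
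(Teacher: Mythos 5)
Your overall skeleton agrees with the paper's: induction on $|G|$, the dichotomy on the dimension of the component of $X^\mup$ through $x$ (the positive-dimensional branch being exactly where the paper invokes the inductive hypothesis for $G/\mup$), and, in the zero-dimensional branch, the combination of Rost's class, the degree formula \rref{cor:df}, and the nonvanishing of the local term at the regular isolated fixed point \rref{prop:nonvanishing}. Your $\mu_2$ argument likewise matches the paper's proof of \rref{th:odd}, via additivity of $\varrho$ over the cover by the invariant opens $X-\{x_j \mid j\neq i\}$ \rref{prop:rho_open}.

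However, the step you flag as the main obstacle is not merely delicate: the mechanism you propose for it fails. You want to discard the contributions of the components of $Y=X^\mup$ other than $\{x\}$ by arguing that the contributions within a single $Q$-orbit of components agree, and that such an orbit has length divisible by $p$ because it misses $Y^Q$. But a component of $Y$ containing no $Q$-fixed point can perfectly well be $Q$-invariant as a component, so its orbit has length $1$; for instance $Q$ may act on a positive-dimensional projective component with empty fixed locus. Nothing about orbits of components is divisible by $p$, and no cancellation between components occurs, so your argument says nothing about these terms. What is actually needed --- and what constitutes the technical heart of the paper, namely \S\ref{sect:equiv} --- is \rref{prop:deg_Ginv}: if $X'$ is $G$-invariant with $(X')^\mup$ projective and $(X')^G=\varnothing$, then $\deg\varrho(X')=0$. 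Its proof does not compare components at all; it shows that the zero-cycle representing $\varrho(X')$ may itself be chosen $G$-invariant, by tracking the $\Char{G}$-homogeneity of the data $\Lch$ and $\sch$ through the operations $c(-\Lch)$, $\{\sch\}$ and the boundary map (\rref{lemm:div_weight}, \rref{lemm:ZG}), and only then applies the orbit-counting lemma \rref{lemm:deg_nofix} (a zero-dimensional $G$-invariant scheme with no $G$-fixed point has degree divisible by $p$) to the points of that representing cycle, inside each component. With this in hand the paper's endgame is also cleaner than your ``sum of local contributions'': one splits $X^\mup=X^G\sqcup Z$ equivariantly and writes $0=\deg\varrho(X)=\deg\varrho(X-Z)+\deg\varrho(X-X^G)$ by \rref{prop:rho_open}, the first term being a unit in $\Fp$ by \rref{prop:nonvanishing} and the second vanishing by \rref{prop:deg_Ginv}. (A lesser point: your repeated claims that components through fixed points are automatically $G$-stable need the saturation argument of \rref{prop:open_action} for non-constant, e.g.\ infinitesimal, groups; the paper handles this carefully.)
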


The structure of the paper is as follows. The purpose of the first section is mostly to provide some motivation to the reader. We state the main result in the case of the action of an ordinary $p$-group (postponing its proof until the end of the paper), and provide a series of examples aimed at testing the sharpness of the statement, and at motivating the consideration of diagonalisable groups in the rest of the paper (see e.g.\ \rref{ex:roots}). We state the more precise result for actions of the group $\Zz/2$, that is, for involutions, and provide counterexamples to its generalisation to actions of other $p$-groups.

After explaining our notation in \S\ref{sect:notation}, we recall in \S\ref{sect:actions} general facts concerning finite diagonalisable groups and their actions on algebraic varieties. We study in some details $\mun$-torsors, defining the objects $\Lch$ and $\sch$ which play an important role in the sequel.

We then explain in \S\ref{sect:operation} the construction of Rost's operation, already described in the preprints of Rost \cite{Rost-df} and Boisvert \cite{Boi-A-08}. Essentially the only new result in this section is the observation that everything works also in characteristic $p$, provided that $\Zz/p$ is replaced with $\mup$. Even though we are interested in an operation at the level of usual Chow groups, we have to consider higher $K$-cohomology groups in intermediate steps of the construction. Since there is essentially no additional cost, we construct the operation for an arbitrary $p$-torsion cycle module $M$; in the applications $M$ will be the modulo $p$ Milnor $K$-theory.

In \S\ref{sect:equiv}, we prove that if the $\mup$-action on a variety extends to an action of a larger finite diagonalisable group $G$, then the operation of \S\ref{sect:operation} produces a $G$-equivariant cycle. The results of \S\ref{sect:operation} and \S\ref{sect:equiv} are then used in \S\ref{sect:mainth} to prove the main theorem.

\section{Results for ordinary groups}
\numberwithin{theorem}{section}
\numberwithin{lemma}{section}
\numberwithin{proposition}{section}
\numberwithin{corollary}{section}
\numberwithin{example}{section}
\numberwithin{definition}{section}
\numberwithin{remark}{section}
\numberwithin{equation}{section}
\label{sect:results}

In order to state the main theorem, we will use the following terminology concerning group actions on varieties. A more general, but compatible, framework will be described in \S\ref{sect:alg_gp} and used from there on. Let $k$ be a field. An action of an ordinary group $G$ on a $k$-variety is a group morphism $G \to \Aut_k(X)$. For $g\in G$, the closed subscheme $X^g$ of $X$ is defined as the equaliser of the automorphism of $X$ induced by $g$ and the identity of $X$. The fixed locus $X^G$ is the closed subscheme of $X$ defined as the intersection of the subschemes $X^g$ for $g\in G$. Its set of $k$-points $X^G(k)$ coincides with the set of fixed points $X(k)^G$.

\begin{theorem}
\label{th:isolated:ordinary}
Let $X$ be a projective $k$-variety without connected component of dimension zero, and $G$ an ordinary abelian $p$-group acting on $X$. For every $g \in G$, assume that $k$ contains a root of unity whose order is the order of $g$. Then the set underlying $X^G$ cannot be a single regular closed point of $X$ of degree prime to $p$.
\end{theorem}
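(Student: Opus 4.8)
The plan is to deduce the statement by contradiction from the degree interpretation of Rost's class established in \S\ref{sect:operation} together with the equivariance property of \S\ref{sect:equiv}. First I would pass from the ordinary group $G$ to a diagonalisable one: since $k$ contains a root of unity of order equal to the order of each $g \in G$, the $G$-action corresponds to an action of the associated finite diagonalisable $p$-group, so that the operation of \S\ref{sect:operation} and its equivariance become available. Assume then that the set underlying $X^G$ is a single regular closed point $P$ with $d := [\kappa(P):k]$ prime to $p$. Because $X$ has no zero-dimensional component we have $X^G \ne X$, hence $G$ is nontrivial and we may fix a subgroup $H \cong \mup$; write $\bar G = G/H$ for the quotient, which acts on the closed subscheme $X^H$, with $(X^H)^{\bar G} = X^G = \{P\}$.

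Applying Rost's operation to the $H$-action produces a class $\varrho(X)$ in the modulo $p$ Chow group of $X^H$. The first key input, which I would extract from the construction in \S\ref{sect:operation}, is that its degree vanishes, $\deg \varrho(X) = 0$ in $\Fp$, precisely because every connected component of $X$ has positive dimension. This is where the hypothesis on zero-dimensional components enters, and it plays the role of the global vanishing underlying the classical topological statements (for instance on $\Pp^1$ with a $\mup$-action the two fixed-point contributions must cancel).

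The second input is that $X^H$ is smooth and projective, being the fixed locus of a diagonalisable group acting on a smooth projective variety, and that $\varrho(X)$ is $\bar G$-equivariant by \S\ref{sect:equiv}. I would then compute $\deg \varrho(X)$ modulo $p$ by orbit-counting: $\bar G$ permutes the connected components of $X^H$, and by equivariance the total degree-zero contribution of any orbit of length divisible by $p$ (equivalently, of any non-fixed orbit) vanishes modulo $p$. Applying this over the $\bar G$-invariant positive-dimensional components, where the same mechanism operates one dimension lower, the degree localizes modulo $p$ to the contribution of the unique $\bar G$-fixed point $P$. Since $P$ is regular, the multiplicity of $\varrho(X)$ there is a unit $m_P \in \Fp^\times$, so this contribution equals $m_P \cdot d$. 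As $d$ is prime to $p$ we obtain $\deg \varrho(X) \equiv m_P d \not\equiv 0 \pmod p$, contradicting the vanishing above.

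The main obstacle is the interaction between the equivariant orbit-counting and the positive-dimensional invariant components of $X^H$: when $X^H \supsetneq \{P\}$ one cannot merely invoke the case $H = \mup$, and the argument must show that the contributions of all invariant components of positive dimension cancel modulo $p$, leaving only $P$. I expect this to require an induction on $|G|$, using that $\bar G$ acts on each invariant component with fixed locus contained in $\{P\}$, run in parallel with checking that the local multiplicity $m_P$ at the regular point $P$ is invertible modulo $p$. The case $G = \mu_2$ is cleaner: there the weights are trivial, every multiplicity equals $1$, and $\deg \varrho(X)$ is literally the number of fixed points modulo $2$, so the global vanishing forces that number to be even.
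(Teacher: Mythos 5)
Your overall strategy---playing the global vanishing $\deg\varrho(X)=0$ of \rref{cor:df} against a nonzero local contribution at $P$, and using \S\ref{sect:equiv} to kill everything else---is the paper's strategy (the paper proves this theorem by reducing to \rref{th:isolated} via \S\ref{sect:constant}), but your proposal has a genuine gap at its core, which you yourself flag as ``the main obstacle'' without resolving. First, your local term $m_P\cdot d$ is only defined if $\{P\}$ is a connected component of $X^\mup$: the hypothesis makes $P$ the unique fixed point of $G$, not of $\mup$, so a priori the component $Y$ of $X^\mup$ through $P$ has positive dimension, and in that case \rref{prop:nonvanishing} only controls the top-dimensional part $\varrho_{\dim Y}(X)$ and says nothing about the degree, which is a zero-cycle quantity. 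The paper's induction on $|G|$ exists precisely to rule this out: applying the inductive statement to the action of $G/\mup$ on $Y$ (after checking, via a $G$-invariant regular neighbourhood of $P$ and \rref{lemm:fixed_reg}, that $P$ is a regular point of $Y$) forces $\dim Y=0$, hence $Y=\{P\}$ and $X^G=Y$. Second, your mechanism for the remaining components---orbit-counting on connected components, with ``the same mechanism one dimension lower'' for $\bar G$-invariant ones---is not an argument: orbit-counting only handles free orbits, and for an invariant positive-dimensional component $C$ with $C^{\bar G}=\varnothing$ there is nothing ``one dimension lower'' to invoke, since $\varrho(X)|_C$ is not the Rost class of any action on $C$. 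What the paper actually does is apply \rref{prop:deg_Ginv} to the single $G$-invariant open subscheme $X-X^G$, whose $\mup$-fixed locus is $X^\mup - Y$ and whose $G$-fixed locus is empty; the proof of \rref{prop:deg_Ginv} is not component-counting but the representation of $\varrho$ by a $G$-invariant cycle (\rref{lemm:ZG}) combined with \rref{lemm:deg_nofix}, i.e.\ orbit-counting at the level of zero-dimensional $G$-invariant subschemes without fixed points. Until these two steps are supplied, the proof does not close.

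Three smaller points. The statement allows $G$ to be infinite (only its elements are required to have $p$-power order), so before passing to diagonalisable groups you must reduce to finite $G$; this is \rref{lemm:finite}, and your proposal skips it. Your claim that $X^\mup$ is smooth and projective ``being the fixed locus of a diagonalisable group acting on a smooth projective variety'' assumes a smoothness hypothesis the theorem does not contain: only regularity of $X$ at the single point $P$ is given, which is exactly why the paper localises at an invariant regular open neighbourhood. Finally, \S\ref{sect:equiv} does not make $\varrho(X)$ ``$\bar G$-equivariant'' as a class (the introduction explicitly says such a functorial lifting is not constructed); it shows $\varrho(X)$ is \emph{represented} by a $G$-invariant cycle, and that weaker statement is the form in which equivariance can legitimately be used.
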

\begin{proof}
We may assume that $G$ is finite by \rref{lemm:finite} below. By \S\ref{sect:constant}, the $G$-action on $X$ corresponds to the action of some finite diagonalisable $p$-group. Thus the theorem is a special case of \rref{th:isolated}.
\end{proof}

\begin{lemma}
\label{lemm:finite}
Let $G$ be an ordinary group acting on a $k$-variety $X$. Then there is a finitely generated subgroup $G'$ of $G$ such that $X^G=X^{G'}$.
\end{lemma}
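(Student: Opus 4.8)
The plan is to reduce the intersection defining $X^G$ to a finite subintersection by a Noetherian argument, and then to replace the finite set so obtained by the subgroup it generates.

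First I would record the elementary scheme-theoretic fact that the formation of the loci $X^g$ is multiplicative: for $g,h \in G$ one has $X^g \cap X^h \subseteq X^{gh}$ and $X^g = X^{g^{-1}}$, while $X^e = X$ for the neutral element $e$. Indeed, writing $\iota$ for the closed immersion $X^g \cap X^h \hookrightarrow X$, the defining property of the equalisers gives $g\circ \iota = \iota = h \circ \iota$ (where $g,h$ denote the induced automorphisms), whence $(gh)\circ\iota = \iota$ and $\iota$ factors through $X^{gh}$. Since every element of the subgroup $\langle S\rangle$ generated by a subset $S \subseteq G$ is a product of elements of $S$ and their inverses, a straightforward induction on word length then yields $\bigcap_{g\in S} X^g \subseteq X^h$ for all $h\in\langle S\rangle$, and hence $\bigcap_{g\in S} X^g = \bigcap_{g \in \langle S\rangle} X^g = X^{\langle S\rangle}$.

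Next I would use that $X$, being a variety, is a Noetherian scheme, so that its closed subschemes satisfy the descending chain condition (covering $X$ by finitely many Noetherian affine opens reduces this to the ascending chain condition for ideals). For a finite subset $S\subseteq G$ set $X^S = \bigcap_{g\in S}X^g$; these closed subschemes form a family directed downwards, since $X^{S\cup T}\subseteq X^S\cap X^T$. By the descending chain condition this nonempty family admits a minimal element $X^{S_0}$, with $S_0$ finite. For an arbitrary $g\in G$ the inclusion $X^{S_0\cup\{g\}}\subseteq X^{S_0}$ together with minimality forces $X^{S_0\cup\{g\}}=X^{S_0}$, hence $X^{S_0}\subseteq X^g$. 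Letting $g$ range over $G$ gives $X^{S_0}\subseteq \bigcap_{g\in G}X^g = X^G$, and the reverse inclusion is clear, so $X^{S_0}=X^G$.

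Finally I would take $G'=\langle S_0\rangle$, which is finitely generated. By the multiplicativity observation of the first paragraph, $X^{G'}=\bigcap_{g\in S_0}X^g = X^{S_0} = X^G$, as desired. I expect the only point requiring genuine care to be the passage from the set-theoretic to the scheme-theoretic statement: one must invoke the descending chain condition for closed subschemes (not merely for closed subsets), which is exactly where Noetherianity of $X$ enters, and one must work with scheme-theoretic intersections throughout, so that equalities of closed subschemes — rather than of their underlying sets — are obtained.
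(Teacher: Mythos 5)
Your proof is correct and is essentially the paper's argument: both rest on Noetherianity of $X$, which forces the descending family of closed fixed loci to stabilize after finitely many elements. The only differences are organizational --- the paper argues by contradiction, building a strictly decreasing chain $X^{G_{i+1}} \subsetneq X^{G_i}$ indexed by finitely generated subgroups (which makes your multiplicativity lemma unnecessary, since $X^{G'} \subseteq X^g$ is automatic for $g \in G'$ when one works with subgroups throughout), whereas you extract a minimal element over finite subsets directly and then pass to the generated subgroup.
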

\begin{proof}
Assume the contrary. We construct a chain of finitely generated subgroups $G_i\subset G_{i+1}$ of $G$ such that $X^{G_{i+1}} \subsetneq X^{G_i}$. Since $X$ is a noetherian topological space, and the fixed loci are closed, we will obtain a contradiction and thus prove the lemma. Let $G_0=1$, and assume $G_i$ constructed for $i \geq 0$. By assumption $X^G \subsetneq X^{G_i}$. It follows that we may find $g\in G$ such that $X^g \cap X^{G_i} \subsetneq X^{G_i}$, and we let $G_{i+1} \subset G$ be the subgroup generated by $G_i$ and $g$.
\end{proof}

We now illustrate the necessity of the hypotheses of \rref{th:isolated:ordinary}.
\begin{example}[$G$ must be abelian]
Assume that $k$ is algebraically closed, and let $G$ be a non-abelian finite group of order prime to its exponential characteristic. Then there exists an irreducible $G$-representation $V$ over $k$ with $\dim V > 1$ (see e.g.\ \cite[\S I.3.1, Th\'eor\`eme 9]{Ser-78} for the case $k=\mathbb{C}$). The $G$-representation $V\oplus 1$, where $1$ denotes the trivial one-dimensional $G$-representation, admits a single one-dimensional subrepresentation. Therefore the group $G$ acts on the $k$-variety $\Pp(V \oplus 1)$ with a single fixed point.
\end{example}

\begin{example}[the characteristic of $k$ cannot be $p$]
Assume that the characteristic of $k$ is $2$, and consider the action of the group $G=\Zz/2$ on $X= \Pp^1_k$ given by the involution $[x:y] \mapsto [y:x]$. Then the set underlying $X^G$ is the single rational point $[1:1]$. Note however that the scheme $X^G$ does not coincide with this closed point, in fact $X^G \simeq \Spec k[t]/(t^2)$.
\end{example}

\begin{example}[$k$ must contain enough roots of unity]
\label{ex:roots}
Assume that the characteristic of $k$ is not $p$. The endomorphism $[x_0: \cdots :x_{p-1}] \mapsto [x_1: \cdots :x_{p-1}:x_0]$ of $\Pp^{p-1}_k$ has order $p$, hence induces an action of the group $G=\Zz/p$ on $\Pp^{p-1}_k$. The hyperplane $X$ defined by $x_0 +\cdots + x_{p-1} =0$ is $G$-invariant, and has no zero-dimensional component when $p$ is odd. Over an algebraic closure $\overline{k}$ of $k$, the fixed locus $(X^G)_{\overline{k}} = (X_{\overline{k}})^G$ consists of the $(p-1)$ points $[1:\xi:\xi^2:\cdots:\xi^{p-1}]$, where $\xi$ runs over the primitive $p$-th roots of unity in $\overline{k}$. In particular that $X^G$ is a single closed point of degree $p-1$ when $k=\mathbb{Q}$.
\end{example}

Example \rref{ex:roots} also shows that $X^G$ may consist of $n$ regular points with $n$ prime to $p$ (namely $n=p-1$) when $k$ is algebraically closed and $G=\Zz/p$ with $p \neq 2$. This contrasts with the situation for $p=2$:
\begin{theorem}
\label{th:involutions}
Assume that $k$ is algebraically closed of characteristic unequal to two. Let $X$ be a projective $k$-variety without connected component of dimension zero. Then no $k$-involution of $X$ may fix precisely an odd number of points, all of which are regular.
\end{theorem}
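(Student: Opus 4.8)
The plan is to obtain Theorem~\ref{th:involutions} as the case $G=\mu_2$ of the mod-$2$ degree formula produced by Rost's operation, so the substance lies in the $\mu_2$-mechanism rather than in any geometric reduction. First I would dispose of the formalities. As $k$ is algebraically closed of characteristic $\neq 2$, a $k$-involution $\sigma$ of $X$ is the same as an action of the diagonalisable group $\mu_2\simeq\Zz/2$; its fixed locus coincides with $X^{\mu_2}$, and the fixed points are rational. By hypothesis they are regular and $X$ has only positive-dimensional connected components. Setting $F=X^{\sigma}=\{x_1,\dots,x_N\}$, the assertion reduces to the purely numerical statement that $N$ is even.

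Next I would feed $X$ into Rost's operation of \S\ref{sect:operation}, taking for $M$ the modulo-$2$ Milnor $K$-theory. This yields a cycle class $\varrho(X)\in\CH_0(F)/2$. Since $F$ is a disjoint union of $N$ regular rational points, $\CH_0(F)/2\simeq(\Zz/2)^{N}$, and the degree formula for a finite fixed locus identifies $\deg\varrho(X)\in\Zz/2$ with the number of fixed points counted with multiplicities; regularity and rationality make each multiplicity equal to $1$, whence $\deg\varrho(X)\equiv N\pmod 2$. It therefore suffices to establish the vanishing $\deg\varrho(X)=0$.

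This vanishing is the heart of the matter, and the step I expect to resist most. It is genuinely special to $p=2$: for odd primes the analogous degree need not vanish --- Example~\rref{ex:roots} produces $p-1$ fixed points --- so no purely formal manipulation of $\varrho$ can succeed, and one must use that a $\mu_2$-torsor is cut out by a square. Over the free locus $U=X\setminus F$ the quotient $U\to U/\mu_2$ is a $\mu_2$-torsor, classified by the line bundle $\Lch$ of \S\ref{sect:actions} equipped with a trivialisation of $\Lch^{\otimes 2}$, and $\varrho(X)$ is the residue of the associated symbol under the boundary map of the localisation sequence along $F$. To exhibit the factor of $2$ I would pass to the blow-up $\tilde X=\mathrm{Bl}_F X$: as $\sigma$ acts by $-1$ on each $T_{x_i}X$ it acts trivially on the exceptional projective spaces, so the induced $\tilde\sigma$ is a reflection with smooth fixed divisor $E=\bigsqcup_{i=1}^{N}\Pp^{n-1}$, and $\tilde X\to\tilde X/\tilde\sigma=:Y$ is an honest double cover branched along $\bar E$ with $\Oc_Y(\bar E)\simeq L^{\otimes 2}$. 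The number $N$ of components of $E$ is the same, while the relation $[\bar E]=2[L]$ in $\CH^1(Y)$ is the source of the sought-after evenness.

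The two delicate points I would then have to settle are: that replacing $X$ by $\tilde X$ does not change $\deg\varrho$ modulo $2$ (equivalently, that Rost's construction is compatible with blowing up the fixed locus); and that the residue computation genuinely converts $[\bar E]=2[L]$ into $\deg\varrho(X)=0$, rather than into a mere relation among the classes of the components $\bar E_i$. Pinning down the behaviour of the construction exactly along $F$, where the action is not free and where the hypothesis on the characteristic intervenes, is where I anticipate the real difficulty.
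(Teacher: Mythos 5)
Your overall skeleton --- reduce to a $\mu_2$-action, prove $\deg\varrho(X)\equiv N\pmod 2$, prove $\deg\varrho(X)=0$ --- is the same as the paper's (Theorem \rref{th:involutions} is reduced there to Theorem \rref{th:odd}), but you have located the difficulty, and the role of $p=2$, in exactly the wrong place, and the step you leave unresolved is the one that needs no work at all. The vanishing $\deg\varrho(X)=0$ is not the heart of the matter and is not special to $p=2$: it is Corollary \rref{cor:df}, obtained by applying the degree formula \rref{prop:df} (that is, the proper pushforward compatibility \rref{prop:func_rho:proper} of the operation $\rho$) to the structure morphism $X\to\Spec k$, which has degree zero precisely because $X$ has no zero-dimensional connected component. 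This vanishing holds for \emph{every} prime $p$, so your assertion that ``for odd primes the analogous degree need not vanish'' and that ``no purely formal manipulation of $\varrho$ can succeed'' is false. Consequently the entire blow-up/double-cover construction, together with the two ``delicate points'' you admit you cannot settle (compatibility of $\varrho$ with blowing up the fixed locus, and converting $[\bar E]=2[L]$ into the vanishing), is unnecessary --- and, as written, it is a genuine gap, since you never complete it.

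What actually requires work, and what is genuinely special to $p=2$, is the step you dispatch in one clause: that each regular rational fixed point contributes multiplicity $1$ modulo $2$. The paper proves the local statement as Proposition \rref{prop:nonvanishing}, via deformation to the normal cone (Lemma \rref{lemm:normal}) and an explicit computation for representations (Lemma \rref{lemm:rho_rep}); by Remark \rref{rem:weight} the multiplicity at such a point is $\prod_{i\in\Zz/p-\{0\}} i^{-\dim \Vc_i}\in(\Fp)^\times$, where $\Vc$ is the normal representation at the point, and this is forced to equal $1$ only when $p=2$, because then $(\Fp)^\times=\{1\}$. For odd $p$ the multiplicities are nonzero but need not equal $1$ --- this, and not any failure of the vanishing, is how Example \rref{ex:roots} ($p-1$ regular rational fixed points, yet $\deg\varrho(X)=0$) is consistent. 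Finally, assembling the local contributions into $\deg\varrho(X)\equiv N\pmod 2$ uses the localization property \rref{prop:rho_open} applied to the cover by $U_i=X-\{x_j\,|\,j\neq i\}$, each $U_i$ having the single fixed point $x_i$. With your third step replaced by a citation of \rref{cor:df}, and your second step justified by \rref{prop:nonvanishing} and \rref{prop:rho_open}, the proof closes along the paper's lines.
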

\begin{proof}
An involution is the same thing as an action of the group $\mathbb{Z}/2$, and thus of $\mu_2$ since $k$ contains a non-trivial square root of the unity (see \S\ref{sect:constant}). Therefore the statement follows from \rref{th:odd}. Alternatively, it follows directly from \cite[Proposition 4.3 and Theorem 4.6]{df}.
\end{proof}

\begin{example}[\rref{th:involutions} does not generalise to other $2$-groups]
Assume that the characteristic of $k$ is not two. The commuting involutions
\[
[x :y:z] \mapsto [-x :y:z] \quad \text{ and } \quad [x :y:z] \mapsto [x :-y:z] 
\]
define an action of $G=\mathbb{Z}/2 \times \mathbb{Z}/2$ on $X=\mathbb{P}^2_k$. Then the set underlying $X^G$ consists of the three points $[1:0:0],[0:1:0],[0:0:1]$.

For an example with $G$ cyclic and the same $X$, assume that $k$ contains a primitive fourth root of unity $i$. The automorphism of order four
\[
[x :y:z] \mapsto [-x :iy:z]
\]
induces an action of $G=\mathbb{Z}/4$ on $X$. The set underlying $X^G$ again consists of the three points $[1:0:0],[0:1:0],[0:0:1]$.
\end{example}

\section{Notation and basic facts}
\numberwithin{theorem}{subsection}
\numberwithin{lemma}{subsection}
\numberwithin{proposition}{subsection}
\numberwithin{corollary}{subsection}
\numberwithin{example}{subsection}
\numberwithin{definition}{subsection}
\numberwithin{remark}{subsection}
\numberwithin{equation}{subsection}
\renewcommand{\theequation}{\thesubsection.\alph{equation}}

\label{sect:notation}

\subsection{Varieties} The letter $k$ will denote a base field. A variety, or $k$-variety, will be a quasi-projective scheme over $k$. The residue field at a point $x$ of a $k$-variety will be denoted by $k(x)$. A point $x$ of a variety $X$ will be called regular if the local ring $\Oc_{X,x}$ is regular, and we will say that a variety is regular when all its points are regular. A morphism of varieties is a morphism of $k$-schemes. A flat morphism of varieties will always mean a flat morphism with a relative dimension. When $X$ is covered by open subschemes $U_i$ for $i\in I$, the words ``replacing $X$ with the cover $\{U_i,i\in I\}$'' will mean successively replacing $X$ with each $U_i$.

\subsection{Algebraic groups}
\label{sect:alg_gp}
We refer e.g. to \cite[I]{SGA3-1} for the basic definitions concerning algebraic groups and their actions on varieties. An algebraic group will mean a group scheme of finite type over $k$, and actions will always be over $k$.

If an algebraic group $G$ acts on a variety $X$, an open or closed subscheme $Y$ of $X$ will be called $G$-invariant if the restriction of the action morphism $G \times Y \to X$ factors through $Y$. The scheme-theoretic closure of a $G$-invariant open subscheme is a $G$-invariant closed subscheme. 

When an algebraic group $G$ is finite, we denote by $|G|$ its order, defined as the dimension of the $k$-vector space $H^0(G,\Oc_G)$. A finite algebraic group will be called a $p$-group if $p$ is a prime number and the order of $G$ is a power of $p$.

\subsection{Algebraic cycles}
We will use the notation of \cite{Ful-In-98} concerning algebraic cycles, with the following variations. The group of cycles on a variety $X$, denoted by $\Zo(X)$ (instead of $Z_*(X)$), is the free abelian group generated by the classes $[Z]$, for $Z$ an integral closed subscheme of $X$. The Chow group of $X$, denoted by $\CH(X)$ (instead of $A_*(X)$), is the quotient of $\Zo(X)$ modulo the relation of rational equivalence.

\subsection{The degree of a morphism}
\label{def:deg}
We say that a morphism $f\colon Y \to X$ has degree $m$ if $f_*[Y] = m \cdot [X] \in \Zo(X)$. We will also write $m = \deg f$. The morphism $f$ always has a degree when $X$ is integral and $Y$ has no irreducible component of dimension $< \dim X$. 

Setting $\deg [Z] = \deg (Z \to \Spec k)$ for $Z$ an integral closed subscheme of $X$ induces a morphism $\deg \colon \Zo(X) \to \Zz$, which descends to $\CH(X) \to \Zz$ when the variety $X$ is projective. When $p$ is a prime, we will also write $\deg$ for the morphism $\CH(X)/p \to \Fp$.

\subsection{The sheaf of invertible functions}
We denote by $\Gm$ the Zariski sheaf of groups such that $H^0(X,\Gm) = H^0(X,\Oc_X)^\times$ for any variety $X$. If $t\in H^0(X,\Gm)$ and $f\colon Y \to X$ is a morphism, we will often write $t\in H^0(Y,\Gm)$ instead of $f^*(t)$.

\subsection{The deformation variety}\cite[(10.4)]{Rost-Chow}
\label{sect:deformation}
Let $Y\to X$ be a closed immersion, and $\Ic$ the corresponding quasi-coherent ideal of $\Oc_X$. The deformation variety $D$ is the scheme affine over $X$ defined by the quasi-coherent $\Oc_X$-algebra
\[
\Rc = \bigoplus_{n \in \Zz} \Ic^{-n} t^n \subset \Oc_X[t,t^{-1}]
\]
where $\Ic^n = \Oc_X$ for $n \leq 0$. The global section $t \in H^0(D,\Oc_D)$ induces a flat morphism $D \to \Au$, and we have a commutative diagram with cartesian squares
\[ \xymatrix{
X \ar[r] & X \times \Au & X \times (\Au-0) \ar[l]\\
N\ar[r] \ar[u] & D \ar[u] & X \times (\Au-0) \ar[l] \ar[u]_= \\ 
Y \ar[r] \ar[u]& Y \times \Au \ar[u]& \ar[l] Y \times (\Au -0)\ar[u]
}\]
where $N$ is the normal cone of the immersion, and left horizontal arrows are closed immersions, and right horizontal ones their open complements.

\subsection{Cycle modules}
A cycle module will mean a cycle module over $k$, in the sense of \cite{Rost-Chow}. When $M$ is a cycle module and $X$ a variety, the complex of cycles on $X$ with coefficients in $M$ will be denoted by $C(X,M)$, and the Chow group with coefficients in $M$ (the direct sum of the homology groups of $C(X,M)$) by $A(X,M)$. The differential will be denoted by $d \colon C(X,M) \to C(X,M)$. When $E$ is a (closed or open) subscheme of $X$ we denote by $x \mapsto x|_E$ the projection $C(X,M) \to C(E,M)$. An element $t \in H^0(X,\Gm)$ induces an endomorphism $\{t\}$ of $C(X,M)$ and of $A(X,M)$, see \cite[(3.6), (4.6.3)]{Rost-Chow}.

We will use the following statement which does not appear explicitly in \cite{Rost-Chow}.
\begin{lemma}
\label{lemm:connecting}
Let $Y\to X$ and $i\colon Z \to Y$ be two closed immersions. Denote by $u\colon X-Y \to X-Z$ the open immersion. Then for any cycle module $M$ the following diagram commutes (horizontal arrows are connecting homomorphisms).
\[ \xymatrix{
A(X-Z,M)\ar[rr] \ar[d]_{u^*} && A(Z,M) \ar[d]^{i_*} \\ 
A(X-Y,M) \ar[rr] && A(Y,M)
}\]
\end{lemma}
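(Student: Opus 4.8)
The plan is to compare the two connecting homomorphisms directly at the level of Rost's cycle complexes, where each admits an explicit description. Recall that $C(T,M)$ is the direct sum $\bigoplus_{t \in T} M(k(t))$, graded by dimension, and that the differential $d$ is assembled from the residue maps attached to codimension-one specialisations. In this description the pushforward along a closed immersion is the inclusion of the associated subcomplex, while the restriction $x \mapsto x|_{\bullet}$ along an open immersion is the projection onto the complementary quotient complex; the two horizontal arrows of the square are then the boundary maps of the corresponding short exact sequences of complexes, computed by the usual recipe (lift a cycle, apply $d$, and read off the component landing in the subcomplex).

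First I would record the decomposition of the points of $X$ into the three locally closed strata $Z$, $W := Y - Z$ and $X - Y$, giving a decomposition of graded groups
\[
C(X,M) = C(Z,M) \oplus C(W,M) \oplus C(X-Y,M).
\]
Since a residue map only involves points lying in the closure of a given point, and since the closure of a point of $Z$ (resp.\ of $W$) remains inside $Z$ (resp.\ inside the closed subscheme $Y$), the differential $d$ respects this stratification: it carries $C(Z,M)$ into itself, carries $C(W,M)$ into $C(Z,M)\oplus C(W,M)=C(Y,M)$, and carries $C(X-Y,M)$ into all of $C(X,M)$. Reading off the relevant sub-blocks of $d$ identifies the differentials of $C(Y,M)=C(Z,M)\oplus C(W,M)$ and of $C(X-Z,M)=C(W,M)\oplus C(X-Y,M)$, the residues being intrinsic to closures and hence unchanged when passing between $X$, $Y$ and the strata.

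Next I would evaluate the four maps of the square on a representing cycle. A class in $A(X-Z,M)$ is represented by $(\beta,\gamma)$ with $\beta\in C(W,M)$, $\gamma\in C(X-Y,M)$ and $d(\beta,\gamma)=0$; its image under $u^*$ is $\gamma$, while the top connecting homomorphism is obtained by lifting to $(0,\beta,\gamma)\in C(X,M)$, applying $d$, and keeping the $C(Z,M)$-component. The lower connecting homomorphism is computed identically from the lift $(0,0,\gamma)$. Applying $i_*$ to the top route and subtracting the bottom route, the difference of the two representatives lies in $C(Y,M)$ and, after substituting the cocycle relation $d(\beta,\gamma)=0$ (which rewrites one residue of $\gamma$ in terms of $d\beta$), equals $d$ applied to $(0,\beta)\in C(Y,M)$. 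Hence the two images agree in $A(Y,M)$, which is the assertion.

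The computation itself is forced by $d^2=0$, so the only real care required is bookkeeping: verifying that $i_*$ and the restriction maps are genuine chain maps defining the two short exact sequences (standard in \cite{Rost-Chow}), and keeping the sign convention for the connecting homomorphism fixed across both rows, so that the leftover residue term cancels precisely against the boundary $d(0,\beta)$ rather than adding to it.
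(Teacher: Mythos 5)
Your proof is correct and takes essentially the same route as the paper's: both work at the level of Rost's cycle complexes, use the three-strata decomposition $C(X,M)=C(Z,M)\oplus C(Y-Z,M)\oplus C(X-Y,M)$ together with the fact that the differential cannot move support out of a closed subscheme, compute both connecting homomorphisms by the lift-differentiate-restrict recipe, and identify the discrepancy between the two routes as the boundary of the component supported on $Y-Z$ (your $(0,\beta)$, the paper's $\gamma$). No gaps.
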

\begin{proof}
When $B$ is a locally closed subscheme of $X$, we will view $C(B,M)$ as a subgroup of $C(X,M)$. Any element of $A(X-Z,M)$ is represented by some $\alpha \in C(X-Z,M) \subset C(X,M)$ such that $d(\alpha)|_{X-Z}=0$. Writing $\beta = \alpha|_{X-Y}$ and $\gamma = \alpha|_{Y-Z}$, we have $\alpha = \beta + \gamma$. Now we compute in $C(Y,M)$:
\begin{align*}
\delta_Y \circ u^*(\alpha)
&=d(\beta)|_Y\\
&= d(\beta)|_Z + d(\beta)|_{Y-Z} \\ 
&= d(\alpha)|_Z - d(\gamma)|_Z + d(\beta)|_{Y-Z}\\
&= d(\alpha)|_Z - d(\gamma)|_Z - d(\gamma)|_{Y-Z} && \text{ since $d(\alpha) \in C(Z,M)$}\\
&= d(\alpha)|_Z - d(\gamma)|_Y\\
&= i_* \circ \delta_Z(\alpha) - d(\gamma)|_Y.
\end{align*}
Since $\gamma \in C(Y-Z,M) \subset C(Y,M)$ and $Y$ is closed in $X$, the element $d(\gamma)|_Y$ belongs to the image of the differential of $C(Y,M)$, hence vanishes in $A(Y,M)$.
\end{proof}

\subsection{The zero-scheme of a section}
\label{sect:D}
Let $\Fc$ be a quasi-coherent $\Oc_X$-module. A section $s \in H^0(X,\Fc)$ may be viewed as a morphism $s \colon \Oc_X \to \Fc$. The image of the dual morphism $s^\vee \colon \Fc^\vee \to \Oc_X$ is a quasi-coherent ideal $\Ic(s)$ of $\Oc_X$, and we denote by $Z_{\Fc}(s)$ the corresponding closed subscheme of $X$. We say that the section $s$ is \emph{nowhere vanishing} if $Z_{\Fc}(s) = \varnothing$. We denote by $D_{\Fc}(s)$ the open complement of $Z_{\Fc}(s)$ in $X$. Observe that, for any $s,t \in H^0(X,\Fc)$ we have
\begin{equation}
\label{eq:D}
D_{\Fc}(s + t) \subset D_{\Fc}(s) \cup D_{\Fc}(t).
\end{equation}

If $\Lc$ is an invertible module and $s \in H^0(X,\Lc)$, then $D_{\Lc}(s)$ is the locus where the morphism $s \colon \Oc_X \to \Lc$ is an isomorphism. Thus the datum of a nowhere vanishing section of $\Lc$ is equivalent to that of a trivialisation of $\Lc$. 

If $X=\Spec A$ and $s \in A$ is viewed as a section of $\Oc_X$, then we will write $Z(s) = \Spec A/sA$ for $Z_{\Oc_X}(s)$ and $D(s) = \Spec A[s^{-1}]$ for $D_{\Oc_X}(s)$.

\subsection{The first Chern class}
Let $M$ be a cycle module. Let $q \colon L \to X$ be a line bundle with zero-section $i \colon X \to L$ and $\Oc_X$-module of sections $\Lc$. It first Chern class is
\[
c_1(\Lc)=c_1(L)=(q^*)^{-1} \circ i_* \colon A(X,M) \to A(X,M),
\]
and we will also consider the operator
\[
c(-\Lc) = \sum_{j = 0}^{\dim X} (-c_1(\Lc))^j \colon A(X,M) \to A(X,M).
\]

\begin{lemma}
\label{lemm:c1}
Let $L$ be a line bundle on a variety $X$, and $M$ a cycle module. 

\begin{enumerate}[label=(\roman*),ref=\roman*]
\item \label{lemm:c1:Gm} Let $t \in H^0(X,\Gm)$. Then $c_1(L) \circ \{t\} = \{t\} \circ c_1(L)$.

\item \label{lemm:c1:proper} Let $f\colon Y \to X$ be a proper morphism. Then  $f_* \circ c_1(f^*L) = c_1(L) \circ f_*$.

\item \label{lemm:c1:flat} Let $f\colon Y \to X$ be a flat morphism. Then $f^* \circ c_1(L) = c_1(f^*L) \circ f^*$.

\item \label{lemm:c1:conn} Let $Y$ be a closed subscheme of $X$, and $\delta \colon A(X-Y,M) \to A(Y,M)$ the connecting homomorphism. Then $\delta \circ c_1(L|_{X-Y}) = c_1(L|_Y) \circ \delta$.
\end{enumerate}
\end{lemma}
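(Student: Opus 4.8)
The plan is to deduce all four identities formally from the defining relation $q^* \circ c_1(L) = i_*$, where $q \colon L \to X$ is the projection and $i \colon X \to L$ the zero-section, combined with the standard compatibilities between the four basic operations on Chow groups with coefficients (proper pushforward, flat pullback, multiplication by a unit, and the boundary homomorphism) established in \cite{Rost-Chow}. The essential underlying fact is that $q^* \colon A(X,M) \to A(L,M)$ is an isomorphism (this is what makes $(q^*)^{-1}$ meaningful), so that each identity can be checked after composing on the left with an injective flat pullback of this shape.

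To prove (\ref{lemm:c1:Gm}), I would compute on the one hand $q^* \circ \{t\} \circ c_1(L) = \{q^* t\} \circ q^* \circ c_1(L) = \{q^* t\} \circ i_*$, using that flat pullback carries $\{t\}$ to $\{q^* t\}$; and on the other hand $q^* \circ c_1(L) \circ \{t\} = i_* \circ \{t\} = \{q^* t\} \circ i_*$, where the last equality is the projection formula for the proper morphism $i$ together with $i^* q^* t = (q \circ i)^* t = t$. Since $q^*$ is injective, the two sides coincide.

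For (\ref{lemm:c1:proper}) and (\ref{lemm:c1:flat}) I would pull $L$ back along $f$, obtaining the line bundle $f^* L$ with projection $q_Y$ and zero-section $i_Y$, together with the induced morphism $\tilde f \colon f^* L \to L$ satisfying $q \circ \tilde f = f \circ q_Y$ and $\tilde f \circ i_Y = i \circ f$; the latter square is again cartesian, because the zero-section of a pulled-back line bundle is the pullback of the zero-section. When $f$ is proper, base change in the square gives $q^* \circ f_* = \tilde f_* \circ q_Y^*$, whence $q^* \circ f_* \circ c_1(f^* L) = \tilde f_* \circ (i_Y)_* = (i \circ f)_* = i_* \circ f_* = q^* \circ c_1(L) \circ f_*$, and injectivity of $q^*$ yields (\ref{lemm:c1:proper}). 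When $f$ is flat, functoriality of flat pullback gives $q_Y^* \circ f^* = (q \circ \tilde f)^* = \tilde f^* \circ q^*$, and flat base change along the cartesian square containing $i$ gives $\tilde f^* \circ i_* = (i_Y)_* \circ f^*$; combining these shows that both $q_Y^* \circ f^* \circ c_1(L)$ and $q_Y^* \circ c_1(f^* L) \circ f^*$ equal $(i_Y)_* \circ f^*$, proving (\ref{lemm:c1:flat}).

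Finally, for (\ref{lemm:c1:conn}), set $U = X - Y$ and decompose $L$ into $L_U = q^{-1}(U)$ and $L_Y = q^{-1}(Y)$, with respective projections $q_U, q_Y$ and zero-sections $i_U, i_Y$ into which $i$ restricts. The statement follows once one knows that the boundary homomorphism commutes with flat pullback and with proper pushforward, that is, $q_Y^* \circ \delta = \delta' \circ q_U^*$ and $\delta' \circ (i_U)_* = (i_Y)_* \circ \delta$, where $\delta$ and $\delta'$ denote the connecting homomorphisms of the pairs $(X,Y)$ and $(L, L_Y)$. Chaining these after composing the desired identity with the injective $q_Y^*$ reduces both $q_Y^* \circ \delta \circ c_1(L|_U)$ and $q_Y^* \circ c_1(L|_Y) \circ \delta$ to the common value $(i_Y)_* \circ \delta$. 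Throughout, the routine part is the verification of the cartesian-square identities and the invocation of the compatibilities of \cite{Rost-Chow}; the only genuinely delicate point is (\ref{lemm:c1:conn}), where one must check that $\delta$ is compatible with $q^*$ and with $i_*$ with the correct signs, the second of these being an instance of proper pushforward commuting with the connecting map, in the spirit of \rref{lemm:connecting}.
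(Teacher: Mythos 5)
Your proof is correct and takes essentially the same route as the paper: the paper's one-line proof simply cites Rost's compatibility rules for the four basic maps (functoriality and base change for proper pushforward, the projection formula for units, flat pullback versus units, and the boundary compatibilities (4.4.1)--(4.4.2)), which are precisely the facts you invoke after unwinding the definition $c_1(L) = (q^*)^{-1} \circ i_*$ and composing each identity with the injective map $q^*$ (respectively $q_Y^*$). The only difference is that you make the formal reduction explicit, including the verification of the two cartesian squares, which the paper leaves to the reader.
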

\begin{proof}
This follows from \cite[(4.1), (4.2.1), (4.3.1), (4.4)]{Rost-Chow}.
\end{proof}

The following direct construction of the first Chern class will be used in \S\ref{sect:equiv}.
\begin{lemma}
\label{lemm:c1_merom}
Let $X$ be a variety and $\Lc$ an invertible $\Oc_X$-module. Let $U$ be a dense open subscheme of $X$ and $l \in H^0(U,\Lc)$ a nowhere vanishing section. Let $U_i$ be a covering of $X$ by open subschemes, and $l_i \in H^0(U_i,\Lc)$ nowhere vanishing sections. Let $s_i \in H^0(U \cap U_i, \Gm)$ be such that $l_i s_i = l$ on $U \cap U_i$. Then the cycle class $c_1(\Lc)[X] \in \CH(X)$ is represented by a cycle in $\Zo(X)$ whose restriction to each $U_i$ is the image of $[U\cap U_i]$ under the composite (we denote by $u_i \colon U \cap U_i \to U_i$ the open immersion)
\[
\Zo(U\cap U_i) \xrightarrow{\{s_i\}} C(U\cap U_i,K_1) \xrightarrow{(u_i)_*} C(U_i,K_1) \xrightarrow{d} \Zo(U_i).
\]
\end{lemma}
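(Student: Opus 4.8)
The plan is to identify the cycle assembled from the local formulas with the divisor of the rational section determined by $l$, and then to invoke the standard relation between the first Chern class and the divisor of a rational section; the concrete work is a chain-level computation, while the genuinely global input is this last relation.

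First I would compute the composite at the level of the cycle complex. The key observation is that $s_i$ is a unit on $U\cap U_i$, so the $K_1$-chain $\{s_i\}[U\cap U_i]$ is already a cycle: its differential on $U\cap U_i$ is $\operatorname{div}_{U\cap U_i}(s_i)=0$. Viewing it on $U_i$ via $(u_i)_*$ and applying $d$ therefore produces its boundary, supported on $U_i\setminus U$, and at a codimension-one point $x$ of $U_i$ the $x$-component of $d$ is the tame symbol $\operatorname{ord}_x(s_i)$. Hence the composite sends $[U\cap U_i]$ to $\sum_{x}\operatorname{ord}_x(s_i)[x]=\operatorname{div}_{U_i}(s_i)\in\Zo(U_i)$.

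Next I would recognise these local cycles as restrictions of a single global divisor. Regarding $l$ as a rational section of $\Lc$, regular and nowhere vanishing on the dense open $U$, the relation $l=s_il_i$ shows that in the chart $U_i$ the section $l$ corresponds to the rational function $s_i$; as $l_i$ is a unit, $\operatorname{ord}_x(l)=\operatorname{ord}_x(s_i)$ at every codimension-one point $x$ of $U_i$. Thus $\operatorname{div}_{U_i}(s_i)$ is the restriction to $U_i$ of the global cycle $\operatorname{div}(l)=\sum_{x\in X^{(1)}}\operatorname{ord}_x(l)\,[x]\in\Zo(X)$. (Equivalently, the local cycles glue because on $U_i\cap U_j$ the ratio $s_is_j^{-1}=l_jl_i^{-1}$ lies in $H^0(U_i\cap U_j,\Gm)$, so $\operatorname{div}(s_i)=\operatorname{div}(s_j)$ there.) This settles the assertion about the restriction to each $U_i$, and it remains to prove that $[\operatorname{div}(l)]=c_1(\Lc)[X]$ in $\CH(X)$. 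For this I would appeal to the standard relation, in Rost's cycle-module framework, between $c_1$ and the divisor of a rational section \cite{Rost-Chow}. Restricting to $U$, both sides vanish — the left by \rref{lemm:c1}\rref{lemm:c1:flat} together with the vanishing of $c_1$ of the trivial bundle $\Lc|_U$, the right because $\operatorname{div}(l)|_U=0$ as $l$ is nowhere vanishing on $U$ — so both classes are supported on $Z=X\setminus U$, and I would match them using the localisation sequence for $(X,Z)$ and the compatibility of $c_1$ with the connecting homomorphism from \rref{lemm:c1}\rref{lemm:c1:conn}. Unwinding the definition $c_1(\Lc)=(q^*)^{-1}\circ i_*$ through the complement of the zero section of the total space, this connecting map is precisely the tame symbol, i.e.\ it returns $\operatorname{div}(l)$.

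The hard part will be this last identity. Agreement of the two classes on the charts $U_i$ is immediate from \rref{lemm:c1}\rref{lemm:c1:flat}, but it does \emph{not} imply their equality in $\CH(X)$, since local rational triviality is not global; one genuinely needs the divisor interpretation of $c_1$. Making it precise requires care with Rost's conventions — the normalisation of the tame symbol, the $K$-degree shift in the localisation sequence, and the signs — and some attention to the fact that $X$ may be reducible and non-reduced, so that $[X]$ is the fundamental cycle and $\operatorname{div}(l)$ must be taken component by component.
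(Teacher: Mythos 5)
Your proposal is correct and takes essentially the same route as the paper's proof: you assemble the local outputs $\operatorname{div}_{U_i}(s_i)$ into the divisor of the rational section $l$ of $\Lc$ --- which is precisely the Weil divisor of the Cartier divisor $S=(U_i,s_i)$, with $\Oc(S)\simeq\Lc$, that the paper constructs after reducing to $X$ integral --- and then invoke the standard identification of $c_1(\Lc)[X]$ with this divisor class, which the paper takes from \cite[Example 3.3.2, Theorem 3.2(f)]{Ful-In-98}. The one caveat is that this identification really should be quoted from Fulton (it is not stated explicitly in \cite{Rost-Chow}), because your fallback localisation argument, as written, only shows that both classes lie in the image of $\CH(X-U)\to\CH(X)$; upgrading this to an equality requires the total-space/zero-section argument you only gesture at in the last step.
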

\begin{proof}
We may assume that $X$ is integral. Then for each $i$ such that $U_i \neq \varnothing$, we may view $s_i$ as an invertible rational function on $X$, and the family $(U_i,s_i)$ defines a Cartier divisor $S$ on $X$ such that $\Oc(S) \simeq \Lc$. By \cite[Example 3.3.2, Theorem 3.2(f)]{Ful-In-98}, the Weil divisor associated with $S$ is a cycle in $\Zo(X)$ satisfying the required conditions.
\end{proof}

\subsection{The sheaf \texorpdfstring{$\Gmn$}{Gm/n}}
The letter $n$ will denote an integer $\geq 1$.
\begin{definition}
\label{def:Gmn}
We denote by $\Gmn$ the cokernel of the $n$-th power map $\Gm \to \Gm$ in the category of Zariski sheaves.
\end{definition}

\begin{remark}
\label{rem:sheafification}
The sheaf $\Gmn$ is the Zariski sheafification of the presheaf $U \mapsto H^1_{fppf}(U,\mu_n)$.
\end{remark}

Let $X$ be a variety and $M$ a cycle module such that $n \cdot M=0$. Let $t \in H^0(X,\Gmn)$. The stalk of $t$ at a point $x \in X$ is an element of $(\Oc_{X,x})^\times/(\Oc_{X,x})^{\times n}$, and its image in $k(x)^\times/k(x)^{\times n}$ acts on the left $k(x)^\times$-module $M(k(x))$ (which is assumed to be $n$-torsion). Thus we obtain a group morphism
\[
\{t\} \colon C(X,M) \to C(X,M).
\]

Note that any point of $X$ is contained in an open subscheme $U$ such that (the restriction of) $t$ lifts to a section $t' \in H^0(U,\Gm)$. Then the endomorphisms $\{t\}$ and $\{t'\}$ of $C(U,M)$ coincide.

\begin{lemma}
\label{lemm:Gmn}
Let $X$ be a variety and $M$ a cycle module such that $n \cdot M=0$. Let $t \in H^0(X,\Gmn)$. Then (at the cycle level $C(-,M)$):
\begin{enumerate}[label=(\roman*),ref=\roman*]
\item \label{lemm:Gmn:Gm} For any $u\in H^0(X,\Gm)$, we have $\{u\} \circ \{t\} = - \{t\} \circ \{u\}$.
\item \label{lemm:Gmn:pushforward} If $f \colon Y \to X$ is a morphism, then $f_*\circ \{f^*t\} = \{t\} \circ f_*$.
\item \label{lemm:Gmn:flat} If $f \colon Y \to X$ is a flat morphism, then $f^* \circ \{t\} = \{f^*t\} \circ f^*$.
\item \label{lemm:Gmn:d} $d \circ \{t\} = -\{t\} \circ d$.
\end{enumerate}
\end{lemma}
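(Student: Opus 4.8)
The plan is to reduce each of the four identities to the corresponding statement for an \emph{honest} section of $\Gm$, for which they are all part of Rost's formalism. The key input is the observation recorded just before the statement: every point of $X$ lies in an open subscheme $U$ over which $t$ lifts to some $t' \in H^0(U,\Gm)$, and then $\{t\}$ and $\{t'\}$ agree as endomorphisms of $C(U,M)$. For a genuine unit one has: (i) the anticommutativity of the $K_1$-action, coming from the $K^M_*(k(x))$-module structure on each $M(k(x))$ together with anticommutativity in Milnor $K$-theory; (ii) the projection formula for the push-forward $f_*$; (iii) the compatibility of the unit action with flat pull-back; and (iv) the residue rule $\partial_v(\{a\}\cdot\rho) = -\{\bar{a}\}\cdot\partial_v(\rho)$ valid for a $v$-unit $a$, the sign reflecting $\{a\}\{\pi\} = -\{\pi\}\{a\}$ for a uniformiser $\pi$. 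All of these are in \cite{Rost-Chow}.

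The descent mechanism is as follows. Choose an open cover $\{U_i\}$ of $X$ over which $t$ lifts to sections $t'_i \in H^0(U_i,\Gm)$. Since $C(X,M) = \bigoplus_{x \in X} M(k(x))$ and every $x$ lies in some $U_i$, the family of open restrictions $j_i^*\colon C(X,M) \to C(U_i,M)$ --- each simply the projection onto the summands indexed by points of $U_i$ --- is jointly injective. Hence, to prove that two endomorphisms of $C(X,M)$ (or two maps out of $C(Y,M)$, in case (ii)) coincide, it suffices to check this after composing with each $j_i^*$. I would therefore verify, relation by relation, that both sides are compatible with the $j_i^*$ and that after restriction the relation becomes the honest-unit statement over $U_i$ for $t'_i$. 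The operator $\{t\}$ commutes with open restriction tautologically, since it acts diagonally through the stalks of $t$ and these agree with those of $t'_i$ on $U_i$; the same holds for $\{u\}$. With these in place, (i) and (iii) are immediate: applying $j_i^*$ turns, say, $\{u\}\{t\}+\{t\}\{u\}$ into $(\{u|_{U_i}\}\{t'_i\}+\{t'_i\}\{u|_{U_i}\})\,j_i^*$, whose parenthesis vanishes by the honest-unit case.

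For (ii), I would restrict $f$ over the cover, setting $f_i\colon f^{-1}(U_i) \to U_i$; the push-forward commutes with open restriction by base change along the resulting cartesian squares, so $j_i^*$ carries the desired identity to the projection formula $f_{i,*}\{f_i^* t'_i\} = \{t'_i\}\,f_{i,*}$ for the honest unit $t'_i$. I expect (iv) to be the only genuinely delicate point, and it deserves a word of caution: one might worry that restricting to $U$ loses information, because $d$ is not supported on $U$ --- it can send a cycle on $U$ to one meeting $X \setminus U$. The resolution is precisely that open restriction $j_i^*$ is flat pull-back, hence a \emph{chain map}, so $j_i^* \circ d = d \circ j_i^*$; consequently $j_i^*\bigl(d\{t\}+\{t\}d\bigr) = \bigl(d\{t'_i\}+\{t'_i\}d\bigr)\,j_i^*$, and the right-hand parenthesis vanishes by the residue computation quoted above. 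Joint injectivity of the $j_i^*$ then forces $d\{t\}+\{t\}d = 0$ on all of $C(X,M)$. Everything beyond identifying the correct honest-unit rules and this sign is bookkeeping.
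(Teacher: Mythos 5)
Your proof is correct and takes essentially the same route as the paper's: the paper likewise reduces to the case where $t$ lifts to $H^0(X,\Gm)$ by passing to an open cover, and then invokes the honest-unit identities (graded-commutativity of Milnor $K$-theory together with \cite[(4.2.1), (4.3.1), (4.6.3)]{Rost-Chow}). Your write-up simply makes explicit the joint-injectivity and chain-map bookkeeping that the paper compresses into the phrase ``each statement may be proved after replacing $X$ with an open cover.''
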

\begin{proof}
Each statement may be proved after replacing $X$ with an open cover, so that we may assume that $t$ lifts to $H^0(X,\Gm)$. Thus the statements follow respectively from the graded-commutativity of Milnor's $K$-theory, and \cite[(4.2.1), (4.3.1), (4.6.3)]{Rost-Chow}.
\end{proof}

\subsection{Gradings}
We now state our terminology concerning graded algebras and modules. These notions extend to sheaves of algebras and modules over them.

Let $H$ be an (ordinary) abelian group and $A$ a commutative ring with unity. An $H$-grading on a commutative $A$-algebra $R$ is a decomposition of the $A$-module $R=\bigoplus_{h \in H} R_h$ such that $R_h \cdot R_i \subset R_{h+i}$ for all $h,i \in H$. An $H$-grading on an $R$-module $M$ is a decomposition of the $A$-module $M=\bigoplus_{h \in H} M_h$ such that $R_h \cdot M_i \subset M_{h+i}$ for all $h,i \in H$. Elements of $\bigcup_{h \in H} M_h$ are called homogeneous. A morphism of $H$-graded modules is a morphism compatible with the gradings. 

A graded submodule of an $H$-graded module $M$ is a submodule $N \subset M$ which is generated by homogeneous elements. It is equivalent to require that for every $n \in N$ and $h\in H$ the component of $n$ in $M_h$ belong to $N$. In this case, the modules $N$ and $M/N$ inherit $H$-gradings such that $N_h = N \cap M_h$ and $(M/N)_h = M_h /N_h$.

A graded ideal of an $H$-graded $A$-algebra $R$ is a graded submodule of $R$. If $I,J$ are graded ideals, then the ideal $IJ$ is graded.

\section{Actions of finite diagonalisable groups on varieties}
\label{sect:actions}

\subsection{Quotients by finite algebraic groups}
\label{sect:quotients}
We gather some results on finite algebraic group actions which will repeatedly be used in the paper without explicit reference.

\begin{proposition}
\label{prop:open_action}
Let $G$ be a finite algebraic group acting on a variety $X$.
\begin{enumerate}[label=(\roman*),ref=\roman*]
\item \label{prop:open_action:cover} The variety $X$ is covered by affine $G$-invariant open subschemes.

\item \label{prop:open_action:sat}
Any open subscheme $U$ of $X$ contains a unique maximal $G$-invariant open subscheme $U'$. In addition:
\begin{enumerate}[label=(\alph*),ref=\alph*]
\item \label{prop:open_action:sat:closed} If $U$ is closed in $X$, the same is true for $U'$.
\item \label{prop:open_action:sat:inv} Any $G$-invariant closed subscheme of $X$ contained in $U$ is contained in $U'$.
\end{enumerate}
\end{enumerate}
\end{proposition}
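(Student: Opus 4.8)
The plan is to prove the two parts by different means: part~(\ref{prop:open_action:sat}) is essentially formal, while part~(\ref{prop:open_action:cover}) requires producing an \emph{affine} invariant open, which is the real difficulty. Throughout I would use that the second projection $p \colon G \times X \to X$ is both closed and open: it is obtained from $G \to \Spec k$ by base change along $X \to \Spec k$, and since $G \to \Spec k$ is finite and flat, $p$ is finite (hence closed) and flat of finite presentation (hence open). I write $\sigma \colon G \times X \to X$ for the action morphism.

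For part~(\ref{prop:open_action:sat}), I would set $Z = X \setminus U$, a closed subscheme, and define $U'$ to be the open complement of $p(\sigma^{-1}(Z))$; this makes sense because $\sigma^{-1}(Z)$ is closed and $p$ is closed. Set-theoretically $p(\sigma^{-1}(Z))$ is the locus of points whose $G$-orbit meets $Z$, so $U'$ consists of the points whose entire orbit lies in $U$. From this description $U' \subseteq U$ is immediate, and property~(\ref{prop:open_action:sat:inv}) follows at once: a $G$-invariant closed subscheme $T \subseteq U$ has all its orbits inside $T \subseteq U$, whence $T \subseteq U'$. The $G$-invariance of $U'$ I would deduce from associativity of the action, which makes ``the orbit meets $Z$'' an orbit-invariant condition; concretely one compares the two morphisms $G \times G \times X \to X$ obtained from $\sigma$ and from the multiplication of $G$. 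Finally, for~(\ref{prop:open_action:sat:closed}), if $U$ is also closed then $Z$ is open, so $\sigma^{-1}(Z)$ is open and, $p$ being open as well, $p(\sigma^{-1}(Z))$ is open; hence $U'$ is closed.

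For part~(\ref{prop:open_action:cover}), I would fix a point $x$ and consider its orbit $O$, the image of $G \cong G \times \{x\} \xrightarrow{\sigma} X$, which is finite because the scheme $G$ is finite. The aim is an affine $G$-invariant open containing $O$. Since open subschemes of affine schemes need not be affine, merely saturating an affine open as in~(\ref{prop:open_action:sat}) does not suffice, so here I would bring in an ample invertible sheaf $\Lc$, available because $X$ is quasi-projective. The key device is to replace $\Lc$ by a $G$-linearised ample sheaf $\Nc$, obtained as the norm of $\Lc$ along the finite flat projection $p$ (heuristically $\Nc \simeq \bigotimes_{g} g^* \Lc$), ampleness being preserved as each $g^*\Lc$ is ample. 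Then, for $m$ large, I would pick a section of $\Nc^{\otimes m}$ not vanishing on the finite set $O$ and form its norm $s$, a $G$-invariant section of a power of $\Nc$ that still does not vanish on the $G$-stable set $O$. Its non-vanishing locus is affine (the non-vanishing locus of a section of an ample sheaf is affine), is $G$-invariant because $s$ is invariant, and contains $x$; letting $x$ vary yields the desired cover.

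The hard part is exactly the affineness in~(\ref{prop:open_action:cover}): obtaining a $G$-invariant open is cheap, but turning it into an affine one forces the passage through a $G$-linearised ample sheaf together with the multiplicative averaging (norm) of a section. This is the single place where the finite flatness of $G$ over $k$ is used in an essential way; everything in~(\ref{prop:open_action:sat}) is a formal manipulation of the morphisms $\sigma$ and $p$.
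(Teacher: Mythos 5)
Your treatment of part~\dref{prop:open_action}{prop:open_action:sat} is, up to relabelling, exactly the paper's proof: the paper sets $S=a(p^{-1}F)$ with $F=X-U$ and $a$ the action morphism, and your set $p(\sigma^{-1}(Z))$ coincides with $S$ because the automorphism $(g,x)\mapsto(g^{-1},g\cdot x)$ of $G\times X$ interchanges $p$ and $\sigma$; your invariance-via-associativity argument and your open/closed image arguments are the ones in the paper. The real divergence is in part~\dref{prop:open_action}{prop:open_action:cover}, which the paper disposes of by citing \cite[V, \S 5]{SGA3-1}, remarking only that quasi-projectivity guarantees that any finite subset lies in an affine open subscheme. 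Your norm construction is the standard way to make such a citation explicit, and it is the right general strategy (translates $g^*\Lc$ do not exist for non-constant $G$, e.g.\ for $\mup$ in characteristic $p$, so a norm along $p$ is indeed forced). But as written it has a genuine gap.

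The gap is the final claim that ``the non-vanishing locus of a section of an ample sheaf is affine.'' This is false. Take $X=\mathbb{A}^2-\{0\}$: it is quasi-affine, so $\Oc_X$ is ample, and $H^0(X,\Oc_X)=k[x,y]$; the non-vanishing locus of the section $1+x$ is $D(1+x)-\{0\}$, which is not affine. Ampleness of $\Lc$ only guarantees that the \emph{affine} loci $X_u$ form a basis of the topology; a general $X_u$ is merely quasi-affine. Consequently, even granting that the norm sheaf $N_p(\sigma^*\Lc)$ is ample (which itself needs the norm theory of \cite[\S 6.5--6.6]{ega-2} rather than the constant-group heuristic $\bigotimes_g g^*\Lc$), the non-vanishing locus of your invariant norm section need not be affine, and the argument collapses at its last step.

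The repair uses precisely the input the paper points to. Choose a projective closure $X\subset\overline{X}\subset\Pp^N$ and a hypersurface $H$ containing $\overline{X}-X$ and missing the finite orbit $O$ of the chosen point $x$; let $u$ be the equation of $H$ restricted to $X$, a section of the ample sheaf $\Oc(d)|_X$ whose non-vanishing locus $X_u=\overline{X}-H$ is affine (closed in the affine scheme $\Pp^N-H$) and contains $O$. Now set $s=N_p(\sigma^*u)$. Since the norm of a section is invertible at a point exactly when the section is invertible on the whole fibre of $p$ over that point, the locus $X_s$ is precisely the saturation $X-p(\sigma^{-1}(X-X_u))$ of $X_u$ furnished by part~\dref{prop:open_action}{prop:open_action:sat}: it is $G$-invariant, contains $x$, and is contained in $X_u$. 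Affineness of $X_s$ then comes not from ampleness but from this containment: $X_s$ is the non-vanishing locus of a section of a line bundle on the \emph{affine} scheme $X_u$ (locally a principal open $D(f)$, so the inclusion $X_s\to X_u$ is an affine morphism), hence affine. With this fix your argument is correct, and in fact neither the ampleness of the norm sheaf nor any $G$-linearisation is needed.
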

\begin{proof}
\eqref{prop:open_action:cover}: See \cite[V, \S5]{SGA3-1}, the main point is that our varieties are quasi-projective, so that any finite subset of $X$ is contained in an affine open subscheme. 

\eqref{prop:open_action:sat}: Denote by $p \colon G\times X \to X$ the second projection, and by $a \colon G \times X \to X$ the action morphism. Consider the automorphism $\varepsilon \colon G \times X \to G \times X$ given by $(g,x) \mapsto (g^{-1},g \cdot x)$.  Since $p$ is open (being flat of finite type) and finite, the same is true for $a = p \circ \varepsilon$. Consider the sets $F = X-U$ and $S=a(p^{-1}F)$. Then $F=a(1_G \times F) \subset a(p^{-1}F) = S$. Denoting by $\mu_G\colon G \times G \to G$ the group operation, we have
\[
a (p^{-1}S) = a \circ (\id_G \times a) (G \times G \times F) = a \circ (\mu_G \times \id_F)(G \times G \times F) = a(G \times F) = S.
\]
This proves that $U'=X-S$ is $G$-invariant. If $F$ is open in $X$, then so is $S$ (because $a$ is open), proving \eqref{prop:open_action:sat:closed}. If $Z$ is a $G$-invariant open (resp.\ closed) subscheme of $X$ contained in $U$, then $Z \cap F =\varnothing$, hence $a^{-1}Z=p^{-1}Z$ does not meet $p^{-1}F$ in $G \times X$, so that $S \cap Z = \varnothing$, and thus $Z \subset U'$. The maximality and unicity of $U'$ follow (resp.\ \eqref{prop:open_action:sat:inv} follows).
\end{proof}

\begin{proposition}
\label{prop:quotient}
Let $G$ be a finite algebraic group acting on a variety $X$. Then there is a categorical quotient $\varphi=\varphi_X \colon X \to X/G$ in the category of $k$-varieties. In addition:

\begin{enumerate}[label=(\roman*),ref=\roman*]
\item \label{prop:quotient:finite-surj} The morphism $\varphi$ is finite and surjective.

\item \label{prop:quotient:open} If $U$ is a $G$-invariant open subscheme of $X$, then $U/G$ is an open subscheme of $X/G$, and $U=\varphi_X^{-1}(U/G)$.
\end{enumerate}
\end{proposition}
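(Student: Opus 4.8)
The plan is to build $X/G$ locally as a spectrum of invariants and then glue. First I reduce to the affine case: by \rref{prop:open_action}\,\eqref{prop:open_action:cover} the variety $X$ is covered by affine $G$-invariant open subschemes $X_i = \Spec A_i$. Writing $R = H^0(G,\Oc_G)$ for the finite-dimensional Hopf algebra of $G$, the action is encoded by a coaction $\rho_i \colon A_i \to A_i \otimes_k R$, and I set $A_i^G = \{a \in A_i : \rho_i(a) = a \otimes 1\}$ and $X_i/G = \Spec A_i^G$. The universal property of the ring of invariants makes $\Spec A_i^G$ the categorical quotient of $X_i$: a morphism $X_i \to \Spec B$ is $G$-invariant precisely when the corresponding ring map $B \to A_i$ takes values in $A_i^G$, and then factors uniquely through $A_i^G$.

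The key algebraic input, and the main obstacle, is finiteness: I claim $A_i$ is a finite $A_i^G$-module. This rests on the fact that every $a \in A_i$ satisfies a monic polynomial of degree $|G| = \dim_k R$ with coefficients in $A_i^G$, namely the characteristic polynomial of multiplication by $\rho_i(a)$ on the rank-$|G|$ free $A_i$-module $A_i \otimes_k R$, whose coefficients are invariant (by coassociativity) and which is killed by $a$ upon applying the counit $\id \otimes \epsilon$. This is exactly where finiteness of $G$ enters. Since $A_i$ is a finitely generated $k$-algebra, integrality upgrades to module-finiteness. Consequently $\varphi_i \colon X_i \to X_i/G$ is finite; it is surjective because $A_i^G \hookrightarrow A_i$ is an integral injection, so $\Spec A_i \to \Spec A_i^G$ lies over every prime.

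Next I establish the affine case of \eqref{prop:quotient:open}, which is what makes the gluing work. For $f \in A_i^G$ one has $(A_i)_f^G = (A_i^G)_f$, so the $G$-invariant basic open $D(f) \subset X_i$ has quotient $D(f) \cap (X_i/G)$ and satisfies $D(f) = \varphi_i^{-1}(D(f))$. For a general $G$-invariant open $U \subset X_i$ with closed complement $Z$, the heart of the matter is the saturation identity $\varphi_i^{-1}(\varphi_i(Z)) = Z$: since $\varphi_i$ is finite it is closed, so $\varphi_i(Z)$ is closed, and the identity exhibits $U$ as $\varphi_i^{-1}\bigl((X_i/G) \setminus \varphi_i(Z)\bigr)$. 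To prove the saturation one shows that invariant functions separate a point from a disjoint invariant closed set; concretely, for $a \in A_i$ vanishing on $Z$ one forms its norm (the constant term, up to sign, of the characteristic polynomial above), which lies in $A_i^G$, still vanishes on the invariant set $Z$, and is non-vanishing on any orbit disjoint from $Z$. This separation—equivalently, that the fibres of $\varphi_i$ are exactly the $G$-orbits—is the point I expect to require the most care, especially in ensuring it holds scheme-theoretically and in positive characteristic where $G$ may be infinitesimal.

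Finally I glue. On the overlaps $X_{ij} = X_i \cap X_j$, which are $G$-invariant open in both $X_i$ and $X_j$, the affine case just proved identifies $X_{ij}/G$ with an open subscheme of both $X_i/G$ and $X_j/G$; the cocycle condition on triple overlaps follows from the uniqueness in the categorical quotient property, so the $X_i/G$ glue to a $k$-scheme $X/G$ equipped with a finite surjective morphism $\varphi$, proving \eqref{prop:quotient:finite-surj}. The glued $\varphi$ is again a categorical quotient, since a $G$-invariant morphism $X \to Y$ restricts to morphisms $X_i \to Y$ that factor uniquely and compatibly through the $X_i/G$. It remains to check that $X/G$ is a variety, i.e.\ quasi-projective: this follows by descending a $G$-invariant very ample line bundle along $\varphi$, whose existence uses that every orbit lies in an affine open, as in \rref{prop:open_action}. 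Statement \eqref{prop:quotient:open} in general is then the gluing of its affine instances, yielding $U/G$ open in $X/G$ with $U = \varphi_X^{-1}(U/G)$.
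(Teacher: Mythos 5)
Your construction is, in outline, the standard one that underlies the reference the paper itself uses: the paper's proof of this proposition is essentially a citation (existence and (i) from \cite[V, \S5]{SGA3-1}, assertion (ii) from \cite[V, Lemme 1.1]{SGA3-1}), whereas you reconstruct that argument. The affine core of your proposal is essentially sound: invariants $A_i^G$ via the coaction, integrality through the characteristic polynomial of multiplication by $\rho_i(a)$ on $A_i\otimes_k R$ (Cayley--Hamilton plus the counit axiom, invariance of the coefficients via the Hopf structure), Artin--Tate to upgrade to module-finiteness and to finite generation of $A_i^G$, lying-over for surjectivity, the norm trick for saturation, and $(A_f)^G=(A^G)_f$ for the sheaf statement. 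Two caveats there are minor and repairable with what you already have: fibres of $\varphi_i$ coincide with orbits only \emph{set-theoretically} (scheme-theoretic fibres are not orbits in general, e.g.\ over branch points), which is all that saturation needs; and your universal property argument only treats affine targets, so the categorical quotient property against an arbitrary quasi-projective $Y$ requires the extra routine step of covering $Y$ by affines and using saturation together with $(\varphi_*\Oc_X)^G=\Oc_{X/G}$.

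The genuine gap is the final step, and it matters because the proposition asserts the quotient exists \emph{in the category of $k$-varieties}, i.e.\ quasi-projective schemes: your justification that $X/G$ is quasi-projective --- ``descending a $G$-invariant very ample line bundle along $\varphi$'' --- does not work as stated. The mechanism presupposed there is missing: $\varphi$ is not flat unless the action is free, so there is no norm along $\varphi$; and for a non-constant $G$ --- in particular the infinitesimal groups such as $\mup$ in characteristic $p$ that this paper explicitly needs --- there is no averaging $\bigotimes_{g\in G} g^*L$ with which to manufacture a $G$-linearised ample bundle, and Kempf-style descent of linearised bundles requires control of stabiliser actions on fibres. Nor can finiteness and surjectivity of $\varphi$ alone save you: quasi-projectivity does not descend along finite surjective morphisms in general (a complete non-projective surface is necessarily non-normal, and its normalisation is a projective surface mapping onto it finitely and surjectively). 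A correct treatment either invokes \cite[V, \S5]{SGA3-1} as the paper does, or proves quasi-projectivity directly, e.g.\ by the SGA3-style norm construction (invariant hypersurface complements giving an immersion of $X/G$ into projective space), or by d\'evissage: the quotient by the infinitesimal part $G^0$ is squeezed between iterated relative Frobenii, hence is finite over a quasi-projective scheme; the \'etale part becomes constant after a finite separable extension $k'/k$, where the classical averaging argument applies, and quasi-projectivity then descends along the finite \emph{flat} morphism $X_{k'}\to X$.
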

\begin{proof}
See \cite[V, \S5]{SGA3-1} for the existence of the quotient, and for \eqref{prop:quotient:finite-surj}. Assertion \eqref{prop:quotient:open} follows from \cite[V, Lemme 1.1]{SGA3-1}.
\end{proof}

When $f\colon Y \to X$ is a $G$-equivariant morphism, we will denote by $f/G \colon Y/G \to X/G$ the induced morphism.

Let $S' \to S$ be a morphism of varieties with a trivial $G$-action, and $X \to S$ a $G$-equivariant morphism. Categorical considerations show that the varieties $(X \times_S S')/G$ and $(X/G) \times_S S'$ are naturally isomorphic.

\subsection{Finite diagonalisable groups}(see \cite[\S2.2]{Waterhouse} or \cite[I, \S4.4]{SGA3-1})
Let $\Gamma$ be a finite abelian (ordinary) group. The functor associating to each commutative $k$-algebra $R$ the group of group morphisms $\Gamma \to R^{\times}$ is represented by a finite commutative algebraic group $\Diag(\Gamma)$. Algebraic groups of this type are called \emph{finite diagonalisable}. The coordinate ring of $\Diag(\Gamma)$ is the group algebra $k[\Gamma]$ over $k$, defined as the $k$-vector space on the basis $e_g$ for $g\in \Gamma$, with unit $e_0$, multiplication given by $e_g \cdot e_h = e_{g+h}$, comultiplication by $e_g \mapsto e_g \otimes e_g$, coinverse by $e_g \mapsto e_{-g}$ and counit by $e_g \mapsto 1$.

We denote by $\Char{G}$ the character group of an algebraic group $G$, defined as the (ordinary) abelian group of group-like elements in the Hopf algebra $H^0(G,\Oc_G)$. The associations $G\mapsto \Char{G}$ and $\Gamma \mapsto \Diag(\Gamma)$ induce an anti-equivalence between the category of finite abelian groups and the category of finite diagonalisable algebraic groups. For every integer $n\geq 1$, we let $\mun=\Diag(\Zz/n)$ be the finite diagonalisable group such that $\Char{\mun}=\Zz/n$.

In the affine case, actions of finite diagonalisable groups correspond to gradings of the coordinate ring:
\begin{lemma}[{\cite[I, 4.7.3.1]{SGA3-1}}]
\label{lemm:action_grading}
Let $\psi \colon X \to S$ be an affine morphism of varieties and $G$ a finite diagonalisable group acting trivially on $S$. The datum of a $G$-action on $X$ such that $\psi$ is $G$-equivariant is equivalent to that of a $\Char{G}$-grading on the $\Oc_S$-algebra $\psi_*\Oc_X$. 
\end{lemma}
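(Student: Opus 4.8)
The plan is to reduce the statement to a purely Hopf-algebraic fact about comodules over the group algebra $k[\Gamma]$, where $\Gamma = \Char{G}$. Since $\psi$ is affine, I would identify $X$ with the relative spectrum over $S$ of the quasi-coherent $\Oc_S$-algebra $A = \psi_* \Oc_X$. Both the datum of a $G$-action making $\psi$ equivariant and the datum of a $\Char{G}$-grading on $A$ are local on $S$ and compatible with restriction to affine opens, so I may assume $S$ affine and work with $A$ as an ordinary $\Oc_S(S)$-algebra, gluing the resulting constructions at the end. Under the anti-equivalence between affine $S$-schemes and quasi-coherent $\Oc_S$-algebras, and using that $G \times_k X$ is the relative spectrum of $k[\Gamma] \otimes_k A$ (as $G = \Spec k[\Gamma]$ is defined over $k$ and acts trivially on $S$), a $G$-action on $X$ over $S$ corresponds to a coaction, that is, an $\Oc_S$-algebra morphism $\rho \colon A \to k[\Gamma] \otimes_k A$ satisfying the counit and coassociativity axioms dual to the action axioms.

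The heart of the argument exploits that $k[\Gamma]$ admits the $k$-basis of group-like elements $\{e_g\}_{g \in \Gamma}$ with $\Delta(e_g) = e_g \otimes e_g$ and $\epsilon(e_g) = 1$. I would write $\rho(a) = \sum_{g \in \Gamma} e_g \otimes \pi_g(a)$, thereby defining $\Oc_S$-linear endomorphisms $\pi_g$ of $A$. The counit axiom $(\epsilon \otimes \id)\rho = \id$ translates into $\sum_g \pi_g = \id$, while coassociativity $(\Delta \otimes \id)\rho = (\id \otimes \rho)\rho$ translates, upon comparing coefficients of $e_g \otimes e_h$, into $\pi_h \circ \pi_g = \delta_{g,h}\,\pi_g$. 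Thus the $\pi_g$ form a complete system of orthogonal idempotents, yielding a decomposition of $\Oc_S$-modules $A = \bigoplus_{g \in \Gamma} A_g$ with $A_g = \im \pi_g = \{a : \rho(a) = e_g \otimes a\}$. Finally, since $\rho$ is a ring homomorphism and $e_g e_h = e_{g+h}$, for $a \in A_g$ and $b \in A_h$ one gets $\rho(ab) = e_{g+h} \otimes ab$, so $A_g \cdot A_h \subset A_{g+h}$; as $\rho(1) = e_0 \otimes 1$ also $1 \in A_0$. This is exactly a $\Char{G}$-grading of the $\Oc_S$-algebra $A$.

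For the converse, given a $\Char{G}$-grading $A = \bigoplus_g A_g$ I would define $\rho$ by $\rho(a) = \sum_g e_g \otimes a_g$ on the homogeneous decomposition $a = \sum_g a_g$; the grading axioms make $\rho$ an algebra coaction, hence a $G$-action. These two constructions are visibly mutually inverse. The $\Oc_S$-linearity throughout, equivalently the fact that $\Oc_S$ lands in the weight-zero part $A_0$, is forced by the hypothesis that $G$ acts trivially on $S$ (so that $\rho$ restricts to $a \mapsto e_0 \otimes a$ on the image of $\Oc_S$). I would emphasise that no assumption on $k$ is needed: because $\Diag(\Gamma)$ is split by construction, its coordinate ring always carries the group-like basis $\{e_g\}$, which is precisely what makes the weight decomposition available over an arbitrary field.

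The main obstacle is not the algebra, which is routine, but the careful bookkeeping of the geometric-to-Hopf-algebraic dictionary: identifying $G \times_k X$ with the relative spectrum of $k[\Gamma]\otimes_k A$, checking that the associativity and unit axioms of the action correspond precisely to coassociativity and the counit axiom for $\rho$, and verifying that the grading constructed locally on affine opens of $S$ glues to a global $\Char{G}$-grading of the quasi-coherent sheaf $\psi_*\Oc_X$. Since all constructions are canonical (expressed through the idempotents $\pi_g$), the gluing is automatic once the local statement is established.
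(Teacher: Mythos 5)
Your proposal is correct, and it is essentially the argument behind the paper's proof: the paper offers no independent proof of this lemma, citing SGA3, I, 4.7.3.1, and your comodule--grading dictionary (group-like basis $\{e_g\}$, orthogonal idempotents $\pi_g$ from coassociativity and the counit, multiplicativity of the coaction giving $A_g \cdot A_h \subset A_{g+h}$) is precisely the standard argument that citation refers to. Indeed the paper implicitly uses this very dictionary later, when it describes the action morphism as $\sum_{g\in\Char{G}} \pi_g \otimes e_g$ in the proof of its Proposition on $\Oc_{X/G} \simeq (\varphi_*\Oc_X)_0$.
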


\begin{proposition}
\label{prop:quotient_graded}
Let $G$ be a finite diagonalisable group acting on a variety $X$. Then the morphism $\Oc_{X/G} \to \varphi_* \Oc_X$ induces an isomorphism $\Oc_{X/G} \simeq (\varphi_* \Oc_X)_0$.
\end{proposition}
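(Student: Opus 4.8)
The plan is to reduce the statement to a local, affine computation, in which the action becomes a grading of the coordinate ring and the quotient becomes the spectrum of the degree-zero part.

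First note that $\varphi$ is finite by \rref{prop:quotient}, hence affine, and that $G$ acts trivially on $X/G$ because $\varphi$ is $G$-invariant by construction of the categorical quotient. Thus \rref{lemm:action_grading}, applied to $\varphi$ with base $X/G$, equips the $\Oc_{X/G}$-algebra $\varphi_* \Oc_X$ with a $\Char{G}$-grading, and the canonical morphism $\Oc_{X/G} \to \varphi_* \Oc_X$ is a morphism of $\Oc_{X/G}$-algebras. What must be shown is that it is an isomorphism onto the degree-zero component $(\varphi_* \Oc_X)_0$. This is a local question on $X/G$, so using \rref{prop:open_action} to cover $X$ by affine $G$-invariant open subschemes, and \rref{prop:quotient} to see that their quotients form an open cover of $X/G$ over which $\varphi$ restricts to the corresponding categorical quotient, I may assume $X = \Spec A$ and $X/G = \Spec B$, with $\varphi$ corresponding to a ring homomorphism $\varphi^\sharp \colon B \to A$.

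By \rref{lemm:action_grading} the $G$-action corresponds to a $\Char{G}$-grading $A = \bigoplus_h A_h$, encoded by the coaction $\rho \colon A \to A \otimes_k k[\Char{G}]$ sending a homogeneous element $a \in A_h$ to $a \otimes e_h$; since this coaction is the same whether the base is $\Spec k$ or $X/G$, the component $(\varphi_* \Oc_X)_0$ is identified with $A_0$. The key point is the identity $A^G = A_0$ between the ring of invariants and the degree-zero component: an element $a$ is $G$-invariant exactly when $\rho(a) = a \otimes e_0$, which by definition of the grading means $a \in A_0$. This is the step that genuinely uses the diagonalisability of $G$.

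It remains to identify $B$ with $A_0$. By the affine construction of the quotient underlying \rref{prop:quotient} one has $B = \Oc_{X/G}(X/G) = A^G$, and $\varphi^\sharp$ is the inclusion of invariants; combined with $A^G = A_0$ this exhibits $\varphi^\sharp$ as the inclusion $A_0 \hookrightarrow A$, which is injective with image the degree-zero component, as required. I expect the main obstacle to be precisely this identification $A^G = A_0$, together with the verification that affine-locally the categorical quotient is the spectrum of the invariant subring; should one prefer not to invoke the latter from the construction behind \rref{prop:quotient}, one can instead check directly that $\Spec A_0 = \Spec A^G$ satisfies the universal property of the categorical quotient and conclude by its uniqueness.
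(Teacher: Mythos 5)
Your proof is correct and takes essentially the same route as the paper: both rest on SGA3's identification of $\Oc_{X/G}$ with the sheaf of invariants (the equaliser of the coaction and $a \mapsto a \otimes e_0$), followed by the observation that under the grading/coaction dictionary this equaliser is exactly the degree-zero component. The only difference is presentational --- you localise to the affine case and argue with $A^G = A_0$, while the paper phrases the same equaliser computation sheaf-theoretically on $X/G$, citing SGA3 V, Th\'eor\`eme 4.1~(i) directly.
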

\begin{proof}
By {\cite[I, \S4.7.3]{SGA3-1}}, the action morphism $G \times X \to X$ corresponds to the morphism of $\Oc_{X/G}$-algebras $\varphi_*\Oc_X \to \varphi_*\Oc_X \otimes_k k[\Char{G}]$ given by $\sum_{g\in\Char{G}} \pi_g \otimes e_g$, where $\pi_g \colon \varphi_*\Oc_X \to \varphi_*\Oc_X$ is the projection onto the $g$-th component. Thus \cite[V, 1 b) and Th\'eor\`eme 4.1 (i)]{SGA3-1} implies that $\Oc_{X/G} \to \varphi_*\Oc_X$ is the equaliser in the category of sheaves of rings on $X/G$ of the two morphisms $\varphi_*\Oc_X \to \varphi_*\Oc_X \otimes_k k[\Char{G}]$ given by $\id \otimes e_0$ and $\sum_{g \in \Char{G}} \pi_g \otimes e_g$. The statement follows.
\end{proof}

\begin{lemma}
\label{lemm:quotient-map}
Let $G$ be a finite diagonalisable group and $f\colon Y \to X$ a $G$-equivariant morphism. If $f$ is respectively
\begin{enumerate}[label=(\roman*),ref=\roman*]
\item \label{lemm:quotient-map:proper} proper,
\item \label{lemm:quotient-map:closed} a closed immersion,
\end{enumerate}
then the same is true for $f/G$.
\end{lemma}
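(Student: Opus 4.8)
The plan is to treat the two statements by different methods: a cancellation argument for properness, and the explicit description of quotients in terms of graded rings for closed immersions. Throughout I record the commutative square relating $f$ to $f/G$ through the quotient maps, namely $\varphi_X \circ f = (f/G)\circ \varphi_Y$ where $\varphi_Y\colon Y \to Y/G$ and $\varphi_X\colon X \to X/G$, and I use freely that by \autoref{prop:quotient} both $\varphi_X$ and $\varphi_Y$ are finite and surjective. Since all our varieties are quasi-projective over $k$, every morphism between them --- in particular $f/G$ --- is automatically separated and of finite type; so the content of \eqref{lemm:quotient-map:proper} is only that $f/G$ is universally closed.

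For \eqref{lemm:quotient-map:proper} I would first note that $\varphi_X$ is finite, hence proper, so the composite $\varphi_X \circ f = (f/G)\circ\varphi_Y$ is a composite of proper morphisms, hence proper, and in particular universally closed. It then remains to cancel $\varphi_Y$ on the right. The key point is the elementary fact that if a composite $g \circ h$ is universally closed and $h$ is surjective, then $g$ is universally closed: for any base change $T \to X/G$, surjectivity of $h=\varphi_Y$ is preserved, and for a closed subset $Z$ of the base change of $Y/G$ one has, writing $g_T, h_T$ for the base-changed maps, $g_T(Z) = g_T\bigl(h_T(h_T^{-1}(Z))\bigr) = (g\circ h)_T(h_T^{-1}(Z))$ because $h_T$ is surjective, so $g_T(Z)$ is closed as $(g\circ h)_T$ is closed. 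Applying this with $g=f/G$ and $h=\varphi_Y$ shows $f/G$ is universally closed, hence proper.

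For \eqref{lemm:quotient-map:closed} I would argue locally on $X/G$ using gradings. By \autoref{prop:open_action} the variety $X$ is covered by affine $G$-invariant open subschemes $U$; since $\varphi_X$ is finite and surjective, the $U/G$ are affine open subschemes covering $X/G$ with $U=\varphi_X^{-1}(U/G)$ by \autoref{prop:quotient}. As being a closed immersion is local on the target, and --- using that $\varphi_Y$ is surjective --- the restriction of $f/G$ over $U/G$ is identified with the quotient $(f^{-1}U)/G \to U/G$ of the restricted equivariant closed immersion $f^{-1}U \hookrightarrow U$, this reduces the problem to $X=\Spec A$ affine. By \autoref{lemm:action_grading} the ring $A$ carries a $\Char{G}$-grading; the $G$-invariant closed subscheme $Y\hookrightarrow X$ corresponds to a graded ideal $I\subset A$, so $Y=\Spec A/I$ with the induced grading. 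By \autoref{prop:quotient_graded} the quotients are $X/G=\Spec A_0$ and $Y/G=\Spec (A/I)_0 = \Spec A_0/I_0$ with $I_0 = I \cap A_0$, and $f/G$ is induced by the surjection $A_0 \to A_0/I_0$; hence $f/G$ is a closed immersion.

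The step I expect to require the most care is the cancellation in \eqref{lemm:quotient-map:proper}: one must phrase everything universally, using that surjectivity of $\varphi_Y$ is stable under base change so that the set-theoretic image computation survives after any base change, and then invoke the automatic separatedness and finite type of $f/G$ to upgrade universal closedness to properness. In \eqref{lemm:quotient-map:closed} the only point needing attention is that a $G$-invariant closed subscheme of an affine $G$-variety corresponds to a graded ideal; this follows from \autoref{lemm:action_grading}, since the defining surjection $A \to A/I$ is then a morphism of $\Char{G}$-graded $k$-algebras, which forces $I$ to be homogeneous.
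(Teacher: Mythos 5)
Your proposal is correct and follows essentially the same route as the paper: for (i) the paper also writes $\varphi_X\circ f=(f/G)\circ\varphi_Y$ and cancels the surjective $\varphi_Y$ (citing EGA II (5.4.2)--(5.4.3) where you prove the cancellation by hand), and for (ii) it likewise reduces to a surjective morphism of $\Char{G}$-graded $k$-algebras $A\to B$ and observes that $A_0\to B_0$ is surjective. Your extra details (the base-change argument for universal closedness and the identification $(f/G)^{-1}(U/G)=(f^{-1}U)/G$) are sound elaborations of steps the paper leaves implicit.
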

\begin{proof}
\eqref{lemm:quotient-map:proper}: The morphism $f/G$ is proper because $\varphi_X$ is proper and $\varphi_Y$ is surjective \cite[(5.4.2.ii) and (5.4.3.ii)]{ega-2}.

\eqref{lemm:quotient-map:closed}: We may assume that $f$ is given by a surjective morphism of $\Char{G}$-graded $k$-algebras $A \to B$. Then $A_0 \to B_0$ is surjective.
\end{proof}

\subsection{Equivariant modules}
Let $G$ be a finite diagonalisable group acting on a variety $X$. A \emph{$G$-equivariant structure} on an $\Oc_X$-module $\Fc$ is a $\Char{G}$-grading on the $\varphi_*\Oc_X$-module $\varphi_*\Fc$. A $G$-representation over $k$ (the case $X=\Spec k$) is a $k$-vector space $\Vc$ together with a $k$-linear decomposition $\Vc = \bigoplus_{g \in \Char{G}} \Vc_g$. The dual representation $\Vc^\vee$ is defined by $(\Vc^\vee)_g = (\Vc_{-g})^\vee$ for every $g \in \Char{G}$.

If $f\colon Y \to X$ is a $G$-equivariant morphism and $\Fc$ a quasi-coherent $G$-equivariant $\Oc_X$-module, the morphism of $\Oc_{Y/G}$-modules ${\varphi_Y}_*\circ f^*\Fc \to (f/G)^* \circ {\varphi_X}_*\Fc$ is an isomorphism \cite[(1.5.2)]{ega-2}. This induces a $G$-equivariant structure on the $\Oc_Y$-module $f^*\Fc$.

\subsection{Homogeneous sections}
\label{def:weight}
Let $G$ be a finite diagonalisable group acting on a variety $X$. Let $\Fc$ be a quasi-coherent $G$-equivariant $\Oc_X$-module and $g \in \Char{G}$. We say that a section in $H^0(X,\Fc)$ \emph{has weight $g$} if it belongs to the subgroup
\[
H^0(X/G,(\varphi_*\Fc)_g) \subset H^0(X/G,\varphi_*\Fc) = H^0(X,\Fc).
\]
A section of weight $g$ for some $g\in \Char{G}$ will be called \emph{homogeneous} (or \emph{$\Char{G}$-homogeneous}).

If $s\in H^0(X,\Fc)$ is homogeneous, then $\varphi_*\Ic(s) = \im (\varphi_*(s^\vee) \colon \varphi_*(\Fc^\vee) \to \varphi_*\Oc_X)$ (see \S\ref{sect:D}) is a $\Char{G}$-graded ideal of $\varphi_*\Oc_X$. It follows that the closed, resp.\ open, subscheme $Z_{\Fc}(s)$, resp.\ $D_{\Fc}(s)$, of $X$ is $G$-invariant.

\subsection{The fixed locus}

\begin{definition}
Let $G$ be a finite diagonalisable group acting on a variety $X$. Since the quotient morphism $\varphi\colon X \to X/G$ is affine, there is a unique coherent ideal $\Ic$ of $\Oc_X$ such that the ideal $\varphi_*\Ic$ of $\varphi_*\Oc_X$ is generated by $(\varphi_*\Oc_X)_g$ for $g \in \Char{G}-\{0\}$. We let $X^G$ be the corresponding closed subscheme of $X$. The functor associating to a variety $T$ with trivial $G$-action the set of $G$-equivariant morphisms $T \to X$ is represented by $X^G$.
\end{definition}

Note that $X^G$ is a $G$-invariant closed subscheme of $X$ with trivial $G$-action, and that if $Z$ is a $G$-invariant closed or open subscheme of $X$, then $Z^G = X^G \cap Z$.

A $G$-equivariant morphism $f\colon Y \to X$ induces a morphism $f^G \colon Y^G \to X^G$. When $Y$ and $X$ are two varieties with a $G$-action, we will endow the product $X \times Y$ (over $k$) with the diagonal $G$-action; then $(X \times Y)^G= X^G \times Y^G$.

\begin{lemma}
\label{lemm:fixed_reg}
Let $G$ be a finite diagonalisable group acting on a regular variety $X$. Then the variety $X^G$ is regular.
\end{lemma}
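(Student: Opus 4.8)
The plan is to reduce to a purely local statement about graded regular local rings and to produce a \emph{homogeneous} regular system of parameters. Since regularity is local on $X^G$, it suffices to prove that $\Oc_{X^G,y}$ is regular for every point $y \in X^G$. Using \rref{prop:open_action} I would replace $X$ by a $G$-invariant affine open neighbourhood of $y$, so that by \rref{lemm:action_grading} we may write $X = \Spec A$ with $A = \bigoplus_{g \in \Char{G}} A_g$ a $\Char{G}$-graded $k$-algebra, the ideal $I$ defining $X^G$ being the one generated by $\bigcup_{g \neq 0} A_g$. The point $y$ corresponds to a prime $\pf \supseteq I \supseteq \bigoplus_{g \neq 0} A_g$, so that $\pf$ is a graded prime and the residue field $k(y)$ is purely of weight $0$.

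The first genuine step is to observe that the local ring $R = \Oc_{X,y} = A_\pf$ inherits a $\Char{G}$-grading, that is, that the $G$-action localises at the fixed point $y$. Concretely, the coaction $\rho \colon A \to A \otimes_k k[\Char{G}]$, $a \mapsto \sum_g a_g \otimes e_g$, extends to $A_\pf$: for $s \notin \pf$ one has $s_0 \notin \pf$, so $\rho(s) \equiv \overline{s_0}\,(1 \otimes 1)$ modulo the Jacobson radical of the semilocal ring $A_\pf \otimes_k k[\Char{G}]$, whence $\rho(s)$ is invertible there and $s$ may be inverted $G$-equivariantly. As $\Char{G}$ is finite, such a coaction is exactly a grading $R = \bigoplus_g R_g$; note that this argument is insensitive to nilpotents in $k[\Char{G}]$ and therefore covers the infinitesimal case (such as $G = \mup$ in characteristic $p$). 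Since $\pf$ is graded, the maximal ideal $\mf = \pf R$ is a graded ideal, every $s \notin \mf$ is a unit, and $R_g \subseteq I R$ for every $g \neq 0$.

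Next I would choose a homogeneous regular system of parameters. The cotangent space $\mf/\mf^2$ is a finite-dimensional $\Char{G}$-graded $k(y)$-vector space (with $k(y)$ in weight $0$); lifting a homogeneous basis to homogeneous elements $t_1, \dots, t_d \in \mf$ of weights $g_1, \dots, g_d$, where $d = \dim R$, yields by Nakayama and regularity of $R$ a regular system of parameters. Reorder so that $g_i = 0$ for $i \leq r$ and $g_i \neq 0$ for $i > r$. The key claim is that $I R = (t_{r+1}, \dots, t_d)$. The inclusion $\supseteq$ is immediate. For the reverse, writing any homogeneous $s \in R_g$ with $g \neq 0$ as $s = \sum_i c_i t_i$ with $c_i$ homogeneous of weight $g - g_i$ shows that $I R \subseteq \mf\,(t_1, \dots, t_r) + (t_{r+1}, \dots, t_d)$; to upgrade this to $I R \subseteq (t_{r+1}, \dots, t_d)$ I would pass to $R/(t_{r+1}, \dots, t_d)$, whose associated graded ring is, by regularity, the polynomial ring $k(y)[T_1, \dots, T_r]$, concentrated in weight $0$. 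The Krull intersection theorem then forces every nonzero-weight element of $R/(t_{r+1}, \dots, t_d)$ to vanish, giving $I R \subseteq (t_{r+1}, \dots, t_d)$.

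Granting the claim, $\Oc_{X^G,y} = R/IR = R/(t_{r+1}, \dots, t_d)$ is the quotient of a regular local ring by part of a regular system of parameters, hence regular, which completes the proof. \textbf{The main obstacle} I anticipate is the first step: making precise, uniformly across étale and infinitesimal diagonalisable groups, that the action localises to a genuine $\Char{G}$-grading on $\Oc_{X,y}$ (the computation with the Jacobson radical of $A_\pf \otimes_k k[\Char{G}]$ is exactly where the finiteness of $\Char{G}$ and the admissibility of nilpotents enter, and is what lets the infinitesimal case be treated on the same footing). The identification $IR = (t_{r+1}, \dots, t_d)$ is the other delicate point, since weight cancellations among the $g_i$ rule out a naive degree count and force the passage to the associated graded ring.
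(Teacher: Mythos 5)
Your proof is correct, and its skeleton is the same as the paper's: pass to the local ring $R=\Oc_{X,y}$ at a point of $X^G$, equip it with a $\Char{G}$-grading, lift a homogeneous basis of $\mf/\mf^2$ to a homogeneous regular system of parameters, show that the ideal cutting out $X^G$ in $R$ is generated by the parameters of nonzero weight, and conclude that $\Oc_{X^G,y}$ is a quotient of a regular local ring by part of a regular system of parameters. The differences lie in the two technical substeps, where the paper's choices are shorter. For the grading on $R$, instead of your coaction/Jacobson-radical extension argument (which is valid, and as you say uniform across the \'etale and infinitesimal cases), the paper uses that $G$-invariant affine open neighbourhoods of $y$ are cofinal by \rref{prop:open_action}, so that $R$ is a filtered colimit of $\Char{G}$-graded rings along graded homomorphisms, hence graded; this has the further benefit that the identification of the defining ideal of $X^G$ in $R$ with the ideal generated by the $R_g$, $g\neq 0$, comes essentially for free (Bourbaki), whereas in your setup the inclusion $R_g\subseteq IR$ for $g\neq 0$ --- which you assert and then need for your ``immediate'' inclusion $(t_{r+1},\dots,t_d)\subseteq IR$ --- requires a short verification (write $r=a/s$ and compare weight-$g$ components of $sr=a$ to get $r=s_0^{-1}a_g$). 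For the key claim $IR=(t_{r+1},\dots,t_d)$, your own decomposition $s=\sum_i c_i t_i$ with $c_i$ homogeneous of weight $g-g_i$ already finishes the argument the paper's way: for $i\le r$ the coefficient $c_i$ has nonzero weight $g$, hence lies in $IR$, so $IR\subseteq \mf\cdot IR+(t_{r+1},\dots,t_d)$ and Nakayama concludes. Your detour through the associated graded ring of $R/(t_{r+1},\dots,t_d)$ and the Krull intersection theorem is also correct (and note it only needs surjectivity of $\kappa[T_1,\dots,T_r]\to\mathrm{gr}(R/(t_{r+1},\dots,t_d))$ onto the $\Char{G}$-graded target, not regularity of that quotient), but it is heavier machinery than the one-line Nakayama step your computation already set up.
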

\begin{proof}
Let $x$ be a point of $X^G$ and $R=\Oc_{X,x}$. Any open neighborhood of $x$ contains a $G$-invariant open neighborhood by \tref{prop:open_action}{prop:open_action:sat}{prop:open_action:sat:inv}, which in addition may be taken to be affine by \dref{prop:open_action}{prop:open_action:cover}. Therefore the $k$-algebra $R$ is a direct limit of $\Char{G}$-graded $k$-algebras, and is thus naturally $\Char{G}$-graded \cite[II, \S11, N$^\circ$3, Remarque 3)]{Bou-A-13}. In addition it follows from \cite[II, \S6, N$^\circ$2, Propositions 3 and 4]{Bou-A-13} that the ideal $I$ of $R$ generated by $R_g$ for $g \in \Char{G}-\{0\}$ is the kernel of the surjection $R \to \Oc_{X^G,x}$. It will suffice to prove that the noetherian local ring $R/I$ is regular.

The maximal ideal $\mf$ of $R$ is $\Char{G}$-graded, as is any ideal containing $I$ (denoting by $m_g$ the component in $R_g$ of an element $m \in \mf$, we have $m_g \in I \subset \mf$ if $g\neq 0$, and therefore also $m_0 = m - \sum_{g \neq 0} m_g \in \mf$). It follows that the ideal $\mf^2$ is graded, and is thus a graded submodule of $\mf$. Let $\kappa=R/\mf=R_0/\mf_0$. The $\kappa$-vector space $V=\mf/\mf^2$ splits as $\bigoplus_{g \in \Char{G}} V_g$, where $V_g = \mf_g/(\mf^2)_g$. Lifting a $\kappa$-basis of each $V_g$ to $\mf_g$, we obtain a regular system of parameters $x_1,\cdots,x_n$ of $R$ and elements $g_1, \cdots , g_n \in \Char{G}$ such that $x_i \in R_{g_i}$ for each $i$. Let $J$ be the ideal of $R$ generated by those $x_i$ such that $g_i \neq 0$. To conclude the proof, it will suffice to prove that $I=J$. Clearly $J \subset I$. Since $\mf = x_1R+\cdots+x_nR$, it follows that $R_g=\mf_g = x_1 R_{g-g_1} + \cdots +x_n R_{g-g_n}$ for each $g \neq 0$. But for such $g$, we have $x_i  R_{g-g_i} \subset J$ if $g_i \neq 0$, and $x_i  R_{g-g_i} \subset \mf I$ if $g_i = 0$. Thus $I \subset \mf I + J$, which by Nakayama's lemma implies that $I=J$.
\end{proof}

Let $G$ be a finite diagonalisable group and $Y\to X$ a $G$-equivariant morphism. The deformation variety (\S\ref{sect:deformation}) inherits a $G$-action by letting the section $t$ be of weight $0 \in \Char{G}$, and the diagram of \S\ref{sect:deformation} is $G$-equivariant (for the trivial $G$-action on $\Au$).

\begin{lemma}
\label{lemm:action_char}
Let $G$ be a finite diagonalisable group acting on a variety $X$. Let $D$ be the deformation variety of the closed immersion $X^G \to X$. Then the closed immersions $X^G \to N^G$ and $X^G \times \Au \to D^G$ are isomorphisms.
\end{lemma}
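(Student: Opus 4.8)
The plan is to reduce the whole statement to an affine computation with graded algebras via the dictionary of \rref{lemm:action_grading}. The assertion is Zariski-local on $X$, and all the schemes involved ($X^G$, $N$, $D$ and their fixed loci) are compatible with restriction to $G$-invariant open subschemes; so by \dref{prop:open_action}{prop:open_action:cover} I may assume $X=\Spec A$ with $A$ a $\Char{G}$-graded $k$-algebra. Write $H=\Char{G}$ and $A=\bigoplus_h A_h$. By definition $X^G=\Spec A/I$, where $I$ is the ideal generated by the homogeneous elements of nonzero weight; since $A_h\subset I$ for $h\neq 0$, the quotient $A/I$ is concentrated in weight $0$. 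I will use repeatedly that, for any $H$-graded algebra $B$, the fixed locus $(\Spec B)^G$ is cut out by the ideal generated by the homogeneous elements of nonzero weight.

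For the normal cone I would identify $N=\Spec B$ with $B=\bigoplus_{j\geq 0}I^j/I^{j+1}$, carrying the $H$-grading induced from $A$ (as $N$ is the fibre of $D$ over $t=0$ and $t$ has weight $0$); the zero section $X^G\to N$ is the augmentation $B\to B_0=A/I$. The claim is that the ideal $J$ cutting out $N^G$ equals the irrelevant ideal $B_+=\bigoplus_{j\geq 1}I^j/I^{j+1}$. On one hand $B_0=A/I$ is concentrated in weight $0$, so every homogeneous element of nonzero weight lies in $B_+$, whence $J\subset B_+$. On the other hand $B_1=I/I^2$ is generated, over $B_0$, by the classes of the homogeneous generators of $I$, all of nonzero weight, so $B_1\subset J$; as $B$ is generated in degree one over $B_0$, this forces $B_+\subset J$. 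Hence $B/J=B_0=A/I$ and $X^G\to N^G$ is an isomorphism.

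For the deformation variety I would work with the extended Rees algebra $\Rc=\bigoplus_n \Ic^{-n}t^n$, graded by $H$ with $t$ of weight $0$, and consider the surjection $\Rc\to (A/I)[t]$ obtained from $\Rc\subset A[t,t^{-1}]$ by reducing modulo $I$ (for $n<0$ the coefficients lie in $\Ic^{-n}\subset I$, so those components die, while for $n\geq 0$ one reduces $A\to A/I$). Checking against the cartesian squares of \S\ref{sect:deformation} shows this is exactly the map of the closed immersion $X^G\times\Au\to D$, so it remains to prove that its kernel is the ideal $K$ cutting out $D^G$. The inclusion $K\subset\ker$ is clear since $(A/I)[t]$ is concentrated in weight $0$. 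For the reverse inclusion, the key observation is that for a homogeneous generator $f$ of $I$ one has $ft^{-1}\in\Rc$ of weight $\deg f\neq 0$, hence $ft^{-1}\in K$; writing an element of $\Ic^{-n}t^n$ with $n<0$ as a sum of products $(f_1 t^{-1})\cdots(f_{|n|}t^{-1})$ times an element of $A$ shows that every such component lies in $K$, while for $n\geq 0$ the kernel $It^n$ lies in $K$ because $f\in K$ already in $t$-degree $0$. Therefore $\Rc/K=(A/I)[t]$ and $X^G\times\Au\to D^G$ is an isomorphism.

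The main obstacle is precisely the negative-degree part of the Rees algebra: unlike the nonnegative part, one cannot reduce it by multiplying known elements of $K$ by powers of $t$ (since $t^{-1}\notin\Rc$), and the device $ft^{-1}\in K$ is what makes the argument go through. The passage from the affine case to the general statement is routine, since a $G$-invariant affine cover of $X$ induces compatible covers of $X^G$, $N$, $D$ and their fixed loci, and being an isomorphism may be checked on such a cover.
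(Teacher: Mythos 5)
Your proposal is correct, and at its core it is the same argument as the paper's: reduce to $X=\Spec A$ with $A$ a $\Char{G}$-graded $k$-algebra, and identify the ideal of $X^G\times\Au$ in the extended Rees algebra $R=\bigoplus_n I^{-n}t^n$ with the ideal cutting out $D^G$, the hinge in both cases being that $ft^{-1}$ is homogeneous of nonzero weight whenever $f$ is a homogeneous generator of $I$. There are two small organizational differences worth noting. First, the paper proves only the statement for $D$ and deduces the one for $N$ by base change along $0\to\Au$: since $N$ is the $G$-invariant fibre of $D$ over $t=0$, one gets $N^G=N\cap D^G=(X^G\times\Au)\times_{\Au}0=X^G$; you instead prove the normal-cone statement separately via the associated graded algebra $\bigoplus_{j\geq 0}I^j/I^{j+1}$, which is fine but could have been obtained for free. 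Second, in the delicate inclusion (ideal of the immersion contained in the fixed-locus ideal) the paper shows that the generating set $It^{-1}$ lies in the fixed ideal, which forces it to handle the weight-zero part $(It^{-1})_0=I_0t^{-1}$ via the observation that $I_0$ is generated by the products $A_g\cdot A_{-g}$ and hence $I_0\subset I^2$; you work $t$-degree by $t$-degree on the explicit kernel $\bigoplus_{n<0}I^{-n}t^n\oplus\bigoplus_{n\geq 0}It^n$ and factor the negative-degree pieces as $A$-multiples of products $(f_1t^{-1})\cdots(f_{|n|}t^{-1})$, which neatly sidesteps that observation. Both bookkeeping schemes are valid, so this is a faithful, slightly rearranged version of the paper's proof.
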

\begin{proof}
It will suffice to prove that $X^G \times \Au \to D^G$ is an isomorphism. We may assume that $X$ is the spectrum of a $\Char{G}$-graded $k$-algebra $A$. The closed subscheme $X^G$ of $X$ is defined by the ideal $I\subset A$ generated by $A_g$ for $g \in \Char{G}-\{0\}$. The variety $D$ is the spectrum of the $\Char{G}$-graded $k$-algebra $R=\bigoplus_{n \in \Zz} I^{-n} t^n$, where $t \in R_0$. The closed subscheme $X^G \times \Au$ of $D$ is defined by the ideal $K \subset R$ generated by $It^{-1}$. Let $J \subset R$ be the ideal generated by $R_g$ for $g\in \Char{G} - \{0\}$. It will suffice to prove that $J=K$.

Let $g \neq 0$. For $n \geq 0$ we have $(I^{-n} t^n)_g =A_g t^n \subset It^n = (It^{-1}) t^{n+1} \subset K$, while for $n <0$ we have $(I^{-n} t^n)_g \subset I^{-n} t^n = (It^{-1})^{-n} \subset K$. Thus $R_g \subset K$, proving that $J \subset K$. 

Conversely we have $(It^{-1})_g \subset R_g \subset J$ when $g \neq 0$. The group $I_0$ is generated by the products $A_g \cdot A_{-g}$ for $g \neq 0$, hence $I_0 \subset I^2$. Thus $(It^{-1})_0=I_0t^{-1} \subset I(I t^{-1}) \subset IR \subset J$. We have proved that $It^{-1} \subset J$, hence $K \subset J$.
\end{proof}

\subsection{Free actions}
\begin{definition}
\label{def:free}
Let $G$ be a finite diagonalisable group acting on a variety $X$. We say that \emph{$G$ acts freely on $X$} if for every $g,h\in \Char{G}$ the morphism 
\[
(\varphi_*\Oc_X)_g \otimes_{\Oc_{X/G}} (\varphi_*\Oc_X)_h \to (\varphi_*\Oc_X)_{g+h}
\]
is an isomorphism ($\varphi\colon X \to X/G$ denotes the quotient morphism, see \rref{prop:quotient}).
\end{definition}

Note that if the finite diagonalisable group $G$ acts freely on $X$, then the $\Oc_{X/G}$-modules $(\varphi_*\Oc_X)_g$ are invertible, and thus the morphism $\varphi \colon X \to X/G$ is flat and finite of degree $|G|$. In addition, it follows from \dref{lemm:torsor_cartesian}{lemm:torsor_cartesian:diag} below (applied to the action morphism $G \times X \to X$) that $X \to X/G$ is a $G$-torsor in the fppf topology.

\begin{lemma}
\label{lemm:torsor_cartesian}
Let $G$ be a finite diagonalisable group and $f\colon Y \to X$ a $G$-equivariant morphism. Assume that $G$ acts freely on $X$. Then:
\begin{enumerate}[label=(\roman*),ref=\roman*]
\item \label{lemm:torsor_cartesian:free} The group $G$ acts freely on $Y$.
\item \label{lemm:torsor_cartesian:comp} For every $g \in \Char{G}$ the morphism $(f/G)^*({\varphi_X}_*\Oc_X)_g \to ({\varphi_Y}_*\Oc_Y)_g$ is an isomorphism.
\item \label{lemm:torsor_cartesian:diag} The following square is cartesian:
\[ \xymatrix{
Y\ar[r] \ar[d] & X \ar[d] \\ 
Y/G \ar[r] & X/G
}\]
\end{enumerate}
\end{lemma}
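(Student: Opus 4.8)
The plan is to reduce all three assertions to a computation with $\Char{G}$-graded algebras, exploiting the fact that freeness makes $\varphi_{X*}\Oc_X$ \emph{strongly graded}. The crux is \eqref{lemm:torsor_cartesian:comp}, from which \eqref{lemm:torsor_cartesian:free} and \eqref{lemm:torsor_cartesian:diag} follow formally, so I would prove it first.

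First I would reduce to the affine graded setting. All three assertions can be checked locally on $Y/G$: \eqref{lemm:torsor_cartesian:comp} is an equality of sheaves there, \eqref{lemm:torsor_cartesian:free} is phrased via sheaf morphisms on $Y/G$ (\rref{def:free}), and in \eqref{lemm:torsor_cartesian:diag} both $Y$ and $Y/G \times_{X/G} X$ are affine over $Y/G$. Covering $X/G$ by affine opens $V_i = U_i/G$ with $U_i = \varphi_X^{-1}(V_i)$ a $G$-invariant affine open of $X$ (using \dref{prop:open_action}{prop:open_action:cover} and \dref{prop:quotient}{prop:quotient:open}), and covering each $(f/G)^{-1}(V_i) \subset Y/G$ by affine opens $W$ with $\tilde W = \varphi_Y^{-1}(W) \subset f^{-1}(U_i)$, the morphism $f$ restricts to a morphism of $\Char{G}$-graded $k$-algebras $A \to B$, where $U_i = \Spec A$ and $\tilde W = \Spec B$ (\rref{lemm:action_grading}). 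By \rref{prop:quotient_graded} the quotient maps correspond to $A_0 \hookrightarrow A$ and $B_0 \hookrightarrow B$; freeness of the $G$-action on $X$ becomes the statement that multiplication induces isomorphisms $A_g \otimes_{A_0} A_h \xrightarrow{\sim} A_{g+h}$ for all $g,h \in \Char{G}$; and the base-change morphism of \eqref{lemm:torsor_cartesian:comp} becomes, in degree $g$, the map $\mu_g \colon B_0 \otimes_{A_0} A_g \to B_g$, $b \otimes a \mapsto b\, f(a)$.

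To prove \eqref{lemm:torsor_cartesian:comp} I would fix $g$ and use that $A_g \otimes_{A_0} A_{-g} \to A_0$ is surjective to write $1 = \sum_i a_i a_i'$ with $a_i \in A_g$ and $a_i' \in A_{-g}$. Since $A_{-g} \cdot B_g \subset B_0$, the rule $b \mapsto \sum_i a_i' b \otimes a_i$ defines a map $B_g \to B_0 \otimes_{A_0} A_g$, and a short check (moving elements of $A_0$ across the tensor product and using $\sum_i a_i a_i' = 1$) shows it is a two-sided inverse of $\mu_g$. For \eqref{lemm:torsor_cartesian:diag}, the scheme $Y/G \times_{X/G} X$ corresponds to $B_0 \otimes_{A_0} A = \bigoplus_g (B_0 \otimes_{A_0} A_g)$, which \eqref{lemm:torsor_cartesian:comp} identifies with $\bigoplus_g B_g = B$ compatibly with the natural ring map, so $Y \to Y/G \times_{X/G} X$ is an isomorphism. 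For \eqref{lemm:torsor_cartesian:free}, the same partition of unity gives that $c \mapsto \sum_i a_i \otimes a_i' c$ is a two-sided inverse of the multiplication $B_g \otimes_{B_0} B_h \to B_{g+h}$ (using $A_{-g}\cdot B_{g+h}\subset B_h$ and $A_{-g}\cdot B_g \subset B_0$), so $G$ acts freely on $Y$.

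The only genuine computation is the construction of the inverse of $\mu_g$ from the partition of unity $1 = \sum_i a_i a_i'$; everything else is formal bookkeeping. The point requiring the most care is the reduction: one must check that the chosen affine opens actually cover $Y/G$, and above all that the abstract morphism appearing in \eqref{lemm:torsor_cartesian:comp} really is the concrete base-change map $b \otimes a \mapsto b\, f(a)$ in these coordinates, so that the invertibility of $\mu_g$ genuinely establishes the stated isomorphism.
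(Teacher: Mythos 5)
Your proof is correct, and its skeleton matches the paper's: reduce to a morphism of $\Char{G}$-graded $k$-algebras $A \to B$, prove the degree-$g$ base-change map $B_0 \otimes_{A_0} A_g \to B_g$ is bijective by exhibiting an explicit inverse, and deduce \dref{lemm:torsor_cartesian}{lemm:torsor_cartesian:diag} by summing over $g \in \Char{G}$. The genuine difference is in how the inverse is produced. The paper localizes further on $X/G$ so that each $A_g$ is free of rank one over $A_0$ with basis $a_g$ (possible since these modules are invertible and $\Char{G}$ is finite); freeness then forces $a_g a_{-g}$ to be a unit, so $a_g$ is invertible in $A$, its image $b_g$ is invertible in $B$, and the inverses are the one-term formulas $x \mapsto b_g \otimes (b_g^{-1}x)$ and $x \mapsto a_g \otimes (b_g^{-1}x)$. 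You avoid this extra localization entirely: from surjectivity of $A_g \otimes_{A_0} A_{-g} \to A_0$ you extract a partition of unity $1 = \sum_i a_i a_i'$ and write dual-basis inverses such as $b \mapsto \sum_i a_i' b \otimes a_i$, using the inclusions $A_{-g}\cdot B_g \subset B_0$ and $A_{-g}\cdot B_{g+h} \subset B_h$; I checked these computations and they close up correctly. Your route is the standard ``strongly graded ring'' argument: it is marginally more general (no need to trivialize the graded pieces, hence no appeal to finiteness of $\Char{G}$ at that step) and specializes to the paper's formulas when $A_g = a_g A_0$, while the paper's version buys shorter one-term formulas at the cost of one more localization. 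Your explicit treatment of the reduction --- that the chosen affine opens cover $Y/G$, that $\varphi_Y^{-1}(W)$ is affine and maps into $U_i$, and that the abstract sheaf morphism becomes $b \otimes a \mapsto b\,f(a)$ in coordinates --- is also sound; the paper compresses all of this into ``we may assume''.
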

\begin{proof}
We may assume that $f$ is given by a morphism of $\Char{G}$-graded $k$-algebras $A \to B$, and moreover that for every $g \in \Char{G}$ the $A_0$-module $A_g$ is free with basis $a_g$. Then each $a_g$ is invertible in $A$, and its image $b_g \in B_g$ is invertible in $B$. The map $x \mapsto b_g \otimes (b_g^{-1}x)$ is an inverse to the morphism $B_g \otimes_{B_0} B_h \to B_{g+h}$, proving \eqref{lemm:torsor_cartesian:free}. To prove \eqref{lemm:torsor_cartesian:comp}, observe that the morphism $A_g \otimes_{A_0} B_0 \to B_g$ is bijective (an inverse is given by $x \mapsto a_g \otimes (b_g^{-1}x)$). Taking the direct sum over $g\in \Char{G}$, it follows that the morphism $A \otimes_{A_0} B_0 \to B$ is bijective, proving \eqref{lemm:torsor_cartesian:diag}.
\end{proof}

\begin{lemma}
\label{lemm:closed_free}
Let $G$ be a finite diagonalisable group acting on a variety $X$. Let $H$ be a subgroup of $G$ acting freely on $X$. Then every $G$-invariant closed subscheme of $X/H$ is of the form $Y/H$ for some $G$-invariant closed subscheme $Y$ of $X$.
\end{lemma}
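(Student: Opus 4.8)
The plan is to produce $Y$ as the scheme-theoretic preimage of the given subscheme under the quotient morphism $\varphi_H \colon X \to X/H$, and to identify its quotient by $H$ using the freeness hypothesis. Recall that the $G$-action on $X$ descends to a $G$-action on $X/H$ (factoring through $G/H$, with $H$ acting trivially) for which $\varphi_H$ is $G$-equivariant; this is what gives meaning to the phrase ``$G$-invariant closed subscheme of $X/H$''. So let $W \subseteq X/H$ be such a subscheme, and set $Y = \varphi_H^{-1}(W)$, i.e.\ the fibre product $X \times_{X/H} W$. Then $Y$ is a closed subscheme of $X$, and it is $G$-invariant: since $\varphi_H$ is $G$-equivariant, the composite $G \times Y \hookrightarrow G \times X \to X \xrightarrow{\varphi_H} X/H$ equals the $G$-action on $X/H$ applied to the restriction $Y \to X/H$, which factors through the $G$-invariant $W$; hence the action morphism $G \times Y \to X$ factors through $\varphi_H^{-1}(W) = Y$.

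It remains to prove that $Y/H = W$, which is the heart of the matter. As $Y$ is $G$-invariant it is in particular $H$-invariant, so the inclusion $Y \hookrightarrow X$ is an $H$-equivariant closed immersion. Since $H$ acts freely on $X$, \dref{lemm:torsor_cartesian}{lemm:torsor_cartesian:diag} applies and shows that the square with rows $Y \to X$ and $Y/H \to X/H$ is cartesian; that is, $\varphi_H^{-1}(Y/H) = Y$. By construction we also have $\varphi_H^{-1}(W) = Y$. Thus the two closed subschemes $W$ and $Y/H$ of $X/H$ (the latter being closed by \dref{lemm:quotient-map}{lemm:quotient-map:closed}) have the same scheme-theoretic preimage under $\varphi_H$. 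Now freeness of the $H$-action makes $\varphi_H$ flat of degree $|H|$ (see the remark following \rref{def:free}), and it is surjective by \dref{prop:quotient}{prop:quotient:finite-surj}, hence faithfully flat. Faithfully flat descent of closed subschemes then forces $W = Y/H$, which is the desired conclusion.

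The only genuine obstacle I anticipate is this identification $Y/H = W$, and more precisely the passage from ``equal preimages'' to ``equal subschemes''; the faithful-flatness argument above handles it cleanly, and freeness enters exactly twice, once to obtain the cartesian square of \rref{lemm:torsor_cartesian} and once to guarantee flatness of $\varphi_H$. If one prefers an explicit verification, one can argue affine-locally on $X/H$: writing $X = \Spec A$ with $A$ a $\Char{G}$-graded algebra and $\Lambda = \Char{G/H} \subseteq \Char{G}$, so that $H = \Diag(\Char{G}/\Lambda)$ and $X/H = \Spec B$ with $B = \bigoplus_{\lambda \in \Lambda} A_\lambda$, the subscheme $W$ corresponds to a $\Lambda$-graded ideal $J \subseteq B$, and $Y = \Spec(A/AJ)$. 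A direct degree computation then gives $(AJ)_\lambda = J_\lambda$ for every $\lambda \in \Lambda$, whence $(A/AJ)_0 = B/J$, that is $Y/H = W$; these local identifications glue by \dref{prop:quotient}{prop:quotient:open}. I would present the faithfully flat version as the main proof, as it makes the role of the freeness hypothesis the most transparent.
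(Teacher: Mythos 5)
Your proof is correct and takes essentially the same route as the paper's: the paper likewise sets $Y = T \times_{X/H} X$, notes its $G$-invariance, applies \dref{lemm:torsor_cartesian}{lemm:torsor_cartesian:diag} to the $H$-equivariant closed immersion $Y \to X$, and concludes by faithfully flat descent, merely packaged as showing that $Y/H \to T$ becomes an isomorphism after the faithfully flat base change $Y \to T$, rather than as your (equivalent) observation that two closed subschemes of $X/H$ with equal scheme-theoretic preimages under the faithfully flat $\varphi_H$ must coincide.
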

\begin{proof}
Let $T$ be a $G$-invariant closed subscheme of $X/H$. Then $Y = T \times_{X/H} X$ is a $G$-invariant closed subscheme of $X$. Since $H$ acts trivially on $T$, the morphism $Y \to T$ factors through $Y/H$. Consider the following commutative diagram:
\[ \xymatrix{
Y \ar[r]_= \ar[d] &Y\ar[r] \ar[d] & X \ar[d] \\ 
Y/H \ar[r] & T \ar[r] & X/H
}\]
The square on the right is cartesian by construction, and so is the exterior square by \dref{lemm:torsor_cartesian}{lemm:torsor_cartesian:diag}. Therefore the square on the left is cartesian. Vertical morphisms are faithfully flat, and it follows by descent that $Y/H \to T$ is an isomorphism.
\end{proof}

\subsection{\texorpdfstring{$\mun$}{\textmu n}-torsors}

\begin{definition}
\label{def:Lchi}
Let $X$ be a variety with a free $\mun$-action (see \rref{def:free}). We denote by $\Lchi{X}$ the invertible $\Oc_{X/\mun}$-module $(\varphi_*\Oc_X)_1$, where $1$ denotes the canonical generator of $\Char{\mun} = \Zz/n$.  The isomorphism $\Lchi{X}^{\otimes n} \to (\varphi_*\Oc_X)_0 = \Oc_{X/G}$ allows us to define an isomorphism class $\Lchi{X}^{\otimes j}$ for every $j \in \Zz/n$. If $f \colon Y \to X$ is a $\mun$-equivariant morphism, then $(f/\mun)^*(\Lchi{X}) \simeq \Lchi{Y}$ by \dref{lemm:torsor_cartesian}{lemm:torsor_cartesian:comp}, so that we will usually write $\Lch$ instead of $\Lchi{X}$.\end{definition}

\begin{definition}
\label{def:schi}
Let $X$ be a variety with a free $\mun$-action (see \rref{def:free}). We construct below a section $\sch_X$ over $X/\mun$ of the sheaf $\Gmn$ defined in \rref{def:Gmn}. If $U$ is a $\mun$-invariant open subscheme of $X$ such that the $\Oc_{U/\mun}$-module $\Lchi{U} = \Lchi{X}|_{U/\mun}$ admits a nowhere vanishing section $a$, the image of $a^{\otimes n}$ under the morphism $(\Lchi{U})^{\otimes n} \to ((\varphi_U)_*\Oc_U)_0 = \Oc_{U/\mun}$ is an element of $H^0(U/\mun,\Gm)$, whose image in $H^0(U/\mun,\Gm/n)$ we denote by $\sch_U$. The element $\sch_U$ does not depend on the choice of $a$, hence this construction glues to define a section $\sch_X \in H^0(X/\mun,\Gmn)$.

If $f \colon Y \to X$ is a $\mun$-equivariant morphism, then $(f/\mu_n)^*(\sch_X) = \sch_Y$, so that we will usually write $\sch$ instead of $\sch_X$.
\end{definition}

\begin{remark}
Assume that $\mun$ acts freely on a variety $X$.
\begin{enumerate}[label=(\roman*)]
\item By \rref{rem:sheafification}, we have a morphism $H^1_{fppf}(X/\mun,\mun) \to H^0(X/\mun,\Gmn)$, and $\sch_X \in H^0(X/\mun,\Gmn)$ is the image of the class of the $\mun$-torsor $X \to X/\mun$.

\item The class of the invertible $\Oc_{X/\mun}$-module $\Lchi{X}$ is the image of the class of the $\mun$-torsor $X \to X/\mun$ under the morphism $H^1_{fppf}(X/\mun,\mun) \to H^1_{fppf}(X/\mun,\Gm) = \Pic(X/\mun)$ induced by the inclusion $\mun \subset \Gm$.
\end{enumerate}
\end{remark}

\begin{example}
\label{ex:Au}
Let $R=k[x]$ be the coordinate ring of $\Au$. We define a $\Zz/n$-grading on the $k$-algebra $R$ by letting $R_i$, for $i \in \Zz/n$, be the set of polynomials in $k[x]$ whose $x^j$-th coefficient vanishes unless $i = j\mod n$. This gives $\mun$-action on $\Au$. The fixed locus $(\Au)^\mun$ is the closed point $0$. The action is free on the open subscheme $X=\Au-0$, and the quotient $X/\mun$ is the spectrum of $S=k[x^n,x^{-n}]$. The element $x$ is a basis of the $S$-module $S_1$, so that the invertible $\Oc_{X/\mun}$-module $\Lchi{X}$ is trivial, and $\sch_X \in H^0(X/\mun,\Gmn)$ is the image of $x^n \in  H^0(X/\mun,\Gm)$.
\end{example}

\begin{lemma}
\label{lemm:descent_Cartier}
Let $Z \to X$ be a $\mun$-equivariant principal effective Cartier divisor, given by a section of $\Oc_X$ of weight $r \in \Zz/n=\Char{\mun}$ (see \S\ref{def:weight}). If $\mun$ acts freely on $X$, then $Z/\mun \to X/\mun$ is an effective Cartier divisor whose ideal is isomorphic to the $\Oc_{X/\mun}$-module $\Lch^{\otimes -r}$.
\end{lemma}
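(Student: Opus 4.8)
The plan is to reduce to the affine case and then argue entirely with the $\Char{\mun}$-grading, in the style of the proof of \rref{lemm:action_char}.

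\emph{Reduction to the affine graded case.} Being an effective Cartier divisor, and having an ideal isomorphic to $\Lch^{\otimes -r}$, are both conditions local on $X/\mun$; moreover $\Lch$ is compatible with restriction to $\mun$-invariant open subschemes (\rref{def:Lchi}), and so is the formation of the quotient (\dref{prop:quotient}{prop:quotient:open}). Hence, covering $X$ by affine $\mun$-invariant open subschemes via \dref{prop:open_action}{prop:open_action:cover}, I may assume $X = \Spec A$ with $A = \bigoplus_{g \in \Zz/n} A_g$ a $\Zz/n$-graded $k$-algebra carrying a free $\mun$-action (\rref{def:free}), the given section being a homogeneous element $s \in A_r$ and $Z = \Spec A/sA$. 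Since $sA$ is the (invertible) ideal of an effective Cartier divisor, $s$ is a nonzerodivisor in $A$.

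\emph{Computing the ideal on the quotient.} As $s$ is homogeneous, $sA$ is a graded ideal and $A/sA$ inherits a $\Zz/n$-grading. By \rref{prop:quotient_graded}, the quotient $Z/\mun$ is $\Spec (A/sA)_0 = \Spec A_0/(sA)_0$, so that $Z/\mun \to X/\mun$ is the closed immersion cut out by the ideal $(sA)_0 \subset A_0 = \Oc_{X/\mun}$. Since $sA$ is graded, a homogeneous element of degree $0$ is of the form $sa$ with $a \in A_{-r}$, whence $(sA)_0 = s\,A_{-r}$.

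\emph{Conclusion.} Multiplication by $s$ is an $A_0$-linear surjection $A_{-r} \to (sA)_0$ which is injective because $s$ is a nonzerodivisor, hence an isomorphism. The freeness of the action provides the canonical identification $A_{-r} = (\varphi_*\Oc_X)_{-r} \cong \Lch^{\otimes -r}$ (\rref{def:Lchi}), and composing shows that the ideal $(sA)_0$ is isomorphic to the invertible module $\Lch^{\otimes -r}$. In particular this ideal is invertible, so $Z/\mun \to X/\mun$ is an effective Cartier divisor with ideal isomorphic to $\Lch^{\otimes -r}$, as claimed. There is no serious obstacle here: the argument rests only on the identity $(sA)_0 = s\,A_{-r}$ and the injectivity of multiplication by $s$. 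The two points requiring a little care are extracting the nonzerodivisor property of $s$ from the effective-Cartier hypothesis, and observing that the isomorphisms constructed above are restrictions of globally defined maps (multiplication by the global section $s$, together with the freeness identifications) and therefore glue over the cover without further ado.
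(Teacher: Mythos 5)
Your proof is correct and follows essentially the same route as the paper's: your core identity $(sA)_0 = s\,A_{-r}$ is exactly the paper's computation $(z \cdot \varphi_*\Oc_X)\cap (\varphi_*\Oc_X)_0 = z \cdot (\varphi_*\Oc_X)_{-r}$, combined with the nonzerodivisor property of the section and the freeness identification $(\varphi_*\Oc_X)_{-r} \simeq \Lch^{\otimes -r}$. The only difference is presentational: the paper carries out this computation directly at the sheaf level on $X/\mun$, so the affine reduction and the gluing discussion in your final paragraph become unnecessary.
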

\begin{proof}
Let $z$ be the section. The closed immersion $Z/\mun \to X/\mun$ is given by the ideal $(z \cdot \varphi_*\Oc_X)\cap (\varphi_*\Oc_X)_0 = z \cdot (\varphi_*\Oc_X)_{-r}$ of $\Oc_{X/\mun}$, which is isomorphic to $\Lch^{\otimes -r}$.
\end{proof}

\begin{lemma}
\label{lemma:nofix_free}
Let $p$ be a prime number, and $X$ a variety with a $\mup$-action. If $X^{\mup} = \varnothing$, then $\mup$ acts freely on $X$.
\end{lemma}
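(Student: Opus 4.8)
The plan is to reduce to a purely algebraic statement about $\Zz/p$-graded rings, isolate the single place where the primality of $p$ enters, and then bootstrap to the freeness condition of \rref{def:free} by a localisation argument. First I would observe that both the hypothesis and the conclusion are local on the quotient $X/\mup$. Covering $X$ by $\mup$-invariant affine open subschemes (\dref{prop:open_action}{prop:open_action:cover}) yields an open cover of $X/\mup$; the emptiness of the fixed locus may be tested on this cover since $U^{\mup} = X^{\mup}\cap U$, and freeness is a condition on the multiplication morphisms of sheaves over $X/\mup$, hence also local. Thus I may assume $X=\Spec A$ with $A = \bigoplus_{i \in \Zz/p} A_i$ a $\Zz/p$-graded $k$-algebra and $\Oc_{X/\mup} = A_0$ by \rref{prop:quotient_graded}. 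Unwinding the definition of the fixed locus, the ideal cutting out $X^{\mup}$ is the graded ideal $I$ of $A$ generated by the $A_i$ with $i \neq 0$; since $1$ is homogeneous of degree $0$ and $I$ is graded, the hypothesis $X^{\mup}=\varnothing$ translates into $1 \in I_0 = \sum_{i \neq 0} A_i A_{-i}$.

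The key step, and the only one using that $p$ is prime, is to upgrade this to $1 \in A_1 A_{-1}$. Writing $P_i = A_i A_{-i}$, each $P_i$ is an ideal of $A_0$, and grouping factors shows $P_i P_j = (A_iA_j)(A_{-i}A_{-j}) \subseteq P_{i+j}$ (indices in $\Zz/p$). Suppose $P_1 \neq A_0$ and choose a maximal ideal $\mf \supseteq P_1$. For each $i \neq 0$ primality provides an integer $k$ with $ki \equiv 1 \pmod p$, whence $P_i^{k} \subseteq P_{ki} = P_1 \subseteq \mf$; as $\mf$ is prime this forces $P_i \subseteq \mf$. Then $\sum_{i\neq 0} P_i \subseteq \mf \subsetneq A_0$, contradicting $1 \in \sum_{i\neq 0}P_i$. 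Hence $1 \in A_1 A_{-1}$, i.e. there exist $a_l \in A_1$ and $b_l \in A_{-1}$ with $\sum_l a_l b_l = 1$.

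Finally I would deduce freeness by checking that the maps $A_g \otimes_{A_0} A_h \to A_{g+h}$ are isomorphisms prime by prime on $A_0$, which suffices since being an isomorphism of modules is a local property. After localising at a prime of $A_0$ I may assume $A_0$ local; the relation $\sum_l a_l b_l = 1$ then has a summand $a_l b_l$ that is a unit, producing $a \in A_1$ and $c \in A_{-1}$ with $ac = 1$, so that $a$ is a unit of $A$ with inverse $c$. It follows that $A_g = A_0 a^g$ is free of rank one for every $g$ (for $x \in A_g$ one has $x = (xc^g)a^g$ with $xc^g \in A_g A_{-g} \subseteq A_0$, and $a^g$ is a non-zero-divisor), and the multiplication maps send generator to generator, hence are isomorphisms. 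The main obstacle is the key step of the second paragraph: the passage from $1 \in \sum_{i\neq 0} A_i A_{-i}$ to $1 \in A_1 A_{-1}$ genuinely requires $p$ to be prime, since for composite $n$ a proper nonzero subgroup of $\Zz/n$ can absorb all the interaction between graded pieces, and the analogous statement for $\mun$ indeed fails.
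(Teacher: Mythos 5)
Your proof is correct, but its mechanics differ from the paper's in two ways worth noting. The paper's proof stays on $X$ itself: emptiness of $X^{\mup}$ says precisely that the $\mup$-invariant principal open subschemes $D(f)$, for $f$ homogeneous of nonzero degree, cover $X$, so after replacing $X$ by this cover one may assume some $a_m \in A_m$, $m \neq 0$, is a unit of $A$; primality of $p$ then gives units $a_i \in A_i$ in \emph{every} degree (powers of $a_m$), and the multiplication maps $A_i \otimes_{A_0} A_j \to A_{i+j}$ are isomorphisms with the explicit inverse $a \mapsto a_i \otimes (a_i^{-1}a)$. You instead work over the quotient: you translate the hypothesis into $1 \in \sum_{i \neq 0} A_iA_{-i}$ inside $A_0$, use the ideal calculus $P_iP_j \subseteq P_{i+j}$ together with prime avoidance to upgrade this to $1 \in A_1A_{-1}$ (this is where primality enters for you, via $P_i^k \subseteq P_{ki} = P_1$), and then verify freeness stalkwise on $\Spec A_0 = X/\mup$, where a unit $a \in A_1$ makes each $A_g$ free of rank one. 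The paper's route is shorter and needs no commutative algebra beyond the cover; yours has the merit of localising where the freeness condition of \rref{def:free} actually lives (on $\Oc_{X/\mup}$-modules rather than on $X$), of isolating the use of primality in a single clean ideal-theoretic step, and of making visible why the statement fails for $\mu_n$ with $n$ composite. One point you leave implicit but should record: in the final step the generator identity $a^g\cdot a^h = a^{g+h}$ involves integer representatives, and $a^{g+h}$ differs from $a^{(g+h) \bmod p}$ by the factor $(a^p)^{\lfloor (g+h)/p\rfloor}$, which is a unit of $A_0$ since $a^p \in A_0$; this is routine but needed for ``generator goes to generator.''
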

\begin{proof}
We may assume that $X$ is the spectrum of a $\Zz/p$-graded $k$-algebra $A$. By assumption, the variety $X$ is covered by the $\mup$-invariant open subschemes $D(f)$ for $f \in A_m$ with $m \in \Zz/p-\{0\}$. Replacing $X$ with this cover, we may assume that there is $m\in \Zz/p-\{0\}$ such that $A_m$ contains an element $a_m$ invertible in $A$. Since $p$ is prime, it follows that for each $i \in \Zz/p$ the set $A_i$ contains an element $a_i$ invertible in $A$ (a power of $a_m$). Thus $A_i \otimes_{A_0} A_j \to A_{i+j}$ is an isomorphism for each $i,j \in \Zz/p$ (an inverse is given by $a \mapsto a_i \otimes (a_i^{-1}a)$).
\end{proof}

\subsection{Constant finite abelian groups}
\label{sect:constant}
Let $\Gamma$ be an ordinary finite abelian group. The group structure on $\Gamma$ induces a Hopf algebra structure on the $k$-algebra $k^\Gamma$ of maps of sets $\Gamma \to k$ (see  e.g. \cite[\S2.3]{Waterhouse}). The corresponding algebraic group $\Gamma_k=\Spec k^\Gamma$ is such that $\Hom_k(\Spec R,\Gamma_k)=\Gamma$ for any commutative connected $k$-algebra $R$. An action of the ordinary group $\Gamma$ on a variety $X$  (in the sense of \S\ref{sect:results}) is the same thing as an action of the algebraic group $\Gamma_k$ and the notions of equivariant morphisms coincide.

\begin{lemma}
\label{lemm:constant_is_diag}
Let $\Gamma$ be an ordinary finite abelian group. If $k$ contains a root of unity whose order is the exponent of $\Gamma$, then the algebraic group $\Gamma_k$ is finite diagonalisable.
\end{lemma}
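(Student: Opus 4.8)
The plan is to identify $\Gamma_k$ explicitly with the diagonalisable group $\Diag(\widehat{\Gamma})$, where $\widehat{\Gamma} = \Hom(\Gamma, k^{\times})$ denotes the group of characters of $\Gamma$ with values in $k^{\times}$. Writing $n$ for the exponent of $\Gamma$, I would first record two consequences of the hypothesis. Since $k$ contains a root of unity of order exactly $n$, the characteristic of $k$ cannot divide $n$ (otherwise $x^n-1$ would have strictly fewer than $n$ distinct roots in any extension, and no element of $k^{\times}$ could have order $n$). Moreover every character of $\Gamma$ takes values in $\mu_n(k)$, which is cyclic of order $n$; as $\Gamma$ is a finite abelian group of exponent dividing $n$, this yields $\widehat{\Gamma} \cong \Gamma$, and in particular $|\widehat{\Gamma}| = |\Gamma|$. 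This last point is the only place where the hypothesis is genuinely used, and it is also the heart of the matter: without enough roots of unity the dual $\widehat{\Gamma}$ is too small, and $\Gamma_k$ is merely a form of a diagonalisable group.

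Next I would produce the comparison morphism at the level of coordinate rings. The coordinate ring of $\Diag(\widehat{\Gamma})$ is the group algebra $k[\widehat{\Gamma}]$, and that of $\Gamma_k$ is the algebra $k^\Gamma$ of functions $\Gamma \to k$, whose Hopf structure is dual to the group law of $\Gamma$. Each character $\chi \in \widehat{\Gamma}$ is in particular a function $\Gamma \to k$, hence an element of $k^\Gamma$, and the identity $\chi(\gamma_1\gamma_2) = \chi(\gamma_1)\chi(\gamma_2)$ says precisely that $\chi$ is a group-like element of the Hopf algebra $k^\Gamma$. Since the pointwise product of two characters is again a character, the assignment $e_\chi \mapsto \chi$ defines a morphism of $k$-algebras $k[\widehat{\Gamma}] \to k^\Gamma$; as it carries group-like elements to group-like elements (and respects the counit, which is evaluation at the identity of $\Gamma$), it is a morphism of Hopf algebras, corresponding to a homomorphism of algebraic groups $\Gamma_k \to \Diag(\widehat{\Gamma})$.

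Finally I would check that this morphism is an isomorphism, for which it suffices to prove bijectivity of $e_\chi \mapsto \chi$. Injectivity is Dedekind's linear independence of the distinct characters $\Gamma \to k^{\times}$ inside $k^\Gamma$. Both source and target have $k$-dimension $|\Gamma|$ — the source because $|\widehat{\Gamma}| = |\Gamma|$ by the first step — so injectivity forces surjectivity. Hence $k^\Gamma \cong k[\widehat{\Gamma}]$ as Hopf algebras and $\Gamma_k \cong \Diag(\widehat{\Gamma})$ is finite diagonalisable. I expect the dimension count, equivalently the equality $|\widehat{\Gamma}| = |\Gamma|$, to be the only real obstacle; everything else is formal. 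As an alternative to the character argument one may instead decompose $\Gamma$ into cyclic factors (both $\Gamma \mapsto \Gamma_k$ and $\Diag$ turn direct products into products of group schemes) and treat $\Gamma = \Zz/m$ directly, using that $t^m - 1$ splits into distinct linear factors over $k$ to obtain a Hopf isomorphism $k[t]/(t^m-1) \cong k^{\Zz/m}$, that is $\mu_m \cong (\Zz/m)_k$.
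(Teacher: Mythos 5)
Your proof is correct and follows essentially the same route as the paper: both identify $\Gamma_k$ with $\Diag(\Gamma^*)$ by viewing characters $\Gamma \to k^\times$ as group-like elements of $k^\Gamma$, obtaining a Hopf algebra morphism $k[\Gamma^*] \to k^\Gamma$ that is injective by linear independence of characters and bijective by the dimension count $|\Gamma^*| = |\Gamma|$ coming from the hypothesis on roots of unity. Your additional justification of $|\widehat{\Gamma}| = |\Gamma|$ (via cyclicity of $\mu_n(k)$) merely spells out what the paper leaves as ``by the assumption on $k$.''
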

\begin{proof}
Let $\Gamma^*$ be the ordinary abelian group of group morphisms $\Gamma \to k^\times$.  We prove that the algebraic group $\Gamma_k$ is isomorphic to $\Diag(\Gamma^*)$. The inclusion $\Gamma^* \subset k^\Gamma$ induces a $k$-linear morphism $u\colon k[\Gamma^*] \to k^\Gamma$. One checks that the subset $\Gamma^*$ of $k^\Gamma$ consists of group-like elements, and that $u$ is  a morphism of Hopf algebras. By linear independence of characters \cite[\S2.2, Lemma]{Waterhouse}, the morphism $u$ is injective. By the assumption on $k$, we have $|\Gamma^*| = |\Gamma|$, hence $\dim_k k[\Gamma^*] = \dim_k k^\Gamma $, so that $u$ is an isomorphism.
\end{proof}

\section{The Rost operation}
\label{sect:operation}

\subsection{The operation \texorpdfstring{$\rho$}{\textrho}}
From now on, the letter $p$ will denote a prime number, and $M$ a cycle module over $k$ such that $p\cdot M=0$.

Let $X$ be a variety with a $\mup$-action. We endow $\Au$ with the $\mup$-action of \rref{ex:Au}, and consider the $\mup$-invariant open subscheme of $X \times \Au$
\[
X^\circ = (X \times \Au) - (X \times \Au)^{\mup} = (X \times \Au) - (X^{\mup} \times 0).
\]
By \rref{lemma:nofix_free}, the $\mup$-action on $X^\circ$ is free. If $f\colon Y \to X$ is a $\mup$-equivariant morphism such that $f^{-1}(X^\mup)=Y^\mup$ (set-theoretically), there is an induced morphism $f^\circ \colon Y^\circ \to X^\circ$.

\begin{definition}
\label{def:rho}
Let $X$ be a variety with a $\mup$-action. By \dref{lemm:quotient-map}{lemm:quotient-map:closed}, the morphism $X^\mup = X^\mup \times 0 \to (X \times \Au)/\mup$ is a closed immersion, and its open complement is $X^\circ/\mup$ by \dref{prop:quotient}{prop:quotient:open}. We consider the connecting homomorphism
\[
\partial_X \colon A(X^\circ/\mup,M) \to A(X^\mup,M),
\]
the invertible $\Oc_{X^\circ/\mup}$-module $\Lch=\Lchi{X^\circ}$ (see \rref{def:Lchi}), the section $\sch=\sch_{X^\circ} \in H^0(X^\circ/\mup,\Gmp)$ (see \rref{def:schi}), and define a morphism
\[
\rho_X = \partial_X \circ \{\sch\} \circ c(-\Lch) \colon A(X^\circ/\mup,M) \to A(X^\mup,M).
\]
\end{definition}

The operation $\rho$ is compatible with restrictions to open subschemes:

\begin{proposition}
\label{prop:func_rho:open}
Let $u\colon U \to X$ be the immersion of a $\mup$-invariant open subscheme. Then  $(u^\mup)^* \circ \rho_X = \rho_U \circ (u^\circ/\mup)^*$.
\end{proposition}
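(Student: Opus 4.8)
The plan is to verify compatibility with open restriction for each of the three constituent operators in the definition $\rho_X = \partial_X \circ \{\sch\} \circ c(-\Lch)$, and then paste them together. The key structural observation is that $U^\circ = U \times \Au - (U^{\mup}\times 0)$ is the preimage of $X^\circ$ under the (flat) open immersion $u \times \id \colon U \times \Au \to X \times \Au$, and that $U^{\mup} = (u^{\mup})^{-1}(X^{\mup})$ as a $\mup$-invariant open subscheme of $X^{\mup}$. Since $\mup$ acts freely on both $X^\circ$ and $U^\circ$ (by \rref{lemma:nofix_free}), passing to quotients preserves the open-immersion relationship: $U^\circ/\mup$ is an open subscheme of $X^\circ/\mup$ by \dref{prop:quotient}{prop:quotient:open}, and the pullback $(u^\circ/\mup)^*$ is the flat pullback along this open immersion.

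First I would treat $c(-\Lch)$. The invertible module $\Lch = \Lchi{X^\circ}$ restricts to $\Lchi{U^\circ}$ under $(u^\circ/\mup)^*$ by the compatibility in \rref{def:Lchi} (i.e.\ \dref{lemm:torsor_cartesian}{lemm:torsor_cartesian:comp}), so the first Chern class commutes with the flat pullback by \dref{lemm:c1}{lemm:c1:flat}. Summing over the powers, $(u^\circ/\mup)^*\circ c(-\Lch) = c(-\Lch)\circ (u^\circ/\mup)^*$. Next, the section $\sch = \sch_{X^\circ}$ restricts to $\sch_{U^\circ}$ by the compatibility stated in \rref{def:schi}, so $\{\sch\}$ commutes with flat pullback by \dref{lemm:Gmn}{lemm:Gmn:flat}. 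Thus $(u^\circ/\mup)^* \circ \{\sch\}\circ c(-\Lch) = \{\sch\}\circ c(-\Lch)\circ (u^\circ/\mup)^*$ as maps out of $A(X^\circ/\mup,M)$.

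The main obstacle is the last step: commuting the open-restriction pullback through the connecting homomorphism $\partial$. Here one must compare the connecting map $\partial_X$ for the pair $(X^{\mup}\to (X\times\Au)/\mup)$ with $\partial_U$ for $(U^{\mup}\to (U\times\Au)/\mup)$. The issue is that $U^{\mup}$ is open in $X^{\mup}$, so the relevant pullback on the fixed-locus side is $(u^{\mup})^*$, whereas $U^\circ/\mup$ is open in $X^\circ/\mup$; this is precisely a compatibility of a connecting homomorphism with two \emph{open} restrictions fitting into a localisation diagram. I would extract this from the functoriality of the localisation long exact sequence in the theory of \cite{Rost-Chow}: one has closed subschemes $X^{\mup}\hookrightarrow (X\times\Au)/\mup$ and the open complement $X^\circ/\mup$, and restricting to the open $(U\times\Au)/\mup$ (which is open in $(X\times\Au)/\mup$ since $U\times\Au$ is $\mup$-invariant open in $X\times\Au$) induces exactly the square relating $\partial_X$ and $\partial_U$ via $(u^{\mup})^*$ and $(u^\circ/\mup)^*$. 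The commutativity $(u^{\mup})^*\circ\partial_X = \partial_U\circ(u^\circ/\mup)^*$ follows because flat (here open) pullback commutes with the differential $d$ on the cycle complexes and respects the decomposition of a cycle into its open part and its closed part, as in the computation in \rref{lemm:connecting}.

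Finally I would assemble the three identities: starting from the definition of $\rho_X$ and applying the three commutation relations in turn gives
\[
(u^{\mup})^* \circ \rho_X
= (u^{\mup})^* \circ \partial_X \circ \{\sch\} \circ c(-\Lch)
= \partial_U \circ \{\sch\} \circ c(-\Lch) \circ (u^\circ/\mup)^*
= \rho_U \circ (u^\circ/\mup)^*,
\]
which is the asserted identity. I expect the Chern-class and $\{\sch\}$ steps to be essentially immediate from the cited lemmas, with all the genuine content concentrated in the functoriality of $\partial$ under open restriction.
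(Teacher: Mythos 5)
Your proposal is correct and follows essentially the same route as the paper: restriction of $\Lch$ and $\sch$ plus \dref{lemm:c1}{lemm:c1:flat} and \dref{lemm:Gmn}{lemm:Gmn:flat} for the first two operators, and the compatibility of the connecting homomorphism with open (flat) pullback in Rost's localisation sequence --- which is exactly the paper's citation of \cite[(4.4.2)]{Rost-Chow} --- applied to the diagram of quotients furnished by \dref{prop:quotient}{prop:quotient:open}. The only cosmetic difference is that you sketch a direct verification of that connecting-map compatibility in the style of \rref{lemm:connecting}, where the paper simply invokes Rost's result.
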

\begin{proof}
By \dref{prop:quotient}{prop:quotient:open}, the diagram
\[ \xymatrix{
U^\mup\ar[r] \ar[d] & U\times \Au \ar[d] \ar[r] & (U\times \Au)/\mup \ar[d]\\ 
X^\mup \ar[r] & X \times \Au\ar[r] & (X\times \Au)/\mup
}\]
has cartesian squares, and vertical arrows are open immersions. We have, as morphisms $A(X^\circ/\mup,M) \to A(U^\mup,M)$,
\begin{align*}
(u^\mup)^* \circ \rho_X
&= (u^\mup)^* \circ \partial_X \circ \{\sch\} \circ c(-\Lch) \\ 
&= \partial_U \circ (u^\circ/\mup)^* \circ \{\sch\} \circ c(-\Lch) && \text{by \cite[(4.4.2)]{Rost-Chow}}\\
&= \partial_U \circ \{\sch\} \circ (u^\circ/\mup)^* \circ c(-\Lch)&&\text{by \dref{lemm:Gmn}{lemm:Gmn:flat}}\\
&= \partial_U \circ \{\sch\} \circ c(-\Lch) \circ (u^\circ/\mup)^* &&\text{by \dref{lemm:c1}{lemm:c1:flat}}\\
&= \rho_U \circ (u^\circ/\mup)^*.&&\qedhere
\end{align*}
\end{proof}

\begin{corollary}
\label{prop:rho_open}
Let $u\colon U \to X$ and $v\colon V\to X$ be two $\mup$-invariant open immersions such that $U \cup V = X$ and $U^\mup \cap V^\mup = \varnothing$. Then
\[
\rho_X = (u^\mup)_* \circ \rho_U \circ (u^\circ/\mup)^* + (v^\mup)_* \circ \rho_V \circ (v^\circ/\mup)^*.
\]
\end{corollary}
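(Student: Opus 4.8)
The plan is to reduce everything to the open-restriction compatibility already established in \rref{prop:func_rho:open}, the only genuine input being a clean disjoint decomposition of the target $X^\mup$. The key geometric observation is that the hypotheses force $X^\mup$ to split as a disjoint union of two clopen pieces. Indeed, since $U$ and $V$ are $\mup$-invariant open subschemes of $X$, we have $U^\mup = X^\mup \cap U$ and $V^\mup = X^\mup \cap V$ (as recalled after the definition of the fixed locus). These are open subschemes of $X^\mup$; they cover $X^\mup$ because $U \cup V = X$; and they are disjoint by the assumption $U^\mup \cap V^\mup = \varnothing$. Hence each of $u^\mup \colon U^\mup \to X^\mup$ and $v^\mup \colon V^\mup \to X^\mup$ is simultaneously an open and a closed immersion, identifying $U^\mup$ and $V^\mup$ with complementary clopen subschemes of $X^\mup$.

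From this decomposition I would deduce the cycle-level splitting $C(X^\mup,M) = C(U^\mup,M) \oplus C(V^\mup,M)$, under which the flat pullbacks $(u^\mup)^*$, $(v^\mup)^*$ are the two projections and the proper pushforwards $(u^\mup)_*$, $(v^\mup)_*$ are the two inclusions. Consequently, on $A(X^\mup,M)$ one has the identity
\[
(u^\mup)_* \circ (u^\mup)^* + (v^\mup)_* \circ (v^\mup)^* = \id_{A(X^\mup,M)}.
\]
This is a purely formal property of the homology $A(-,M)$ of a disjoint union in the cycle module formalism, requiring no further computation.

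Finally I would invoke \rref{prop:func_rho:open}, which gives $(u^\mup)^* \circ \rho_X = \rho_U \circ (u^\circ/\mup)^*$ and, identically, $(v^\mup)^* \circ \rho_X = \rho_V \circ (v^\circ/\mup)^*$. Substituting these into the displayed identity yields
\[
(u^\mup)_* \circ \rho_U \circ (u^\circ/\mup)^* + (v^\mup)_* \circ \rho_V \circ (v^\circ/\mup)^*
= \bigl((u^\mup)_* \circ (u^\mup)^* + (v^\mup)_* \circ (v^\mup)^*\bigr)\circ \rho_X = \rho_X,
\]
which is the claim. I do not anticipate a serious obstacle here: the whole argument is the observation that $\rho$ is a \emph{local} construction with values in $A(X^\mup,M)$, and that the hypotheses arrange $X^\mup$ precisely so that $U$ and $V$ see disjoint parts of it. The only point demanding a line of care is confirming that $U^\mup, V^\mup$ are clopen (so that both $(u^\mup)_*$ and $(u^\mup)^*$ are defined and mutually complementary), which is exactly what the two hypotheses $U\cup V = X$ and $U^\mup \cap V^\mup = \varnothing$ guarantee.
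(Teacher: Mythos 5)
Your proposal is correct and follows essentially the same route as the paper: the paper likewise notes that $X^\mup$ is the disjoint union of the open subschemes $U^\mup$ and $V^\mup$, so that $(u^\mup)_* \circ (u^\mup)^* + (v^\mup)_* \circ (v^\mup)^*$ is the identity on $A(X^\mup,M)$, and then applies \rref{prop:func_rho:open} to each summand. Your write-up merely spells out in more detail why the decomposition is clopen, which the paper leaves implicit.
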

\begin{proof}
Indeed $X^\mup$ is the disjoint union of the open subschemes $U^\mup$ and $V^\mup$, so that $(u^\mup)_* \circ (u^\mup)^* + (v^\mup)_* \circ (v^\mup)^*$ is the identity of $A(X^\mup,M)$, and by \rref{prop:func_rho:open}
\begin{align*}
\rho_X 
&= \big( (u^\mup)_* \circ (u^\mup)^* + (v^\mup)_* \circ (v^\mup)^*\big) \circ \rho_X  \\ 
&= (u^\mup)_* \circ \rho_U \circ (u^\circ/\mup)^* + (v^\mup)_* \circ \rho_V \circ (v^\circ/\mup)^*.\qedhere
\end{align*}
\end{proof}

The next proposition expresses the compatibility of $\rho$ with proper pushforwards.

\begin{proposition}
\label{prop:func_rho:proper} Let $f\colon Y \to X$ be a proper $\mup$-equivariant morphism. Then the following diagram commutes.
\[ \xymatrix{
A(Y^\circ/\mup,M)\ar[rr]^{(u/\mup)^*} \ar[d]_{\rho_Y} && A(Y'/
\mup,M) \ar[rr]^{(h/\mup)_*} && A(X^\circ/\mup,M) \ar[d]^{\rho_X} \\ 
A(Y^\mup,M) \ar[rrrr]^{(f^\mup)_*} &&&& A(X^\mup,M)
}\]
Here $Y'=(Y\times \Au) - (f^{-1}(X^\mup) \times 0)$, and $u\colon Y' \to Y^\circ, h \colon Y' \to X^\circ$ are the induced morphisms.

In particular if $f^{-1}(X^\mup)=Y^\mup$ (set-theoretically), then $(f^\mup)_* \circ \rho_Y =  \rho_X \circ (f^\circ/\mup)_*$.
\end{proposition}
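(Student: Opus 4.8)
The plan is as follows. The essential geometric observation is that $Y' = (f \times \id)^{-1}(X^\circ)$ as open subschemes of $Y \times \Au$, since $(f \times \id)^{-1}(X^\mup \times 0) = f^{-1}(X^\mup) \times 0$. Consequently $h$ is the base change of the proper morphism $f \times \id$ along the open immersion $X^\circ \to X \times \Au$, hence proper, and so is $h/\mup$ by \dref{lemm:quotient-map}{lemm:quotient-map:proper}; this is what makes the pushforward $(h/\mup)_*$ available. Passing from $Y^\circ$ to the smaller open $Y'$ is precisely what is needed in order to have a morphism landing in $X^\circ$, because the inclusion $Y^\mup \subset f^{-1}(X^\mup)$ may be strict. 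Writing $\rho_X = \partial_X \circ \{\sch\} \circ c(-\Lch)$ and similarly for $\rho_Y$, the strategy is to transport the pushforward $(h/\mup)_*$ and the pullback $(u/\mup)^*$ successively through each of the three building blocks $c(-\Lch)$, $\{\sch\}$ and $\partial$.

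The key structural input concerns the ambient variety $P = (Y \times \Au)/\mup$. It contains two nested closed subschemes, namely $C_1 = Y^\mup$ (the image of $Y^\mup \times 0$) and $C_2 = (f^{-1}(X^\mup) \times 0)/\mup$, with $C_1 \subset C_2$ because $Y^\mup \subset f^{-1}(X^\mup)$ (a closed immersion of quotients by \dref{lemm:quotient-map}{lemm:quotient-map:closed}). The complement of $C_1$ is $Y^\circ/\mup$ and the complement of $C_2$ is $Y'/\mup$; write $\partial_Y$ and $\partial'_Y$ for the respective connecting homomorphisms. Applied to $C_1 \subset C_2 \subset P$, \rref{lemm:connecting} yields $i_* \circ \partial_Y = \partial'_Y \circ (u/\mup)^*$, where $i \colon Y^\mup \to C_2$ is the inclusion. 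On the other hand the proper morphism $(f \times \id)/\mup \colon P \to (X \times \Au)/\mup$ carries $C_2$ into $X^\mup$ and restricts to $h/\mup$ on the complements; the compatibility of the boundary map with proper pushforward then gives $\partial_X \circ (h/\mup)_* = (\bar{f}/\mup)_* \circ \partial'_Y$, where $\bar{f} \colon f^{-1}(X^\mup) \to X^\mup$ is induced by $f$.

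With these two identities in hand the computation assembles as follows. Since $(h/\mup)^* \Lch = \Lch$ and $(h/\mup)^* \sch = \sch$ (functoriality in \rref{def:Lchi} and \rref{def:schi}), the operators $c(-\Lch)$ and $\{\sch\}$ commute with $(h/\mup)_*$ by \dref{lemm:c1}{lemm:c1:proper} and \dref{lemm:Gmn}{lemm:Gmn:pushforward}; likewise they commute with the flat pullback $(u/\mup)^*$ by \dref{lemm:c1}{lemm:c1:flat} and \dref{lemm:Gmn}{lemm:Gmn:flat}. Therefore
\[
\begin{aligned}
\rho_X \circ (h/\mup)_* \circ (u/\mup)^*
&= \partial_X \circ (h/\mup)_* \circ \{\sch\} \circ c(-\Lch) \circ (u/\mup)^* \\
&= (\bar{f}/\mup)_* \circ \partial'_Y \circ (u/\mup)^* \circ \{\sch\} \circ c(-\Lch) \\
&= (\bar{f}/\mup)_* \circ i_* \circ \rho_Y ,
\end{aligned}
\]
where the last equality uses the \rref{lemm:connecting} identity $\partial'_Y \circ (u/\mup)^* = i_* \circ \partial_Y$. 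Finally the composite $(\bar{f}/\mup) \circ i \colon Y^\mup \to C_2 \to X^\mup$ is nothing but $f^\mup$, so that $(\bar{f}/\mup)_* \circ i_* = (f^\mup)_*$, which gives the commutativity of the diagram.

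The step I expect to require the most care is the correct identification of the two connecting homomorphisms $\partial_Y$ and $\partial'_Y$ together with the nested closed subschemes $C_1 \subset C_2$, and the verification that the proper-pushforward compatibility of the boundary map genuinely applies here --- that is, that $h/\mup$ is proper and that $Y'/\mup$ is exactly the preimage of $X^\circ/\mup$ under $(f \times \id)/\mup$ (which follows from $Y' = (f \times \id)^{-1}(X^\circ)$ together with \dref{prop:quotient}{prop:quotient:open}). The \emph{in particular} statement is then immediate: if $f^{-1}(X^\mup) = Y^\mup$ set-theoretically, then $f^{-1}(X^\mup) \times 0$ and $Y^\mup \times 0$ have the same support, so $Y' = Y^\circ$ as open subschemes; hence $u = \id$ and $h = f^\circ$, and the diagram degenerates to $(f^\mup)_* \circ \rho_Y = \rho_X \circ (f^\circ/\mup)_*$.
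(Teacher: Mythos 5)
Your proof is correct and follows essentially the same route as the paper's: the paper introduces the same intermediate closed subscheme (defined there as the preimage of $X^\mup$ in $(Y\times \Au)/\mup$, which has the same support as your $C_2$), splits the diagram into the same two squares handled by \rref{lemm:connecting} and Rost's compatibility of the boundary map with proper pushforward, and then commutes $\{\sch\}$ and $c(-\Lch)$ past $(h/\mup)_*$ and $(u/\mup)^*$ exactly as you do. Your explicit verifications that $h/\mup$ is proper and that $Y'=(f\times\id)^{-1}(X^\circ)$, as well as the treatment of the ``in particular'' clause, are details the paper leaves implicit.
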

\begin{proof}
Let $Z$ be the inverse image in $(Y \times \Au)/\mup$ of the closed subscheme $X^\mup=X^\mup \times 0$ of $(X \times \Au)/\mup$. Its open complement is $Y'/\mup$ by \dref{prop:quotient}{prop:quotient:open}. Consider the diagram
\[ \xymatrix{
A(Y^\circ/\mup,M)\ar[r]^{(u/\mup)^*} \ar[d]_{\partial_Y} & A(Y'/\mup,M) \ar[d] \ar[r]^{(h/\mup)_*} & A(X^\circ/\mup,M) \ar[d]^{\partial_X} \\ 
A(Y^\mup,M) \ar[r] & A(Z,M) \ar[r] & A(X^\mup,M)
}\]
where vertical arrows are connecting homomorphisms, and lower horizontal ones are proper pushforwards (the lower composite is $(f^\mup)_*$). The square on the right commutes by \cite[(4.4.1)]{Rost-Chow} and so does the one on the left by \rref{lemm:connecting}. Thus, as morphisms $A(Y^\circ/\mup,M) \to A(X^\mup,M)$,
\begin{align*}
(f^\mup)_* \circ \rho_Y   
&= (f^\mup)_* \circ \partial_Y \circ \{\sch\} \circ c(-\Lch)\\ 
&= \partial_X \circ (h/\mup)_* \circ (u/\mup)^* \circ \{\sch\} \circ c(-\Lch) \\
&= \partial_X \circ \{\sch\} \circ (h/\mup)_* \circ (u/\mup)^* \circ c(-\Lch)&&\text{by \dref{lemm:Gmn}{lemm:Gmn:flat}, \dref{lemm:Gmn}{lemm:Gmn:pushforward}}\\
&= \partial_X \circ \{\sch\} \circ c(-\Lch) \circ (h/\mup)_* \circ (u/\mup)^* && \text{by \dref{lemm:c1}{lemm:c1:flat}, \dref{lemm:c1}{lemm:c1:proper}}\\
&= \rho_X \circ (h/\mup)_* \circ (u/\mup)^*.&&\qedhere
\end{align*}
\end{proof}

The next lemma describes the operation $\rho$ in the case of a trivial $\mup$-action.
\begin{lemma}
\label{lemm:trivial_action}
Let $X$ be a variety with trivial $\mup$-action. Then the composite
\[
A(X,M) \xrightarrow{a^*} A(X^\circ/\mup,M) \xrightarrow{\rho_X} A(X,M)
\]
is the identity, where $a \colon X^\circ/\mup = X \times ((\Au-0)/\mup)\to X$ is the first projection.
\end{lemma}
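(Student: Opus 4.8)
The plan is to unwind every definition in the special case of a trivial action, reducing the assertion to a single residue computation for the cycle module $M$ on $\Au$. Since the $\mup$-action on $X$ is trivial we have $X^\mup = X$, and by \rref{ex:Au} the quotient $\Au/\mup$ is $\Spec k[x^p]$; writing $y = x^p$, I would identify $(X \times \Au)/\mup$ with $X \times \Au$ (coordinate $y$ on the second factor), under which the closed subscheme $X^\mup = X^\mup \times 0$ becomes $X \times 0 = Z(y)$ and its open complement $X^\circ/\mup$ becomes $X \times \Gm$ with $\Gm = \Spec k[y,y^{-1}]$. With these identifications $a \colon X^\circ/\mup \to X$ is the projection $\pi \colon X \times \Gm \to X$, and $\partial_X \colon A(X \times \Gm, M) \to A(X,M)$ is the connecting homomorphism of the localisation sequence attached to the divisor $y = 0$.

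Next I would pin down the two remaining ingredients of $\rho_X$ by functoriality. By \rref{def:Lchi} applied to the equivariant projection $X^\circ \to \Au - 0$, the invertible module $\Lch = \Lchi{X^\circ}$ is the pullback of $\Lchi{\Au - 0}$, which is trivial by \rref{ex:Au}; hence $c_1(\Lch) = 0$ and therefore $c(-\Lch) = \id$. Similarly, by \rref{def:schi} and \rref{ex:Au}, the section $\sch = \sch_{X^\circ}$ is the image in $H^0(X \times \Gm, \Gmp)$ of the coordinate $y \in H^0(X \times \Gm, \Gm)$. As $y$ is a global lift of $\sch$ to $\Gm$, the endomorphism $\{\sch\}$ coincides with $\{y\}$ (see the discussion following \rref{def:Gmn}). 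Thus the composite in question becomes $\partial_X \circ \{y\} \circ \pi^*$, and the lemma reduces to the identity $\partial_X(\{y\} \cdot \pi^* \alpha) = \alpha$ for all $\alpha \in A(X,M)$.

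This last identity is the heart of the matter, and is an instance of Rost's homotopy computation \cite{Rost-Chow}. I would argue on the level of the cycle complex after reducing to $X$ integral. A class $\alpha$ is represented by classes $m \in M(k(w))$ at points $w$ of $X$; the flat pullback $\pi^*$ carries such a class to the class $m_{k(w)(y)}$ at the generic point of $\{w\} \times \Gm$, via the field extension $k(w) \hookrightarrow k(w)(y)$, and multiplication by $\{y\}$ yields $\{y\} \cdot m_{k(w)(y)}$. The only relevant component of $\partial_X$ is the residue at the point $w \times 0$ of the line $\{w\} \times \Au$, for which $y$ is a uniformiser with residue field $k(w)$; since $m_{k(w)(y)}$ is unramified at $y = 0$ with specialisation $m$, the residue rule for cycle modules gives $\partial^y(\{y\} \cdot m_{k(w)(y)}) = m$. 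No closed point of $\{w\} \times \Gm$ specialises to $w \times 0$, so this is the sole contribution, and assembling over all $w$ recovers $\alpha$. The main obstacle is precisely this step: one must verify that the differential of the cycle complex selects only the generic direction of each fibre and that the residue formula $\partial^y(\{y\} \cdot u) = \overline{u}$ applies to the unit $u = m_{k(w)(y)}$, which is exactly the content of Rost's localisation and homotopy machinery.
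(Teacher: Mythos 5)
Your proposal is correct and follows essentially the same route as the paper: both unwind the definitions via \rref{ex:Au} (triviality of $\Lch$, hence $c(-\Lch)=\id$, and $\sch$ lifting to the global section $x^p$) to reduce the claim to the identity $\partial_X \circ \{x^p\} \circ a^* = \id$ for the principal effective Cartier divisor $0 \subset \Au/\mup$ defined by $x^p$. The only difference is that the paper concludes by citing \cite[(4.5)]{Rost-Chow} for this identity, whereas you verify it by the explicit cycle-level residue computation, which is precisely the content of the cited result.
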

\begin{proof}
Let $k[x]$ be the coordinate ring of $\Au$. In view of \rref{ex:Au}, we have
\[
\rho_X = \partial_X \circ \{\sch\} \circ c(-\Lch) = \partial_X \circ \{x^p\}.
\]
Now $0 \to \Au/\mup$ is the principal effective Cartier divisor given by the global section $x^p$, so that by \cite[(4.5)]{Rost-Chow} the composite $\partial_X \circ \{x^p\} \circ a^*$ is the identity of $A(X,M)$.
\end{proof}

\subsection{The degree formula}

\begin{definition}
\label{def:varrho}
Let $X$ be a variety with a $\mup$-action. Letting $M=K_*/p$ be the modulo $p$ Milnor $K$-theory cycle module in \rref{def:rho}, we define
\[
\varrho(X) = \rho_X[X^\circ/\mup] \in A(X^\mup,K_0/p)=\CH(X^\mup)/p,
\]
and denote by $\varrho_n(X) \in \CH_n(X^\mup)/p$ its component of degree $n$, for every $n \in \mathbb{N}$.

\end{definition}

\begin{proposition}[Rost's degree formula]
\label{prop:df}
Let $f\colon Y \to X$ be a proper $\mup$-equivariant morphism with a degree (see \S\ref{def:deg}). Then
\[
(f^\mup)_*\varrho(Y) = \deg f \cdot \varrho(X) \in \CH(X^\mup)/p.
\]
\end{proposition}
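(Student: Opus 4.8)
The plan is to deduce the formula directly from the general pushforward compatibility established in \rref{prop:func_rho:proper}, by evaluating both operations on fundamental cycles. Recall the notation there: set $Y' = (Y\times\Au) - (f^{-1}(X^\mup)\times 0)$, which equals $(f\times\id)^{-1}(X^\circ)$, together with the open immersion $u\colon Y' \to Y^\circ$ and the morphism $h\colon Y' \to X^\circ$ induced by $f\times\id$. Since $f$ is proper, so is $h$ (it is the base change of the proper morphism $f\times\id$ along the open immersion $X^\circ \to X\times\Au$), and hence so is $h/\mup$ by \dref{lemm:quotient-map}{lemm:quotient-map:proper}. By \rref{prop:func_rho:proper} we have $(f^\mup)_* \circ \rho_Y = \rho_X \circ (h/\mup)_* \circ (u/\mup)^*$; applying this to the fundamental class $[Y^\circ/\mup]$ gives
\[
(f^\mup)_* \varrho(Y) = \rho_X\big((h/\mup)_* (u/\mup)^*[Y^\circ/\mup]\big).
\]
It therefore suffices to compute the cycle $(h/\mup)_* (u/\mup)^*[Y^\circ/\mup]$ and to invoke the $\Zz$-linearity of $\rho_X$.

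First, since $u$ is an open immersion, so is $u/\mup$ by \dref{prop:quotient}{prop:quotient:open}, whence $(u/\mup)^*[Y^\circ/\mup] = [Y'/\mup]$ by flat pullback of the fundamental cycle. The crux is then to prove
\[
(h/\mup)_*[Y'/\mup] = \deg f \cdot [X^\circ/\mup] \in \Zo(X^\circ/\mup).
\]
Granting this, the computation above yields $(f^\mup)_*\varrho(Y) = \rho_X(\deg f\cdot[X^\circ/\mup]) = \deg f\cdot\varrho(X)$, as desired.

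To establish the displayed cycle identity, I would first propagate the degree hypothesis $f_*[Y] = \deg f\cdot[X]$ along the constructions. Taking the product with $\Au$ gives $(f\times\id)_*[Y\times\Au] = \deg f\cdot[X\times\Au]$, and restricting to the open subscheme $X^\circ$ (using $Y' = (f\times\id)^{-1}(X^\circ)$ and the compatibility of proper pushforward with open pullback) yields $h_*[Y'] = \deg f\cdot[X^\circ]$. Now consider the commutative square formed by the quotient morphisms $\varphi_{Y'}\colon Y' \to Y'/\mup$ and $\varphi_{X^\circ}\colon X^\circ \to X^\circ/\mup$ over $h$ and $h/\mup$; by \dref{lemm:torsor_cartesian}{lemm:torsor_cartesian:diag} it is even cartesian, though I only need its commutativity. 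Since $\mup$ acts freely on $X^\circ$ (and hence on $Y'$), both $\varphi_{X^\circ}$ and $\varphi_{Y'}$ are finite flat of constant degree $|\mup| = p$, so that $(\varphi_{X^\circ})_*[X^\circ] = p\cdot[X^\circ/\mup]$ and $(\varphi_{Y'})_*[Y'] = p\cdot[Y'/\mup]$. Pushing $h_*[Y'] = \deg f\cdot[X^\circ]$ forward along $\varphi_{X^\circ}$ and using $(\varphi_{X^\circ})_* \circ h_* = (h/\mup)_* \circ (\varphi_{Y'})_*$, I obtain $p\cdot(h/\mup)_*[Y'/\mup] = p\cdot\deg f\cdot[X^\circ/\mup]$; since $\Zo(X^\circ/\mup)$ is a free abelian group, cancelling $p$ gives the claim.

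The main obstacle is precisely this descent of the degree to the quotients, which is delicate in characteristic $p$, where $\mup$ is infinitesimal and the morphisms $\varphi$ are purely inseparable: one cannot identify $(h/\mup)_*[Y'/\mup]$ with $h_*[Y']$ through a pullback of fundamental cycles, as inseparability multiplicities would intervene. Working instead with the pushforward identities $\varphi_*[\,\cdot\,] = p[\,\cdot\,]$, which hold for any $\mup$-torsor regardless of separability because the degree is the constant integer $|\mup| = p$, sidesteps this entirely. It is essential that the cancellation of $p$ be carried out at the level of the integral cycle group $\Zo(X^\circ/\mup)$, which is torsion-free, \emph{before} feeding the result into $\rho_X$: the target group $A(X^\mup, K_*/p)$ is itself $p$-torsion, so no such cancellation would be available there. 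A minor bookkeeping point is that the degree hypothesis presupposes the fundamental cycles $[X]$, $[X^\circ]$ to be understood componentwise, which does not affect the argument.
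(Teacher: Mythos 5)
Your proposal is correct and follows essentially the same route as the paper: both arguments reduce via \rref{prop:func_rho:proper} to showing that $(h/\mup)_*[Y'/\mup]=\deg f\cdot[X^\circ/\mup]$, and both first obtain $h_*[Y']=\deg f\cdot[X^\circ]$ by restricting the pushforward identity along the open immersion $X^\circ\to X\times\Au$. The only difference is the descent of the degree to the quotient: the paper pulls back along the flat morphism $\varphi_{X^\circ}$ across the cartesian square of \dref{lemm:torsor_cartesian}{lemm:torsor_cartesian:diag} and cancels using injectivity of $(\varphi_{X^\circ})^*$ (itself a consequence of $(\varphi_{X^\circ})_*\circ(\varphi_{X^\circ})^*=p\cdot\id$ and torsion-freeness of $\Zo(X^\circ/\mup)$), whereas you push forward along the quotient maps and cancel the factor $p$ directly --- the same two facts used in dual form, with the minor advantage that your version needs only commutativity, not cartesianness, of the square.
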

\begin{proof}
The diagram below has cartesian squares by \dref{lemm:torsor_cartesian}{lemm:torsor_cartesian:diag}
\[ \xymatrix{
Y'/\mup \ar[d]_{h/\mup} &&Y'\ar[r] \ar[d]_h \ar[ll]& Y \ar[d]^f \\ 
X^\circ/\mup && X^\circ \ar[r] \ar[ll]_{\varphi_{X^\circ}}& X
}\]
The morphism $h$ has degree $\deg f$ by \cite[Proposition 1.7, Lemma 1.7.1]{Ful-In-98}. The flat pullback $(\varphi_{X^\circ})^* \colon \Zo(X^\circ/\mup) \to \Zo(X^\circ)$ is injective, because $(\varphi_{X^\circ})_* \circ (\varphi_{X^\circ})^* = p\cdot \id$, and $\Zo(X^\circ/\mup)$ has no $p$-torsion. Applying once again \cite[Proposition 1.7, Lemma 1.7.1]{Ful-In-98}, we deduce that $h/\mup$ has degree $\deg f$. The statement then follows from \rref{prop:func_rho:proper}.
\end{proof}

\begin{corollary}
\label{cor:df}
Let $X$ be a projective variety without zero-dimensional connected component. If $\mup$ acts on $X$, then $\deg \varrho(X) =0 \in \Fp$.
\end{corollary}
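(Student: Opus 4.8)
The plan is to derive this from Rost's degree formula (\rref{prop:df}), applied to the structure morphism $f \colon X \to \Spec k$. This morphism is proper because $X$ is projective, and it is automatically $\mup$-equivariant once $\Spec k$ is endowed with its (necessarily trivial) $\mup$-action. Since the action on $\Spec k$ is trivial, its fixed locus is $(\Spec k)^\mup = \Spec k$, and the induced map on fixed loci $f^\mup \colon X^\mup \to \Spec k$ is simply the structure morphism of $X^\mup$. Consequently the pushforward $(f^\mup)_*$ on modulo $p$ Chow groups coincides, under the identification $\CH(\Spec k)/p = \Fp$, with the degree homomorphism $\deg$ of \S\ref{def:deg}. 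Thus \rref{prop:df} will read
\[
\deg \varrho(X) = (f^\mup)_* \varrho(X) = \deg f \cdot \varrho(\Spec k),
\]
and it will remain only to show that $\deg f = 0$.

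The crucial point --- and the sole place where the hypothesis is used --- is the computation $\deg f = 0$. First I would check that $f$ admits a degree in the sense of \S\ref{def:deg}: the target $\Spec k$ is integral of dimension zero, and since $X$ has no component of dimension $< 0$, the criterion there applies. To see that this degree actually vanishes, I would observe that a zero-dimensional connected component of $X$ is the same thing as a zero-dimensional irreducible component (such a point is isolated, hence open and closed in $X$), so that the hypothesis forces every irreducible component $V$ of $X$ to satisfy $\dim V \geq 1$. Writing $[X] = \sum_i m_i [V_i]$ over the irreducible components, each image $f(V_i) = \Spec k$ has dimension $0 < \dim V_i$, whence $f_*[V_i] = 0$ by the definition of proper pushforward; summing gives $f_*[X] = 0$, i.e.\ $\deg f = 0$.

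Combining the two steps yields $\deg \varrho(X) = \deg f \cdot \varrho(\Spec k) = 0$, as desired; note that the value of $\varrho(\Spec k)$ never enters, since it is multiplied by $0$. The main obstacle here is conceptual rather than computational: one must correctly identify $(f^\mup)_*$ with the degree homomorphism (using $(\Spec k)^\mup = \Spec k$) and translate the geometric hypothesis ``no zero-dimensional connected component'' into the vanishing of $\deg f$. Once these are in place, the statement is a direct invocation of \rref{prop:df}.
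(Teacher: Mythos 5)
Your proposal is correct and is exactly the paper's own argument: the paper proves this corollary by applying \rref{prop:df} to the proper $\mup$-equivariant structure morphism $X \to \Spec k$, which has degree zero precisely because the hypothesis forces every irreducible component of $X$ to have positive dimension. Your write-up merely makes explicit the details (existence of the degree, identification of $(f^\mup)_*$ with $\deg$) that the paper leaves implicit.
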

\begin{proof}
Apply \rref{prop:df} to the proper $\mup$-equivariant morphism $X \to \Spec k$, which has degree zero. 
\end{proof}

\subsection{Computation of the first term}

\begin{lemma}
\label{lemm:rho_rep}
Let $\Vc$ be a finite-dimensional $\mup$-representation over $k$ such that $\Vc_0=0$, and $Y$ an irreducible variety with trivial $\mup$-action. Consider the variety $V= \Spec(\Sym_k (\Vc^\vee))$ with its induced $\mup$-action (here $\Sym_k$ denotes the symmetric algebra). Then $\varrho(Y \times V) \in \CH(Y)/p$ is a non-zero multiple of $[Y]$.
\end{lemma}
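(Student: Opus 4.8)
The plan is to argue by induction on $n=\dim_k\Vc$, peeling off one weight space at a time, with the base case furnished by \rref{lemm:trivial_action}. First I record the shape of the fixed locus: since $(\Vc^\vee)_0=(\Vc_0)^\vee=0$, the augmentation ideal of $\Sym_k(\Vc^\vee)$ is generated by homogeneous elements of nonzero weight, so $V^\mup=\{0\}$ and hence $(Y\times V)^\mup=Y^\mup\times V^\mup=Y\times\{0\}=Y$; in particular $\varrho(Y\times V)\in\CH(Y)/p$. Writing $W=V\times\Au$, so that $(Y\times V)^\circ=Y\times(W-0)$ and $(Y\times V)^\circ/\mup=Y\times P$ with $P=(W-0)/\mup$, the quantity to evaluate is $\varrho(Y\times V)=\partial\circ\{\sch\}\circ c(-\Lch)[Y\times P]$. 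For $n=0$ the action on $Y$ is trivial, and \rref{lemm:trivial_action} gives $\rho_Y\circ a^*=\id$; since $[Y^\circ/\mup]=a^*[Y]$ this yields $\varrho(Y)=[Y]$, the unit multiple of $[Y]$ that seeds the induction.

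For the inductive step I decompose $\Vc=\Vc'\oplus L$ with $L$ a single weight line, so $V'=\Spec\Sym_k(\Vc'^\vee)$ is the $\mup$-invariant linear subspace of $V$ cut out by a coordinate $y$, which (again because $(\Vc^\vee)_0=0$) is homogeneous of some \emph{nonzero} weight $m\in\Zz/p$. The closed immersion $\iota\colon Y\times V'\to Y\times V$ is proper and $\mup$-equivariant with $\iota^{-1}((Y\times V)^\mup)=(Y\times V')^\mup$ and $\iota^\mup=\id_Y$, so \rref{prop:func_rho:proper} gives the clean identity $\rho_{Y\times V}\circ i_*=\rho_{Y\times V'}$, where $i\colon Y\times P'\to Y\times P$ is induced (with $P'=(W'-0)/\mup$, $W'=V'\times\Au$). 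The second ingredient is that $Y\times P'$ is the effective Cartier divisor in $Y\times P$ descended from the weight-$m$ section $y$: by \rref{lemm:descent_Cartier} its ideal is $\Lch^{\otimes-m}$, so $i_*[Y\times P']=c_1(\Lch^{\otimes m})[Y\times P]=m\,c_1(\Lch)[Y\times P]$ in $\CH(Y\times P)/p$, and $m$ is invertible mod $p$ since $p$ is prime. Substituting $c_1(\Lch)[Y\times P]=m^{-1}i_*[Y\times P']$ into the geometric series defining $c(-\Lch)$ and applying the projection formula \dref{lemm:c1}{lemm:c1:proper}, I obtain $c(-\Lch)[Y\times P]=[Y\times P]-m^{-1}i_*\big(c(-\Lch)[Y\times P']\big)$.

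It remains to apply $\partial\circ\{\sch\}$. The term coming from $[Y\times P]$ vanishes for dimension reasons: $\{\sch\}$ preserves cycle dimension and $\partial$ drops it by one, so this term lands in $\CH_{\dim Y+n}(Y)/p=0$ as $n\geq1$. For the surviving term, commuting $\{\sch\}$ past $i_*$ by \dref{lemm:Gmn}{lemm:Gmn:pushforward} and then reading the result through $\rho_{Y\times V}\circ i_*=\rho_{Y\times V'}$ gives $\partial\{\sch\}i_*c(-\Lch)[Y\times P']=\rho_{Y\times V'}[Y\times P']=\varrho(Y\times V')$, whence $\varrho(Y\times V)=-m^{-1}\varrho(Y\times V')$. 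Iterating, $\varrho(Y\times V)$ equals $(-1)^n(\prod_a m_a)^{-1}[Y]$, the product running over the nonzero weights of the coordinates; this coefficient is a unit in $\Fp$, so $\varrho(Y\times V)$ is a non-zero multiple of $[Y]$. The step I expect to demand the most care is the Cartier-divisor computation with the correct weight together with the vanishing of the leading term — equivalently, the assertion that the whole class stays concentrated in top degree — since this is precisely what forces the recursion to multiply by the invertible scalar $-m^{-1}$ instead of generating uncontrolled lower-dimensional corrections.
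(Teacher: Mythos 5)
Your proof is correct and takes essentially the same route as the paper's: induction on $\dim_k \Vc$ splitting off a one-dimensional weight space, identifying the descended coordinate hyperplane as a Cartier divisor with ideal a power of $\Lch$ via \rref{lemm:descent_Cartier}, invoking \rref{prop:func_rho:proper} together with the projection formula, and killing the leading term because $\CH_{\dim Y+n}(Y)/p=0$. The only difference is bookkeeping --- the paper computes $\varrho(Y\times W)=r\cdot\varrho(Y\times V)$ by functoriality and solves for $\varrho(Y\times V)$, while you expand $c(-\Lch)[Y\times P]$ and substitute, which is the same manipulation run in the opposite direction; your explicit coefficient $(-1)^n(\prod_a m_a)^{-1}$ also agrees with \rref{rem:weight}.
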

\begin{proof}
We proceed by induction on the dimension $n$ of $\Vc$. The statement follows from \rref{lemm:trivial_action} if $n=0$. If $n >0$, we may decompose the $\mup$-representation $\Vc$ as $\Wc \oplus \Lc$, with $\Lc$ one-dimensional. Since $\Vc_0 = 0$, we must have $\Lc=\Lc_r$ for some $r\in \Zz/p-\{0\}$. Let $W=\Spec(\Sym_k (\Wc^\vee))$. The closed immersion $i\colon Y \times W \to Y \times V$ is a principal effective Cartier divisor given by a section of weight $-r$ (see \S\ref{def:weight}), hence the same is true for the closed immersion $i^\circ$. By \rref{lemm:descent_Cartier}, the closed immersion $i^\circ/\mup$ is an effective Cartier divisor whose ideal is isomorphic to $\Lch^{\otimes r}$. Thus we have in $\CH(Y)/p$,
\begin{align*}
\varrho(Y \times W)&= (i^\mup)_* \circ \rho_{Y \times W}[(Y \times W)^\circ/\mup]&& \text{ since $i^\mup=\id_Y$}\\
&= \rho_{Y\times V} \circ (i^\circ/\mup)_*[(Y \times W)^\circ/\mup]&& \text{ by \rref{prop:func_rho:proper}}\\
&= \rho_{Y \times V} \circ c_1(\Lch^{\otimes -r})[(Y \times V)^\circ/\mup] && \text{ by \cite[Theorem 3.2(f)]{Ful-In-98}}\\
&= -r \cdot \rho_{Y \times V} \circ c_1(\Lch)[(Y \times V)^\circ/\mup]&& \text{ by \cite[Proposition 2.5(e)]{Ful-In-98}}\\
&= r \cdot (\varrho(Y \times V) - \varrho_{\dim Y +n}(Y \times V)) && \text{ since $c(-\Lch) = \id - c(-\Lch) \circ c_1(\Lch)$}\\
&= r \cdot \varrho(Y \times V),&&
\end{align*}
since $\varrho_{\dim Y + n}(Y \times V) \in \CH_{\dim Y+ n}(Y)/p=0$. We conclude by induction.
\end{proof}

\begin{lemma}
\label{lemm:normal}
Let $X$ be an equidimensional variety with a $\mup$-action. Let $N$ be the normal cone of the closed immersion $X^\mup \to X$. Then $X^\mup = N^\mup$ (see \rref{lemm:action_char}), and
\[
\varrho(X) = \varrho(N) \in \CH(X^\mup)/p.
\]
\end{lemma}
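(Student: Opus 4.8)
The plan is to deform $X$ to its normal cone $N$ and to show that $\varrho$ is insensitive to this deformation. Form the deformation variety $D$ of the closed immersion $X^\mup \to X$ as in \S\ref{sect:deformation}. As recalled just before \rref{lemm:action_char}, $D$ carries a $\mup$-action for which the parameter $t$ has weight $0$, the flat morphism $D \to \Au$ is $\mup$-equivariant (for the trivial action on $\Au$), its fibre over $0$ is $N$ and its restriction to $\Au-0$ is $X \times (\Au-0)$; moreover $D$ and $N$ are equidimensional. By \rref{lemm:action_char} we have $D^\mup = X^\mup \times \Au$ and $N^\mup = X^\mup$. I will read off $\varrho(X)$ from the class $\varrho(D) \in \CH(D^\mup)/p = \CH(X^\mup \times \Au)/p$ over the open part, and $\varrho(N)$ from its specialisation to the special fibre; since the two determinations must coincide, this gives $\varrho(X)=\varrho(N)$.

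For the open part, write $\mathrm{pr}\colon X^\mup \times \Au \to X^\mup$ for the projection, so that $\varrho(D) = \mathrm{pr}^*\gamma$ for a unique $\gamma \in \CH(X^\mup)/p$ by homotopy invariance \cite[Theorem 3.3]{Ful-In-98}. Let $j\colon X \times (\Au-0) \to D$ be the $\mup$-invariant open immersion; by \rref{prop:func_rho:open} we have $(j^\mup)^*\varrho(D) = \varrho(X \times (\Au-0))$. The projection $g\colon X \times (\Au-0) \to X$ is flat and $\mup$-equivariant for the trivial action on the second factor, and I would establish the base-change identity $\varrho(X \times (\Au-0)) = (g^\mup)^*\varrho(X)$ by repeating the proof of \rref{prop:func_rho:open} verbatim: the connecting homomorphism commutes with the flat pullback $g^*$ by \cite[(4.4.2)]{Rost-Chow}, the operator $\{\sch\}$ by \dref{lemm:Gmn}{lemm:Gmn:flat}, and $c(-\Lch)$ by \dref{lemm:c1}{lemm:c1:flat}, while $g^*$ sends the fundamental class to the fundamental class. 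Comparing the two expressions for $(j^\mup)^*\varrho(D)$ and using that the pullback along $X^\mup\times(\Au-0) \to X^\mup$ is injective (its kernel, by the localisation sequence, is the image of the push-forward from $X^\mup\times 0$, which vanishes), I obtain $\gamma = \varrho(X)$, that is $\varrho(D) = \mathrm{pr}^*\varrho(X)$.

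For the special fibre, observe that $N^\mup = X^\mup \times 0$ is the principal Cartier divisor $Z(t)$ of $D^\mup$, so there is a specialisation (Gysin) map $i_0^!\colon \CH(X^\mup\times\Au)/p \to \CH(X^\mup)/p$ with $i_0^!\circ\mathrm{pr}^* = \id$, whence $i_0^!\varrho(D) = \varrho(X)$. The same divisor $Z(t)$ lives inside $D^\circ/\mup$, with open complement $(X\times(\Au-0))^\circ/\mup$ and fibre $N^\circ/\mup$, giving a specialisation $\sigma_t = \partial_t\circ\{t\}\circ\mathrm{res}$ (here $\mathrm{res}$ restricts to $D(t)$ and $\partial_t$ is the localisation boundary) with $\sigma_t[D^\circ/\mup] = [N^\circ/\mup]$. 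Granting the compatibility $i_0^!\circ\rho_D = \rho_N\circ\sigma_t$, I would then compute $i_0^!\varrho(D) = \rho_N(\sigma_t[D^\circ/\mup]) = \rho_N[N^\circ/\mup] = \varrho(N)$, and comparison with the previous paragraph yields $\varrho(X) = \varrho(N)$.

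The main obstacle is this last compatibility $i_0^!\circ\rho_D = \rho_N\circ\sigma_t$ between the two specialisations and the Rost operation. Since both $\rho = \partial\circ\{\sch\}\circ c(-\Lch)$ and $\sigma_t = \partial_t\circ\{t\}\circ\mathrm{res}$ are assembled from a localisation boundary, a symbol operator and (for $\rho$) a first Chern class, proving it requires a diagram chase across several localisation sequences: the restriction $\mathrm{res}$ is absorbed by \rref{prop:func_rho:open}, the two boundaries are interchanged by means of \rref{lemm:connecting} together with \dref{lemm:c1}{lemm:c1:conn} and \dref{lemm:Gmn}{lemm:Gmn:d}, and the symbols $\{t\}$ and $\{\sch\}$ are commuted past $c(-\Lch)$ and past each other using \dref{lemm:c1}{lemm:c1:Gm} and \dref{lemm:Gmn}{lemm:Gmn:Gm}. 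The delicate point is the bookkeeping of signs arising from the graded-commutativity of the symbols (the weight-$0$ parameter $t$ anticommutes both with $\{\sch\}$ and with the differential); I expect the two sign changes to cancel, and this verification to be where most of the work lies.
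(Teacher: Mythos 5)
Your strategy---deform to the normal cone and compare two specialisations---is the same as the paper's, and the peripheral steps are sound: the flat base-change identity $\varrho(X\times(\Auc-0))=(g^\mup)^*\varrho(X)$ does follow by rerunning the proof of \rref{prop:func_rho:open}, and the identity $\sigma_t[D^\circ/\mup]=[N^\circ/\mup]$ is proved in the paper essentially as you indicate, via \cite[(4.3.1), (4.4.2), (4.5)]{Rost-Chow} and \cite[Lemma 1.7.1]{Ful-In-98}. The genuine gap is the statement you ``grant'': $i_0^!\circ\rho_D=\rho_N\circ\sigma_t$. This is not sign bookkeeping that can be settled with the lemmas you cite; it is the whole content of the lemma. \rref{lemm:connecting} concerns \emph{nested} closed immersions $Z\subset Y\subset X$ and produces a \emph{commuting} square, whereas the two connecting homomorphisms you need to interchange are attached to two \emph{non-nested} closed subschemes of the ambient space $(D\times\Au)/\mup$: the fixed locus $D^\mup=X^\mup\times\Auc$ (which governs $\rho_D$ and, after restriction, $i_0^!$) and the special fibre $(N\times\Au)/\mup$ over $0\in\Auc$ (which governs $\sigma_t$ and $\rho_N$). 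The square formed by the four resulting boundary maps \emph{anticommutes}, and this anticommutativity cannot be extracted from \rref{lemm:connecting}, \dref{lemm:c1}{lemm:c1:conn} or \dref{lemm:Gmn}{lemm:Gmn:d}; the paper has to invoke Rost's double-complex result \cite[(11.6)]{Rost-Chow} for precisely this square (the square \rref{eq:anti} in the paper). Your expectation that two signs cancel is correct---the anticommutativity of that square cancels against $\partial\circ\{t\}=-\{t\}\circ\partial$ from \cite[(4.3.2)]{Rost-Chow}---but without \cite[(11.6)]{Rost-Chow} the interchange itself is unavailable, so the central step of your argument is missing, not merely unchecked.

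A secondary remark: the paper reaches the conclusion more directly, without introducing $\varrho(D)$, homotopy invariance, or the Gysin map $i_0^!$. It inserts the identity $\id=\Delta\circ\{t\}\circ(q|_{X^\mup})^*$ of \cite[(4.5)]{Rost-Chow} in front of $\rho_X$, pushes the specialisation through the three factors of $\rho_X$ one at a time, arrives at $\rho_X=\rho_N\circ\delta\circ\{t\}\circ(q|_{X^\circ/\mup})^*$, and evaluates on $[X^\circ/\mup]$. Your detour through $\CH(X^\mup\times\Au)/p$ is workable in principle, but it adds steps (injectivity of restriction to the punctured line, $i_0^!\circ\mathrm{pr}^*=\id$) while still requiring exactly the same anticommutativity input at the crux.
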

\begin{proof}
The affine line will be denoted by $\Au$ when it is endowed with the $\mup$-action of \rref{ex:Au}, and by $\Auc$ when $\mup$ acts trivially. Let $D$ be the deformation variety of the closed immersion $X^\mup \to X$ and consider the commutative diagram (see \S\ref{sect:deformation}) 
\[ \xymatrix{
X^\mup \ar[r] \ar[d] & N^\mup \ar[d] \ar[r] & N \times \Au \ar[d] \ar[r] &(N \times \Au)/\mup \ar[d] \ar[r] & 0 \ar[d]\\ 
X^\mup \times \Au\ar[r] & D^\mup \ar[r] & D \times \Au \ar[r] &(D \times \Au)/\mup \ar[r] & \Auc
}\]
The observation at the end of \S\ref{sect:quotients} shows that the square on the right is cartesian, and it follows easily that the remaining squares are also cartesian. The left horizontal arrows are isomorphisms by \rref{lemm:action_char}. Thus by \cite[(11.6)]{Rost-Chow} the square
\begin{equation}
\label{eq:anti}
\begin{aligned}
\xymatrix{
A((X^\circ/\mup) \times (\Auc -0),K_*/p) \ar[rr]^{\delta} \ar[d]_{\partial_{X \times (\Auc-0)}}  && A(N^\circ/\mup,K_*/p) \ar[d]^{\partial_N} \\ 
A(X^\mup \times (\Auc -0),K_*/p) \ar[rr]^{\Delta} &&  A(X^\mup,K_*/p)
 }
\end{aligned}
\end{equation}
anticommutes, where $\delta$, resp.\ $\Delta$, is the connecting homomorphism associated with the closed immersion $N^\circ/\mup \to D^\circ/\mup$, resp.\ $X^\mup=X^\mup \times 0 \to X^\mup \times \Auc$.

Denote by $q \colon (X \times \Au)/\mup\times (\Auc -0) \to (X \times \Au)/\mup$ the first projection and by $t\in H^0(\Auc,\Gm)$ the section induced by the identification $\Auc = \Spec k[t]$. We have, as morphisms $A(X^\circ/\mup,K_*/p) \to A(X^\mup,K_*/p)$,
\begin{align*}
\rho_X 
&= \partial_X \circ \{\sch\} \circ c(-\Lch)\\ 
&= \Delta \circ \{t\} \circ (q|_{X^\mup})^* \circ \partial_X \circ \{\sch\} \circ c(-\Lch)&&\text{by \cite[(4.5)]{Rost-Chow}}\\
&= \Delta \circ \{t\} \circ \partial_{X \times (\Auc -0)} \circ (q|_{X^\circ/\mup})^* \circ \{\sch\} \circ c(-\Lch)&&\text{by \cite[(4.4.2)]{Rost-Chow}}\\
&= -\Delta \circ \partial_{X \times (\Auc -0)} \circ \{t\} \circ (q|_{X^\circ/\mup})^* \circ \{\sch\} \circ c(-\Lch)&&\text{by \cite[(4.3.2)]{Rost-Chow}}\\
&= \partial_N \circ \delta \circ \{t\} \circ (q|_{X^\circ/\mup})^* \circ \{\sch\} \circ c(-\Lch)&&\text{as \eqref{eq:anti} anticommutes}\\
&= \partial_N \circ \{\sch\} \circ \delta \circ \{t\}\circ (q|_{X^\circ/\mup})^* \circ c(-\Lch) &&\text{by \rref{lemm:Gmn}}\\
&= \partial_N \circ \{\sch\} \circ c(-\Lch) \circ \delta \circ \{t\}\circ (q|_{X^\circ/\mup})^*&&\text{by \rref{lemm:c1}}\\
&=\rho_N \circ \delta \circ \{t\}\circ (q|_{X^\circ/\mup})^*&&
\end{align*}

The morphism $D^\circ \to \Auc$ is flat and factors through the faithfully flat morphism $D^\circ \to D^\circ /\mup$. Thus the morphism $f \colon D^\circ/\mup \to \Auc$ is flat \cite[(2.2.11.iv)]{ega-4-2}. Consider the connecting homomorphism
\[
\varsigma \colon A(\Auc -0,K_*/p) \to A(0,K_*/p)
\]
associated to the closed point $0 \to \Auc$. Using repeatedly \cite[Lemma 1.7.1]{Ful-In-98}, we have in $\CH(N^\circ/\mup)/p$
\begin{align*}
\delta \circ \{t\} \circ (q|_{X^\circ/\mup})^* [X^\circ/\mup]
&= \delta \circ \{t\} [X^\circ/\mup \times (\Auc-0)]\\ 
&= \delta \circ \{t\} \circ  (f|_{\Auc -0})^*[\Auc-0]\\
&= \delta \circ (f|_{\Auc -0})^*\circ \{t\}[\Auc-0] && \text{ by \cite[(4.3.1)]{Rost-Chow}}\\
&= (f|_0)^* \circ \varsigma \circ \{t\}[\Auc-0] && \text{ by \cite[(4.4.2)]{Rost-Chow}}\\
&= (f|_0)^*[0]=[N^\circ /\mup]&& \text{ by \cite[(4.5)]{Rost-Chow}.}
\end{align*}
Combining these two computations, we obtain in $\CH(X^\mup)/p$,
\[
\varrho(X) = \rho_X [X^\circ /\mup] = \rho_N \circ \delta \circ \{t\}\circ (q|_{X^\circ/\mup})^* [X^\circ /\mup] = \rho_N[N^\circ/\mup]=\varrho(N).\qedhere
\]
\end{proof}

\begin{proposition}
\label{prop:nonvanishing}
Let $X$ be a variety with a $\mup$-action. Assume that $X^\mup$ is irreducible of dimension $n$ and that $X$ is regular at the generic point of $X^\mup$. Then
\[
\varrho_n(X) \neq 0 \in \CH_n(X^\mup)/p=\Fp.
\]
\end{proposition}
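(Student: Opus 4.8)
The plan is to express $\varrho(X)$ through the normal cone of $X^{\mup}\hookrightarrow X$ by means of \rref{lemm:normal}, to use the regularity hypothesis to identify this cone with a representation bundle, and then to conclude with \rref{lemm:rho_rep}. Throughout, write $\eta$ for the generic point of $X^{\mup}$; since $X^{\mup}$ is irreducible of dimension $n$, the group $\CH_n(X^{\mup})/p$ is generated by $[X^{\mup}]$, and it suffices to prove that $\varrho_n(X)$ is a nonzero multiple of this class.

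First I would record that $\varrho$ is compatible with restriction to a $\mup$-invariant open subscheme $u\colon U\to X$: applying \rref{prop:func_rho:open} to the fundamental class and using $(u^\circ/\mup)^*[X^\circ/\mup]=[U^\circ/\mup]$ gives $(u^{\mup})^*\varrho(X)=\varrho(U)$, and since $(u^{\mup})^*$ is a flat pullback of relative dimension zero it respects the dimension grading and carries $[X^{\mup}]$ to $[U^{\mup}]$. Hence, as long as $\eta\in U^{\mup}$, the class $\varrho_n(X)$ vanishes if and only if $\varrho_n(U)$ does, so I may shrink $X$ freely around $\eta$. The closed subset $X^{\mup}\cap \mathrm{Sing}(X)$ lies inside $X^{\mup}$ and is therefore automatically $\mup$-invariant, and it does not contain $\eta$; removing it, I may assume that $X$ is regular at every point of $X^{\mup}$. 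Deleting in addition the (invariant) union of the irreducible components of $X$ other than the one through $\eta$ and the non-reduced locus, I may further assume that $X$ is integral, in particular equidimensional.

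Now \rref{lemm:normal} yields $\varrho(X)=\varrho(N)$ in $\CH(X^{\mup})/p$, where $N$ is the normal cone of $X^{\mup}\hookrightarrow X$ and $N^{\mup}=X^{\mup}$. Since $X$ is regular at each point of $X^{\mup}$, the pointwise content of the proof of \rref{lemm:fixed_reg} shows that $X^{\mup}$ is regular, so $X^{\mup}\hookrightarrow X$ is a regular closed immersion and $N$ is the total space of the normal bundle, a $\mup$-equivariant vector bundle over $X^{\mup}$. Writing $N=\Spec_{X^{\mup}}(\Sym \Nc)$ for the associated $\Char{\mup}$-graded locally free conormal module $\Nc$, the equality $N^{\mup}=X^{\mup}$ forces the fixed locus to be exactly the zero section, which by the description of the fixed locus means that the weight-$0$ summand $\Nc_0$ vanishes. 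Each homogeneous summand $\Nc_g$ being locally free, over a small enough open neighborhood $W$ of $\eta$ in $X^{\mup}$ every $\Nc_g$ is free; taking the $\mup$-invariant open $U=X\smallsetminus(X^{\mup}\smallsetminus W)$, so that $U^{\mup}=W$, the restricted bundle becomes a product $U^{\mup}\times V$ with $V$ a $\mup$-representation satisfying $V_0=0$ because $\Nc_0=0$. Applying \rref{lemm:rho_rep} with $Y=U^{\mup}$ (irreducible of dimension $n$, with trivial action) then shows that $\varrho(U^{\mup}\times V)=\varrho(U)$ is a nonzero multiple of $[U^{\mup}]\in\CH_n(U^{\mup})/p$; its degree-$n$ component $\varrho_n(U)$ is thus nonzero, and by the first reduction so is $\varrho_n(X)$.

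I expect the main obstacle to be organising the successive shrinkings so that they stay $\mup$-equivariant — the decisive simplification being that the locus one must remove to achieve regularity along $X^{\mup}$ already sits inside $X^{\mup}$ and is therefore invariant for free — together with keeping careful track of the equivariant grading on the normal bundle, so that the vanishing of $\Nc_0$ matches exactly the hypothesis $\Vc_0=0$ of \rref{lemm:rho_rep}. Everything else is a matter of assembling the compatibilities already proved in \rref{prop:func_rho:open}, \rref{lemm:normal} and \rref{lemm:action_char}.
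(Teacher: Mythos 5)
Your proof follows essentially the same route as the paper's: shrink $X$ along $\mup$-invariant open subschemes, using \rref{prop:func_rho:open} to transport $\varrho_n$; reduce to the situation where $X^\mup\to X$ is a regular closed immersion whose normal bundle is the trivial bundle of a representation with vanishing weight-zero part (via the regularity of the fixed locus as in \rref{lemm:fixed_reg} and the identification $N^\mup=X^\mup$ of \rref{lemm:action_char}); and conclude by combining \rref{lemm:normal} with \rref{lemm:rho_rep}. One difference is a point in your favour: you noticed that \rref{lemm:normal} is stated for \emph{equidimensional} $X$ and inserted a reduction to $X$ integral, whereas the paper's own proof only reduces to $X$ regular, which by itself does not give equidimensionality; your extra step genuinely tightens the argument.

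The one real weakness is the parenthetical ``(invariant)'' in your second deletion. That the union of the irreducible components of $X$ other than the one through $\eta$, and the non-reduced locus, are $\mup$-invariant is not formal for group-scheme actions: in characteristic $\neq p$ the group $\mup$ is \'etale and can perfectly well permute irreducible components, so an argument is required. Your first deletion is fine for the reason you give (a closed subscheme of the fixed locus is automatically invariant, so its open complement is invariant), but the second deletion concerns loci away from $X^\mup$, where this shortcut is unavailable. The claim is in fact true in your setting --- in characteristic $p$ the group $\mup$ is infinitesimal, so every open subscheme is invariant, and in characteristic $\neq p$ one can show that the automorphism $(g,x)\mapsto(g,gx)$ of $\mup\times X$ fixes $\mup\times X^\mup$ pointwise and therefore fixes each irreducible component lying over the component through $\eta$, such components being unique through points of $\mup\times X^\mup$ by regularity --- but you must either supply such an argument or, more simply, use the paper's own device: apply \dref{prop:open_action}{prop:open_action:sat} to the (not necessarily invariant) open subscheme you wish to pass to, and invoke \tref{prop:open_action}{prop:open_action:sat}{prop:open_action:sat:inv} with the invariant closed subscheme given by a closed point of $U\cap X^\mup$ to see that the resulting invariant open subscheme still meets $X^\mup$. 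With that repair your argument is complete.
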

\begin{proof}
Observe that while proving the statement, we may replace $X$ by any $\mup$-invariant open subscheme $U$ of $X$ meeting $X^\mup$. Indeed the restriction morphism $\CH_n(X^\mup)/p \to \CH_n(U^\mup)/p$ sends $\varrho_n(X)$ to $\varrho_n(U)$ by \rref{prop:func_rho:open}.

Let $R$ be a regular open subscheme of $X$ meeting $X^\mup$, and $R'$ the  maximal $\mup$-invariant open subscheme of $X$ contained in $R$. Then $R'$ meets $X^\mup$ by \tref{prop:open_action}{prop:open_action:sat}{prop:open_action:sat:inv}. Replacing $X$ with $R'$, we may assume that $X$ is regular. Then $X^\mup$ is regular by \rref{lemm:fixed_reg}. Let $q \colon N \to X^\mup$ be the normal bundle of the regular closed immersion $X^\mup \to X$, and $\Nc$ its $\Oc_{X^\mup}$-module of sections. The $\mup$-action on $X$ induces a decomposition $\Nc = \bigoplus_{i \in \Zz/p} \Nc_i$ as $\Oc_{X^\mup}$-modules. Further shrinking $X$, we may assume that each $\Oc_{X^\mup}$-module $\Nc_i$ is free. Then there is a $\mup$-representation $\Vc$ over $k$ and a $\mup$-equivariant isomorphism $\Nc = \Vc \otimes_k \Oc_{X^\mup}$. Since $N^\mup = X^\mup$ by \rref{lemm:action_char}, we have $\Nc_0=\Vc_0=0$. By \rref{lemm:rho_rep}, there is an element $u \in (\Fp)^\times$ such that $\varrho(N) = u \cdot [X^\mup]$ in $\CH(X^\mup)/p$. We conclude using \rref{lemm:normal}.
\end{proof}

\begin{remark}
\label{rem:weight}
The element $u \in (\Fp)^\times$ such that $\varrho_n(X) = u\cdot[X^\mup]$ in \rref{prop:nonvanishing} can be explicitly computed in terms of the $\mup$-action on the normal bundle to $X^\mup$ in $X$. Indeed, its fiber at the generic point of $X^\mup$ is a $\mup$-representation $\Vc$ over the field $K=k(X^\mup)$, and it follows from the proof of \rref{lemm:rho_rep} that
\[
u = \prod_{i \in \Zz/p-\{0\}} i^{-\dim_K \Vc_i} \in (\Fp)^\times.
\]
\end{remark}

\section{Equivariance of the operation}
\label{sect:equiv}

\subsection{Equivariant Cycles}
When $G$ is a finite constant group acting on a variety $X$, one may define the set of $G$-equivariant cycles as the equaliser of the two pullback maps $\Zo(X) \to \Zo(G \times X)$. This definition is however inappropriate when $G$ is an arbitrary algebraic group (consider the action of $\mup$ on itself in characteristic $p$), and we will instead use the following

\begin{definition}
Let $G$ be an algebraic group acting on a variety $X$. We denote by $\Zo_G(X) \subset \Zo(X)$ the subgroup generated by classes of equidimensional $G$-invariant closed subschemes of $X$.
\end{definition}

\begin{lemma}
\label{lemm:equ_cover}
Let $G$ be an algebraic group acting on a variety $X$, and $U_i$ a family of $G$-invariant open subschemes covering $X$. Then a cycle in $\Zo(X)$ belongs to $\Zo_G(X)$ if and only if its restriction to $\Zo(U_i)$ belongs to $\Zo_G(U_i)$ for every $i$.
\end{lemma}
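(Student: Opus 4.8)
The statement asserts that membership in $\Zo_G$ can be tested Zariski-locally on the $G$-invariant opens $U_i$. The \emph{only if} direction is immediate: the restriction $\Zo(X)\to\Zo(U_i)$ is the flat pullback along the open immersion, so it sends the class of an equidimensional $G$-invariant closed subscheme $Z$ to $[Z\cap U_i]$; and $Z\cap U_i$ is again closed in $U_i$, $G$-invariant (an intersection of $G$-invariant subschemes), and equidimensional (being open in $Z$). Hence $\Zo_G(X)$ is carried into each $\Zo_G(U_i)$.

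For the \emph{if} direction the plan is first to obtain an intrinsic description of $\Zo_G$. I would call a reduced $G$-invariant closed subscheme $Y\subseteq X$ a \emph{$G$-prime} if it is equidimensional and minimal with this property, i.e.\ contains no proper nonempty reduced $G$-invariant closed subscheme of the same dimension. The first task is to show that $\Zo_G(X)$ is the free abelian group on the classes $[Y]$ of $G$-prime subschemes: every equidimensional $G$-invariant closed subscheme $Z$ has its irreducible components permuted by $G$ with constant multiplicities along each block, and grouping them exhibits $[Z]$ as a $\Zz$-combination of $G$-prime classes; conversely distinct $G$-prime subschemes share no irreducible component, by uniqueness of the minimal reduced $G$-invariant equidimensional subscheme through a given component. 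Concretely this yields the criterion: a cycle $\sum_W n_W[W]\in\Zo(X)$ lies in $\Zo_G(X)$ if and only if $n_W=0$ whenever $W$ is not a component of some $G$-prime subscheme, and $n_W$ is constant over the components of each $G$-prime subscheme.

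The mechanism that makes this description glue is a consequence of minimality: if $U$ is a $G$-invariant open and $Y$ is $G$-prime, then the scheme-theoretic closure $\overline{Y\cap U}$ in $X$ is a reduced $G$-invariant closed subscheme of $Y$, equidimensional of the same dimension (using that the closure of a $G$-invariant open subscheme is $G$-invariant, \S\ref{sect:alg_gp}); by minimality it is empty or all of $Y$. Thus $U$ meets either none or all of the irreducible components of $Y$, and in the latter case $Y\cap U$ is $G$-prime in $U$ with components the nonempty traces of those of $Y$. Granting the characterisation, the conclusion is then formal: given $\alpha=\sum_W n_W[W]$ with each $\alpha|_{U_i}\in\Zo_G(U_i)$, and a component $W$ of a $G$-prime $Y$ with $n_W\neq 0$, choose $U_i$ meeting $W$; then $U_i$ meets every component $W'$ of $Y$, the traces $W\cap U_i$ and $W'\cap U_i$ are components of the single $G$-prime subscheme $Y\cap U_i$ of $U_i$, and the local membership $\alpha|_{U_i}\in\Zo_G(U_i)$ forces $n_{W'}=n_W$ as well as the vanishing of the coefficient of any prime cycle that is not a $G$-prime component. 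Hence $\alpha$ satisfies the global criterion and lies in $\Zo_G(X)$.

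The main obstacle is the structural description of $\Zo_G$ in the second paragraph, namely that every equidimensional $G$-invariant closed subscheme decomposes into $G$-prime blocks with constant multiplicities, and that these blocks are unique. For constant or \'etale $G$ this is the classical statement that $G$ permutes components through $\pi_0(G)$, with multiplicities equalised by the corresponding automorphisms; the delicate case is that of a non-smooth, in particular infinitesimal, group such as $\mu_p$ in characteristic $p$, where there are no nontrivial geometric automorphisms and one must argue scheme-theoretically --- verifying, for connected $G$, that each irreducible component of a reduced $G$-invariant subscheme is itself $G$-invariant (equivalently, that minimal primes of a $\Char{G}$-graded reduced ring are homogeneous). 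Once this is settled, the minimality/all-or-none mechanism handles the gluing uniformly, without ever invoking group elements, which is precisely what one needs for non-constant groups.
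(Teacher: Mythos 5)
Your ``only if'' direction is fine, but your ``if'' direction rests on a structural description of $\Zo_G$ that is false precisely in the case this lemma exists to handle: non-smooth $G$. The lemma is stated for an arbitrary algebraic group, and the paper applies it (via \rref{lemm:ZG} and \rref{prop:deg_Ginv}) to groups containing $\mu_p$ over fields of characteristic $p$, where $\mu_p$ is infinitesimal. In that setting the claim that $\Zo_G(X)$ is the free abelian group on classes of reduced $G$-primes --- equivalently, your coefficient criterion --- fails. Concretely: let $k$ have characteristic $p$, let $X=\Au$ carry the $\mu_p$-action of \rref{ex:Au} (the $\Zz/p$-grading of $k[x]$ with $x$ of degree $1$), and take $b\in k^\times$. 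The polynomial $x^p-b^p$ is homogeneous of degree $0$, so $Z=\Spec k[x]/(x^p-b^p)$ is a $\mu_p$-invariant, zero-dimensional (hence equidimensional) closed subscheme, and $[Z]=p\,[\{b\}]\in\Zo_{\mu_p}(X)$ by definition. But $(x-b)$ is not a graded ideal, so the reduced point $\{b\}$ is not $\mu_p$-invariant; in fact the only reduced $\mu_p$-invariant closed subschemes of $\Au$ are $\varnothing$, the origin, and $\Au$ itself (a principal ideal $(f)\subset k[x]$ is graded only if $f$ is homogeneous, i.e.\ $f=x^ih(x^p)$, and such an $f$ is squarefree only if $h$ is constant and $i\leq 1$). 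So $\{b\}$ is a component of no $G$-prime, yet its coefficient in the cycle $p[\{b\}]\in\Zo_G(X)$ is nonzero: your criterion is not even a necessary condition for membership, and your gluing argument uses both implications of it (necessity on each $U_i$, sufficiency on $X$).

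The root cause sits one step before the place you flagged as delicate. The statement you proposed to verify for connected $G$ --- that minimal primes of a reduced $\Char{G}$-graded ring are homogeneous --- is actually true (for $\mu_p$ in characteristic $p$ one can see it using the grading derivation $D(a_i)=i\,a_i$, which preserves minimal primes, together with a Vandermonde argument). What fails is that the generators of $\Zo_G(X)$ are classes of arbitrary, possibly non-reduced, invariant subschemes, and for infinitesimal $G$ the reduction of an invariant subscheme need not be invariant: radicals of graded ideals need not be graded when the grading group is $p$-torsion in characteristic $p$, as the example shows. Hence $[Z]$ cannot in general be decomposed into reduced $G$-prime blocks, and the whole mechanism collapses (your approach could likely be completed when $G$ is smooth, e.g.\ a constant finite group, but not here). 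The paper's proof avoids structure theory altogether: it observes that restriction to a $G$-invariant open subscheme and scheme-theoretic closure from one both preserve $\Zo_G$, writes $z|_{V\cup U_i}=v_*(z|_V)+u_*(z|_{U_i})-c_*(z|_{V\cap U_i})$ by inclusion--exclusion, and concludes by quasi-compactness of $X$; that argument never needs to know what the generators of $\Zo_G$ look like.
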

\begin{proof}
When $v\colon V \to Y$ is a $G$-invariant open immersion, we have $v^*\Zo_G(Y) \subset \Zo_G(V)$. Since the scheme-theoretic closure of a $G$-invariant closed subscheme of $V$ is a $G$-invariant closed subscheme of $Y$, we also have $v_*\Zo_G(V) \subset \Zo_G(Y)$. 

Thus one implication is clear. Let now $z\in \Zo(X)$ be such that $z|_{U_i} \in \Zo_G(U_i)$ for every $i$. Let $V$ be a $G$-invariant open subscheme of $X$ such that $z|_V \in \Zo_G(V)$ and $V \neq X$. We construct a $G$-invariant open subscheme $W$ of $X$ such that $V \subsetneq W$ and $z|_W \in \Zo_G(W)$. By quasi-compactness of $X$, this will prove that $z \in \Zo_G(X)$, concluding the proof of the lemma. We may find $i$ such that $U_i \not \subset V$. Let $W=V \cup U_i$. Then
\[
z|_W = v_*(z|_V) + u_*(z|_{U_i}) - c_*(z |_{V \cap U_i}),
\]
where $v\colon V \to W$ and $u \colon U_i \to W$ and $c \colon V \cap U_i \to W$ are the open immersions. Using the above observations, we deduce that $z|_W \in \Zo_G(W)$, as required.
\end{proof}

\begin{definition}
A non-empty variety with a $G$-action in which every non-empty $G$-invariant open subscheme is dense will be called \emph{$G$-irreducible}.
\end{definition}

\begin{lemma}
\label{lemm:Girred}
Let $G$ be an algebraic group acting on a variety $X$. Then $\Zo_G(X)$ is generated by classes of equidimensional $G$-irreducible closed subschemes of $X$.
\end{lemma}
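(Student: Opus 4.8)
We must show $\Zo_G(X)$, defined as the subgroup of $\Zo(X)$ generated by classes of equidimensional $G$-invariant closed subschemes, is already generated by classes of equidimensional $G$-\emph{irreducible} closed subschemes. By definition of $\Zo_G(X)$ it suffices to treat a single equidimensional $G$-invariant closed subscheme $Z$ and express $[Z]$ as an integer combination of classes of equidimensional $G$-irreducible closed subschemes. Since $Z$ is itself a $G$-invariant closed subscheme carrying the restricted $G$-action, and restriction of cycles along the closed immersion $Z \to X$ sends $G$-irreducible closed subschemes of $Z$ to $G$-invariant closed subschemes of $X$ (scheme-theoretic closures of $G$-invariant subschemes are $G$-invariant, as noted in the proof of \rref{lemm:equ_cover}), I may assume $X=Z$; that is, I must show that the class of an equidimensional $G$-invariant $X$ decomposes into classes of equidimensional $G$-irreducible closed subschemes.

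\medskip

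\textbf{Plan of proof.} The idea is to decompose $X$ into its $G$-irreducible pieces by a Noetherian induction, tracking a single fixed dimension $d=\dim X$. First I would identify the correct candidates for $G$-irreducible components: define a closed subscheme $Y \subset X$ to be a \emph{maximal $G$-irreducible closed subset of dimension $d$} if it is $G$-irreducible, equidimensional of dimension $d$, and not properly contained in another such. The key geometric fact is that the $d$-dimensional part of $X$ has only finitely many such maximal pieces, because they are unions of the (finitely many) $d$-dimensional irreducible components of $X$, grouped into $G$-orbits: the group $G$ permutes the irreducible components of $X$ (the image of an irreducible component under a $G$-invariant action map is irreducible of the same dimension), and a $G$-irreducible closed subset of dimension $d$ is precisely the reduced union of one orbit of $d$-dimensional components. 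I would make this orbit description precise using the openness of the action morphism $a\colon G\times X \to X$ from the proof of \tref{prop:open_action}{}{}, together with \dref{prop:open_action}{prop:open_action:sat}, to see that the $G$-saturation of an irreducible component is $G$-irreducible.

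\medskip

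\textbf{Carrying out the induction.} With the finite set $Y_1,\dots,Y_r$ of maximal $d$-dimensional $G$-irreducible reduced closed subsets in hand, I would argue that $[X] - \sum_j m_j [Y_j]$, for suitable multiplicities $m_j$ equal to the lengths of $\Oc_{X}$ at the generic points along $Y_j$, is supported in dimension $< d$ or on a proper $G$-invariant closed subscheme. More carefully: the cycle $[X]\in\Zo(X)$ is $\sum_\eta \length_{\Oc_{X,\eta}}(\Oc_{X,\eta})\,[\overline{\{\eta\}}]$ over generic points $\eta$ of $d$-dimensional components, and since $G$ permutes these components transitively within each $Y_j$ and preserves lengths (the action is by automorphisms), the multiplicity $m_j$ is constant on each orbit. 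Hence $[X] = \sum_j m_j [(Y_j)_{\mathrm{red}}]$, and each $(Y_j)_{\mathrm{red}}$ is equidimensional and $G$-irreducible by construction, so this is already the desired expression. I would then feed this back through the reduction $X=Z$ above to conclude for arbitrary generators of $\Zo_G(X)$.

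\medskip

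\textbf{Main obstacle.} The delicate point is establishing that a $G$-irreducible closed subset corresponds exactly to a $G$-orbit of irreducible components, and in particular that such orbits are \emph{finite} and that the reduced union of an orbit really is $G$-irreducible (every non-empty $G$-invariant open subscheme is dense). Finiteness follows since $X$ is Noetherian and has finitely many irreducible components; the $G$-irreducibility of an orbit-union $Y$ requires checking that a non-empty $G$-invariant open $V\subset Y$ meets every component of $Y$, which follows because $V$ is $G$-invariant and $G$ acts transitively on those components (so if $V$ meets one it meets all, via the action morphism being open). I would phrase this using \dref{prop:open_action}{prop:open_action:sat:inv}: any $G$-invariant open in $Y$ either is empty or, after $G$-saturation, contains a whole orbit-union, forcing density. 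Making the permutation action of the algebraic group $G$ on the finite set of components rigorous --- rather than invoking topological $G$-orbits naively --- is where the argument needs care, but it is handled entirely by the openness and surjectivity of $a\colon G\times X\to X$ already extracted in \autoref{prop:open_action}.
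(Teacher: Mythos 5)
Your reduction to a single equidimensional $G$-invariant closed subscheme and the noetherian framing are fine, but the core of your argument --- that the generators should be the \emph{reduced} unions of $G$-orbits of irreducible components --- fails for exactly the groups this section is designed to handle, namely non-reduced (e.g.\ infinitesimal) algebraic groups. The lemma allows $G$ to be an arbitrary algebraic group, and the paper's definition of $\Zo_G(X)$ is explicitly motivated by the action of $\mup$ on itself in characteristic $p$. In that example, $X=\mup=\Spec k[t]/(t^p-1)$ with the translation action corresponds to the $\Zz/p$-grading in which $t$ has degree $1$; every homogeneous element $t^i$ is invertible, so the only graded ideals are $0$ and the unit ideal, i.e.\ the only $G$-invariant closed subschemes are $\varnothing$ and $X$. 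In particular $X_{\mathrm{red}}=\Spec k$ is \emph{not} $G$-invariant: when $G$ is not smooth, $G\times Y_{\mathrm{red}}$ is itself non-reduced and the action morphism need not factor through $Y_{\mathrm{red}}$, so the reduction of a $G$-invariant closed subscheme need not be $G$-invariant. Consequently your claimed generators $(Y_j)_{\mathrm{red}}$ need not be $G$-irreducible closed subschemes at all (they need not even carry a $G$-action), and your identity $[X]=\sum_j m_j[(Y_j)_{\mathrm{red}}]$ --- which here reads $[X]=p\,[X_{\mathrm{red}}]$ --- is not a decomposition into classes of $G$-irreducible closed subschemes. The correct decomposition in this example keeps the non-reduced structure: $X$ itself is equidimensional and $G$-irreducible, and $[X]$ is already a generator. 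Your identification ``$G$-irreducible $=$ reduced union of an orbit of components'', as well as the transitivity and length-preservation arguments phrased via ``the action is by automorphisms'' (which presupposes enough rational points of $G$), is only valid for smooth --- e.g.\ constant --- groups.

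The paper's proof avoids any classification of $G$-irreducible subschemes and works uniformly: by noetherian induction, if an equidimensional $G$-invariant $Y$ is not $G$-irreducible, pick a non-empty, non-dense $G$-invariant open $U\subset Y$, let $Y_0$ be the scheme-theoretic closure of $U$ and $Y_1$ the scheme-theoretic closure of the $G$-invariant open $Y-Y_0$; both are proper, equidimensional, $G$-invariant closed subschemes (scheme-theoretic closures of $G$-invariant opens are $G$-invariant, as recorded in \S\ref{sect:alg_gp}), and comparing generic points and multiplicities gives $[Y]=[Y_0]+[Y_1]$. Splitting along scheme-theoretic closures, rather than along reduced orbit unions, is precisely the device that makes the argument valid for arbitrary, possibly non-reduced, $G$; to rescue your approach you would have to endow your orbit unions with suitable non-reduced $G$-invariant scheme structures, at which point you have essentially reconstructed the paper's induction.
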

\begin{proof}
For every equidimensional $G$-invariant closed subscheme $Y$ of $X$, we prove that the class $[Y] \in \Zo(Y)$ may be written as the sum of classes of $G$-irreducible closed subschemes of $X$. We may assume that $Y$ is non-empty and not $G$-irreducible. Let $U$ be a non-empty and non-dense $G$-invariant open subscheme of $Y$. Let $Y_0$ be its scheme-theoretic closure in $Y$, and $Y_1$ the scheme-theoretic closure of $Y-Y_0$ in $Y$. The set of generic points of $Y$ is the disjoint union of the sets of generic points of $Y_0$ and $Y_1$, and the multiplicities coincide. It follows that $[Y] = [Y_0] +[Y_1]$ in $\Zo(Y)$. Since $Y_0 \neq Y$ (as $U$ is not dense in $Y$) and $Y_1 \neq Y$ (as $U \cap Y_1 = \varnothing$ and $U \neq \varnothing$), and $Y_0,Y_1$ are equidimensional and $G$-invariant, we may conclude using noetherian induction.
\end{proof}

We now state a technical fact used in the proof of \rref{prop:deg_Ginv}.
\begin{lemma}
\label{lemm:proj_Ginv}
Let $G$ be an algebraic group, and $i \colon Y \to X$ the immersion of a $G$-invariant closed subscheme. For any $\alpha \in \Zo_G(X)$, the cycle $i_*(\alpha|_Y) \in \Zo(X)$ is a $\Zz$-linear combination of classes of equidimensional $G$-invariant closed subschemes of $X$ supported on $Y$.
\end{lemma}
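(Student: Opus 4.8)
The plan is to reduce at once to a single generator of $\Zo_G(X)$ and then decide, for that generator, whether its restriction to $Y$ is the whole class or zero. By \rref{lemm:Girred} I may write $\alpha = \sum_k n_k [W_k]$, where each $W_k$ is an equidimensional $G$-irreducible closed subscheme of $X$ (in particular $G$-invariant, since being a variety with a $G$-action it is $G$-invariant in $X$). Both the restriction $(-)|_Y$ and the pushforward $i_*$ are additive, so it suffices to show that for each such $W=W_k$ the cycle $i_*([W]|_Y)$ has the required shape. Writing $\eta_1,\dots,\eta_r$ for the generic points of $W$ and $W_j=\overline{\{\eta_j\}}$ for the associated irreducible components, we have $[W]=\sum_j \ell_j [W_j]$ with $\ell_j=\length \Oc_{W,\eta_j}$, so that $[W]|_Y$ retains exactly those components with $\eta_j \in Y$, equivalently (as $Y$ is closed) with $W_j \subset Y$.

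The dichotomy I would exploit is whether $W\subset Y$ or not. The open subscheme $W - (W\cap Y)$ of $W$ is $G$-invariant, being the intersection of $W$ with the $G$-invariant open subscheme $X-Y$. If $W\subset Y$, then every $\eta_j$ lies in $Y$, so $[W]|_Y=[W]$ and $i_*([W]|_Y)=[W]$ is the class of an equidimensional $G$-invariant closed subscheme of $X$ supported on $Y$, as desired. Otherwise $W-(W\cap Y)$ is non-empty, hence dense in $W$ by $G$-irreducibility; being a dense open subset of a noetherian space it meets every irreducible component of $W$, and therefore contains each generic point $\eta_j$. Consequently no $\eta_j$ lies in $Y$, whence $[W]|_Y=0$ and $i_*([W]|_Y)=0$, which is again (vacuously) a $\Zz$-linear combination of the allowed classes.

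The sole point requiring a little care --- and the only place where $G$-irreducibility enters --- is the claim that a dense open subscheme of $W$ contains all the generic points of $W$: this holds because an open set meeting an irreducible component contains a non-empty open subset of it, and every non-empty open subset of an irreducible space contains its generic point. I do not expect any genuine obstacle here. The content of the lemma is precisely that, once the generators are chosen $G$-irreducible, their $G$-invariance forces the restriction to $Y$ to be all-or-nothing; everything else is formal additivity and the elementary topology of dense opens.
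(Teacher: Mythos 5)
Your proof is correct and takes essentially the same route as the paper's: both invoke \rref{lemm:Girred} to reduce to equidimensional $G$-irreducible generators and then use the all-or-nothing dichotomy, namely that the $G$-invariant open complement of $Y$ in such a piece is either empty (so the restriction is the full class, supported on $Y$) or dense (so no generic point lies in $Y$ and the restriction vanishes). The only difference is cosmetic: the paper replaces $X$ by the $G$-irreducible piece and assumes $\alpha=[X]$, whereas you keep the pieces inside $X$ and spell out the multiplicities explicitly.
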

\begin{proof}
By \rref{lemm:Girred} we may assume that $\alpha =[X]$ and that $X$ is equidimensional and $G$-irreducible. Then either $i$ is surjective and $i_*([X]|_Y)=[X]$, or $Y$ contains no generic point of $X$ and $[X]|_Y=0$. In either case, the statement is true.
\end{proof}

\subsection{Representing \texorpdfstring{$\varrho(X)$}{\textrho(X)} by equivariant cycles}
The next statement asserts that the divisor class of a rational function on which a finite diagonalisable group acts through a character is represented by an equivariant cycle.

\begin{lemma}
\label{lemm:div_weight}
Let $G$ be a finite diagonalisable group acting on a variety $X$, and $u\colon U \to X$ the immersion of an equidimensional $G$-invariant open subscheme. Let $a \in  H^0(U,\Gm)$ be homogeneous (as a section of $\Oc_U$, see \S\ref{def:weight}). Then $[U]$ is mapped to an element of $\Zo_G(X)$ under the composite
\[
\Zo(U) \xrightarrow{\{a\}} C(U,K_1) \xrightarrow{u_*} C(X,K_1) \xrightarrow{d} \Zo(X).
\]
\end{lemma}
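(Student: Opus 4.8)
The plan is to identify the composite with the Weil divisor of $a$, and then to split that divisor as a difference of cycles of zero-schemes of \emph{homogeneous} functions, which are $G$-invariant by \S\ref{def:weight}. Since membership in $\Zo_G$ may be tested on a $G$-invariant open cover by \rref{lemm:equ_cover}, and each of the three maps $\{a\}$, $u_*$, $d$ is compatible with restriction to a $G$-invariant open subscheme, I may assume that $X=\Spec A$ is affine and $G$-invariant, with $A$ a $\Char{G}$-graded $k$-algebra by \rref{lemm:action_grading}. Replacing $X$ by the scheme-theoretic closure of $U$ --- a $G$-invariant closed subscheme, along which the closed-immersion pushforward preserves $\Zo_G$ --- I may furthermore assume that $U$ is dense, so that $U$ contains every generic point of $X$ and $X$ is equidimensional. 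At the level of complexes $u_*$ is the inclusion of the summand $C(U,K_1)\subset C(X,K_1)$; hence $u_*\{a\}[U]$ is supported on the generic points of $X$, with coefficient (up to geometric multiplicity) the class of $a$ in $K_1(k(\eta))=k(\eta)^\times$, and applying $d$ returns exactly the Weil divisor $\operatorname{div}_X(a)$. Its restriction to $U$ vanishes because $a$ is a unit there, so $\operatorname{div}_X(a)$ is supported on $X\setminus U$, as one expects.

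The heart of the argument is to write $a=b/c$ with $b,c\in A$ \emph{homogeneous nonzerodivisors}. Granting this, $\{a\}=\{b\}-\{c\}$ in $K_1$ at every generic point and $d$ is additive, so $\operatorname{div}_X(a)=\operatorname{div}_X(b)-\operatorname{div}_X(c)$; and since $b$ is a regular nonzerodivisor, $\operatorname{div}_X(b)=[Z(b)]$, the cycle of its zero-scheme $Z(b)=\Spec A/bA$ (likewise for $c$). Now $b$ homogeneous forces $bA$ to be a graded ideal, so $Z(b)$ is a $G$-invariant closed subscheme by \S\ref{def:weight}; and $b$ being a nonzerodivisor on the equidimensional $X$ forces $Z(b)$ to be equidimensional of pure codimension one. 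Therefore $[Z(b)]\in\Zo_G(X)$, and similarly $[Z(c)]\in\Zo_G(X)$, whence $\operatorname{div}_X(a)=[Z(b)]-[Z(c)]\in\Zo_G(X)$, as required.

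It remains to produce $b$ and $c$, and this is where I expect the \textbf{main obstacle} to lie. I would consider the ideal of denominators $\mathfrak{d}=\{x\in A: xa\in A\}$; comparing homogeneous components shows at once that $\mathfrak{d}$ is graded (using that $a$ is homogeneous), and since $a$ is regular at each generic point of $X$ (as $U$ is dense), $\mathfrak{d}$ lies in no minimal prime of $A$. One then wants a \emph{homogeneous} nonzerodivisor $c\in\mathfrak{d}$, for then $b:=ca$ is automatically a homogeneous nonzerodivisor. Extracting such a $c$ is a graded prime-avoidance statement for the finite $\Char{G}$-grading, and it is exactly here that the non-constant, finite group-scheme nature of $G$ bites: because one cannot freely shift weights, the naive ``general linear combination'' argument, which produces only an inhomogeneous element, is unavailable. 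One must instead argue using that the minimal primes of a graded ring are themselves graded, and shrink $X$ along the $G$-invariant basic opens $D(f)$ attached to homogeneous $f$ (legitimate by \rref{lemm:equ_cover}) so as to reduce the number of minimal primes to be avoided simultaneously, with some additional care in the non-reduced case. This is the delicate point that genuinely exploits the diagonalisable framework; the remaining verifications (additivity of $d$, the identity $\operatorname{div}_X(b)=[Z(b)]$, and the reductions above) are routine.
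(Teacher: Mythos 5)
Your overall strategy is the same as the paper's: identify the composite with the divisor of $a$, reduce via \rref{lemm:equ_cover} and scheme-theoretic closure to the case of a graded affine $X=\Spec A$ with $U$ dense, and exhibit the divisor as a difference of cycles $[Z(b)]$ of \emph{homogeneous} nonzerodivisors, which lie in $\Zo_G(X)$ by \S\ref{def:weight} and equidimensionality of Cartier divisors. The gap is exactly where you located it, but your sketched way out rests on a false statement: for gradings by a \emph{finite} abelian group such as $\Char{G}$, the minimal (and associated) primes of a graded ring are in general \emph{not} graded. Take $A=k[t]/(t^2-1)$ with $t$ of weight $1\in\Zz/2$ and $\mathrm{char}\,k\neq 2$; geometrically this is the free $\mu_2$-action interchanging the two points of $\Spec(k\times k)$. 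The two minimal primes $(t\pm 1)$ contain no nonzero homogeneous element at all, so the graded-prime-avoidance route you propose cannot get started. (Gradedness of minimal primes is a feature of $\Zz$-gradings, i.e.\ of $\Gm$-actions, and is precisely what fails for finite diagonalisable groups --- this failure is the whole subtlety of the lemma.) In addition, as you half-acknowledge, avoiding minimal primes is not enough: a nonzerodivisor must avoid all \emph{associated} primes, which is why the paper arranges, via the scheme-theoretic closure, that $U$ contain all associated points of $X$, not merely its generic points.

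The paper's resolution is a weight-zero trick which proves exactly the avoidance statement you need. Let $J\subset A$ be the graded ideal cutting out a $G$-invariant closed complement of $U$. For \emph{any} prime $\pf$, if $J_0\subset\pf$ then $J\subset\pf$: a homogeneous $x\in J\cap A_g$ satisfies $x^{|G|}\in J\cap A_{|G|\cdot g}=J_0\subset\pf$ (since $|G|\cdot g=0$ in $\Char{G}$), hence $x\in\pf$. As no associated prime of $A$ contains $J$ (all associated points lie in $U$), no associated prime contains $J_0$, so \emph{ordinary} prime avoidance inside $A_0$ yields $f\in J_0$ lying in no associated prime of $A$: a nonzerodivisor of weight $0$ with $D(f)\subset U$. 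The paper then also sidesteps your search for two homogeneous nonzerodivisors with $a=b/c$: it shrinks $U$ to the dense $G$-invariant open $D(f)$ (harmless by \cite[(4.1.1), (4.2.1)]{Rost-Chow}), writes $a=b/f^n$ with $b=af^n\in A$ a nonzerodivisor that is homogeneous \emph{of the same weight as} $a$ --- using again that $f$ has weight zero --- and concludes
\[
d\circ u_*\circ\{a\}[U]=[Z(b)]-n\cdot[Z(f)]\in\Zo_G(X).
\]
So your plan is salvageable in outline, but the missing step is the real content of the lemma, and the correct tool is this $|G|$-th power trick, not graded primes.
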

\begin{proof}
Replacing $X$ with the scheme-theoretic closure of $U$, we may assume that $U$ contains all associated points of $X$ and that $X$ is equidimensional. In view of \rref{lemm:equ_cover}, we may replace $X$ with a cover by $G$-invariant open subschemes, and thus assume that $X$ is the spectrum of a $\Char{G}$-graded $k$-algebra $A$. We are going to find an element $f \in A_0$, which is a nonzerodivisor in $A$ and is such that $D(f) \subset U$. First observe that $U$ admits a $G$-invariant closed complement in $X$ (let $R$ be the reduced closed complement of $U/G$ in $X/G$, and take $R \times_{X/G} X$); it is defined by some graded ideal $J$ of $A$. We claim that $J_0$ is contained in no associated prime of $A$. Indeed let $\pf$ be a prime of $A$ such that $J_0 \subset \pf$. If $x \in J_g$ for some $g\in \Char{G}$, then $x^{|G|} \in J_0\subset \pf$, hence $x \in \pf$. It follows that $J \subset \pf$. Thus $\pf$ cannot be an associated prime of $A$ (as they all lie in $U$), which proves the claim. By prime avoidance, we may find $f \in J_0$ such that $f$ is in none of the finitely many primes $\pf \cap A_0$ of $A_0$ for $\pf$ an associated prime of $A$. Then $f$ has the required properties.

Since the $G$-invariant open subscheme $D(f)$ of $X$ is dense (being the complement of an effective Cartier divisor) and contained in $U$, the open immersion $v \colon D(f) \to U$ is dense, hence by \cite[(4.2.1) and (4.1.1)]{Rost-Chow}
\[
u_* \circ \{a\} [U] = u_* \circ \{a\} \circ v_*[D(f)] = u_* \circ v_* \circ \{a\}[D(f)] = (u \circ v)_* \circ \{a\}[D(f)],
\]
so that we may replace $U$ with $D(f)$ while proving the lemma. Then $a=b/f^n\in A[1/f]^\times$ for some integer $n\geq 0$ and $b \in A$. We view $A$ as a subalgebra of $A[1/f]$ (as $f$ is a nonzerodivisor in $A$). Then $b$ is a nonzerodivisor in $A$, as is any element of $A[1/f]^\times \cap A$. Since $f \in A_0$, the element $b=af^n \in A$ is homogeneous (of the same weight as $a$). By \cite[Lemma~1.7.2]{Ful-In-98} we have in $\Zo(X)$
\[
d\circ u_*\circ \{a\}[U] = d\circ u_*\circ \{b\}[U] - n\cdot d\circ u_*\circ \{f\}[U]= [Z(b)] - n \cdot [Z(f)].
\]
The closed subschemes $Z(b)$ and $Z(f)$ of $X$ (defined in \S\ref{sect:D}) are $G$-invariant by \S\ref{def:weight} and equidimensional (being effective Cartier divisors in the equidimensional variety $X$), hence the above cycle belongs to $\Zo_G(X)$.
\end{proof}

\begin{lemma}
\label{lemm:ZG}
Let $G$ be a finite diagonalisable group containing $\mun$, and $X$ a variety with a $G$-action. Assume that $\mun$ acts freely on $X$ (see \rref{def:free}). Then:
\begin{enumerate}[label=(\roman*),ref=\roman*]
\item \label{lemm:ZG:action} The $G$-action on $X$ induces a $G$-action on $X/\mun$ and a $G$-equivariant structure on the $\Oc_{X/\mun}$-module $\Lch$ defined in \rref{def:Lchi}.

\item \label{lemm:ZG:L} The $\Oc_{X/\mun}$-module $\Lch$ is locally generated by a $\Char{G}$-homogeneous section.

\item \label{lemm:ZG:c1} The image of $\Zo_G(X/\mun)$ in $\CH(X/\mun)$ is stable under $c_1(\Lch)$.

\item \label{lemm:ZG:s} Let $u\colon X \to \overline{X}$ be a $G$-invariant open immersion, and consider the section $\sch \in H^0(X/\mun,\Gmn)$ defined in \rref{def:schi}. Then the composite
\[
\Zo(X/\mun)/n \xrightarrow{\{\sch\}} C(X/\mun,K_1/n) \xrightarrow{u_*} C(\overline{X}/\mun,K_1/n) \xrightarrow{d} \Zo(\overline{X}/\mun)/n
\]
maps the image of $\Zo_G(X/\mun)$ into the image of $\Zo_G(\overline{X}/\mun)$.
\end{enumerate}
\end{lemma}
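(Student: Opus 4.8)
The plan is to work affine-locally with gradings and to reduce the two substantial assertions \dref{lemm:ZG}{lemm:ZG:c1} and \dref{lemm:ZG}{lemm:ZG:s} to \rref{lemm:div_weight}. Write $\pi\colon \Char{G}\to \Char{\mun}=\Zz/n$ for the surjection dual to $\mun\subset G$, and $H=\ker\pi=\Char{G/\mun}$. On a $G$-invariant affine open $\Spec A$ of $X$, with $A$ its $\Char{G}$-graded coordinate ring, the subgroup $\mun$ acts through the coarser $\Zz/n$-grading $A^{(i)}=\bigoplus_{\pi(\gamma)=i}A_\gamma$, so that $X/\mun=\Spec A^{(0)}$ by \rref{prop:quotient_graded} and $\Lch$ is the $A^{(0)}$-module $A^{(1)}$ by \rref{def:Lchi}. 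For \dref{lemm:ZG}{lemm:ZG:action} I observe that $A^{(0)}=\bigoplus_{\gamma\in H}A_\gamma$ carries a $\Char{G}$-grading (concentrated in $H$), which by \rref{lemm:action_grading} is a $G$-action on $X/\mun$; it glues over a $G$-invariant affine cover since the construction is functorial. Likewise $A^{(1)}=\bigoplus_{\gamma\in\pi^{-1}(1)}A_\gamma$ is a $\Char{G}$-graded $A^{(0)}$-module (concentrated in the coset $\pi^{-1}(1)$), which furnishes the asserted $G$-equivariant structure on $\Lch$.

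For \dref{lemm:ZG}{lemm:ZG:L}, recall that the free $\mun$-action makes $\Lch$ an invertible $\Oc_{X/\mun}$-module (as noted after \rref{def:free}). Its $G$-equivariant grading pieces $\Fc_\gamma=(\varphi_*\Oc_X)_\gamma$, for $\gamma\in\pi^{-1}(1)$, are finitely many coherent subsheaves with $\bigoplus_\gamma \Fc_\gamma=\Lch$; since $\Lch$ has one-dimensional fibres, at each point exactly one $\Fc_\gamma$ has nonzero fibre. Hence the supports of the $\Fc_\gamma$ are disjoint, closed, and cover $X/\mun$, so they are $G$-invariant clopen subschemes on each of which $\Lch$ coincides with the single homogeneous piece $\Fc_\gamma$; this already yields \dref{lemm:ZG}{lemm:ZG:L}. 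I record the consequence used below: if $Y$ is $G$-irreducible (hence admits no nontrivial $G$-invariant clopen decomposition), then $\Lch|_Y$ equals $\Fc_\gamma|_Y$ for a single $\gamma\in\pi^{-1}(1)$, so $\Lch|_Y$ is generated locally by $\Char{G}$-homogeneous sections, all of the same weight $\gamma$; and by \S\ref{def:weight} the nonvanishing locus $D_{\Lch|_Y}(a)$ of such a homogeneous $a$ is a $G$-invariant open.

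To prove \dref{lemm:ZG}{lemm:ZG:c1}, by \rref{lemm:Girred} it suffices to treat $\alpha=[Y]$ for $Y\subset X/\mun$ a $G$-irreducible equidimensional closed subscheme, and by \dref{lemm:c1}{lemm:c1:proper} one has $c_1(\Lch)[Y]=i_*c_1(\Lch|_Y)[Y]$ for the closed immersion $i\colon Y\to X/\mun$. On $Y$ the bundle $\Lch|_Y$ is homogeneous of a single weight $\gamma$, so the $G$-invariant opens $D_{\Lch|_Y}(a_i)$ (with $a_i$ homogeneous of weight $\gamma$) provide a cover $V_1,\dots,V_m$ trivialising $\Lch|_Y$ by homogeneous sections $l_i=a_i$; as $Y$ is $G$-irreducible each nonempty $V_i$ is dense, so I may take the dense open $U=V_1$ with section $l=a_1$. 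Then $s_i=l/l_i\in H^0(U\cap V_i,\Gm)$ is $\Char{G}$-homogeneous of weight $0$, and \rref{lemm:c1_merom} represents $c_1(\Lch|_Y)[Y]$ by a cycle whose restriction to each $V_i$ is $d\circ (u_i)_*\circ\{s_i\}[U\cap V_i]$, which lies in $\Zo_G(V_i)$ by \rref{lemm:div_weight}. By \rref{lemm:equ_cover} the representing cycle lies in $\Zo_G(Y)$, and $i_*$ carries it into $\Zo_G(X/\mun)$, proving the claim.

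Finally \dref{lemm:ZG}{lemm:ZG:s}. Again by \rref{lemm:Girred} reduce to $\alpha=[Y]$ with $Y$ $G$-irreducible, and let $\bar\imath\colon\overline Y\to\overline X/\mun$ be the closure, a $G$-irreducible closed subscheme with $Y=\overline Y\cap(X/\mun)$ a dense open. Since $\Lch|_Y$ is homogeneous of weight $\gamma$, choose $a$ homogeneous of weight $\gamma$ and set $Y'=D_{\Lch|_Y}(a)$, a dense $G$-invariant open of $Y$ carrying the open immersion $j\colon Y'\to\overline Y$; there $a$ trivialises $\Lch$, so by \rref{def:schi} the element $\sch$ lifts to the homogeneous unit $t\in H^0(Y',\Gm)$ given by the image of $a^{\otimes n}$ under $\Lch^{\otimes n}\to\Oc_{X/\mun}$, whose weight $n\gamma$ lies in $H=\ker\pi$ because $\pi(n\gamma)=n\cdot 1=0$ — this is the key point that makes $t$ a legitimate $\Char{G}$-homogeneous section for the $G$-action on $X/\mun$. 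As $\{\sch\}[Y]$ is supported at the generic points of $Y$, which lie in $Y'$, writing $[Y]=j'_*[Y']$ for the open immersion $j'\colon Y'\to Y$ and using \dref{lemm:Gmn}{lemm:Gmn:pushforward} gives $u_*\{\sch\}[Y]=(\bar\imath\circ j)_*\{t\}[Y']$, where $\{t\}$ is now the operation induced by $t\in\Gm$ on $K_1/n$. Applying \rref{lemm:div_weight} on $\overline Y$ with the $G$-invariant open $Y'$ and the homogeneous unit $t$ shows $d\circ j_*\circ\{t\}[Y']$ to be the reduction modulo $n$ of a cycle in $\Zo_G(\overline Y)$, whose image under $\bar\imath_*$ lies in the image of $\Zo_G(\overline X/\mun)/n$, as required. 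I expect this last part to be the main obstacle: one must identify the a priori only $\Gmn$-valued section $\sch$, locally, with a genuine $\Char{G}$-homogeneous unit $t$, verify that its weight $n\gamma$ falls into $H$, and then match the modulo $n$ operation $\{\sch\}$ on $C(-,K_1/n)$ with the integral computation of \rref{lemm:div_weight}; the reductions to $G$-irreducible subschemes and the gluing via \rref{lemm:equ_cover} are exactly what allow these local identifications to be assembled into a global one.
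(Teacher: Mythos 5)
The gap is in your argument for part (\ref{lemm:ZG:L}), and it propagates into parts (\ref{lemm:ZG:c1}) and (\ref{lemm:ZG:s}). You treat the homogeneous pieces $\Fc_\gamma$ of the $G$-equivariant structure on $\Lch$ as coherent subsheaves of $\Lch$ on $X/\mun$, with fibres and supports on $X/\mun$. They are not: by the paper's definition of an equivariant structure, the $\Char{G}$-grading lives on the pushforward of $\Lch$ to $(X/\mun)/G=X/G$, and the pieces $\Fc_\gamma$ are modules only over the weight-zero part $\Oc_{X/G}$; multiplication by a section of $\Oc_{X/\mun}$ of nonzero weight $h\in\ker(\Char{G}\to\Zz/n)$ carries $\Fc_\gamma$ into $\Fc_{\gamma+h}$, so the $\Fc_\gamma$ are not $\Oc_{X/\mun}$-submodules, and the one-dimensionality of the fibres of $\Lch$ on $X/\mun$ gives no control over them. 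Concretely, take $G=\mu_4$, $\mun=\mu_2$, $X=\Spec k[x,x^{-1}]$ with $x$ of weight $1\in\Zz/4$. Then $X/\mu_2=\Spec k[x^2,x^{-2}]$ is irreducible, $\Lch=x\,k[x^2,x^{-2}]$, and the two graded pieces $x\,k[x^4,x^{-4}]$ (weight $1$) and $x^3\,k[x^4,x^{-4}]$ (weight $3$) both have full support, so there is no clopen decomposition separating them; moreover $\Lch$ is generated globally both by the weight-$1$ section $x$ and by the weight-$3$ section $x^3=x^2\cdot x$, because the unit $x^2$ of $\Oc_{X/\mu_2}$ has nonzero weight $2$. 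Thus both your clopen-support claim and the ``single weight $\gamma$'' consequence that you record for later use are false.

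What survives, and how to repair it: part (\ref{lemm:ZG:L}) itself is true, but it needs the paper's argument, which is genuinely different from yours: on a $G$-invariant affine open where $\Lch$ is trivialised by a section $a$, decompose $a=\sum_g a_g$ into its $\Char{G}$-homogeneous components; by \eqref{eq:D} the $G$-invariant open subschemes $D_{\Lch}(a_g)$ cover $X/\mun$, and on $D_{\Lch}(a_g)$ the homogeneous section $a_g$ generates $\Lch$. The correct output is weaker than what you assumed: homogeneous local generators exist, but their weights may differ and the trivialising opens overlap. Fortunately your arguments for (\ref{lemm:ZG:c1}) and (\ref{lemm:ZG:s}) only need this weaker statement, after which they coincide with the paper's own proof: in (\ref{lemm:ZG:c1}) the comparison functions $s_i=l_j/l_i$ are homogeneous of weight $g_j-g_i$ rather than of weight $0$, which is all that \rref{lemm:div_weight} requires; and in (\ref{lemm:ZG:s}) one needs only a single dense $G$-invariant open with a homogeneous trivialisation $a$, the lift of $\sch$ given by the image of $a^{\otimes n}$ then being homogeneous of weight $ng$ (your observation that this weight lies in $\ker(\Char{G}\to\Zz/n)$ is a correct consistency check, but it is not the crux). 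Note also that the paper secures the reduction to $G$-irreducible closed subschemes by writing them as $Y/\mun$ via \rref{lemm:closed_free} before invoking part (\ref{lemm:ZG:L}), whereas you restrict $\Lch$ and its generators to the closed subscheme directly; that shortcut is fine once (\ref{lemm:ZG:L}) is correctly proved, since restrictions of homogeneous generators remain homogeneous generators.
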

\begin{proof}
\eqref{lemm:ZG:action}: The morphism $X \xrightarrow{\varphi} X/G$ is $\mun$-equivariant, hence factors as $X \xrightarrow{\psi} X/\mun \xrightarrow{\lambda} X/G$.
 For $i \in \Zz/n$, we have $\lambda_* ((\psi_* \Oc_X)_i) = \bigoplus_{\alpha(g)=i} (\varphi_*\Oc_X)_g$, where $\alpha \colon \Char{G} \to \Char{\mun}=\Zz/n$ is induced by the inclusion $\mun \subset G$. The first part of \eqref{lemm:ZG:action} follows from the case $i=0$, \rref{lemm:action_grading} and \rref{prop:quotient_graded}. The second part follows from the case $i=1$.

\eqref{lemm:ZG:L}: We may assume that $X$ is the spectrum of a $\Char{G}$-graded $k$-algebra $A$, and that the $A_0$-module $A_1$ is free with basis $a$ (here $0,1 \in \Zz/n=\Char{\mun}$). For $g \in \Char{G}$, denote by $a_g$ the component of $a$ in $A_g$. Then the $G$-invariant (by \S\ref{def:weight}) open subschemes $D_{\Lch}(a_g)$ for $g \in \Char{G}$ cover $X/\mun$ by \eqref{eq:D}, and $\Lch|_{D_{\Lch}(a_g)}$ is generated by the section $a_g$.

\eqref{lemm:ZG:c1}: In view of \rref{lemm:Girred} and \rref{lemm:closed_free}, it will suffice to prove that $c_1(\Lch)[X/\mun] \in \CH(X/\mun)$ is represented by a cycle in $\Zo_G(X/\mun)$ under the assumption that $X/\mun$ is equidimensional and $G$-irreducible. By \eqref{lemm:ZG:L}, we may find a cover of $X/\mun$ by $G$-invariant open subschemes $V_i$, and for each $i$ a nowhere vanishing section $l_i$ of $\Lch|_{V_i}$ of weight $g_i \in \Char{G}$. Let us fix an index $j$ such that $V_j \neq \varnothing$, and thus $V_j$ is dense in $X/\mun$. For each $i$, let $s_i \in H^0(V_i \cap V_j,\Gm)$ be the element such that $l_j = s_i l_i$ as a section of $\Lch$ on $V_i \cap V_j$. The section $s_i$ of $\Oc_{V_i \cap V_j}$ has weight $g_j-g_i \in \Char{G}$. By \rref{lemm:c1_merom} and \rref{lemm:div_weight}, the cycle class $c_1(\Lch)[X/\mun] \in \CH(X/\mun)$ is represented by a cycle in $\Zo(X/\mun)$ whose restriction to each $V_i$ belongs to $\Zo_G(V_i)$. We conclude using \rref{lemm:equ_cover}.

\eqref{lemm:ZG:s}: Any $G$-irreducible closed subscheme of $X/\mun$ is of the form $Y/\mun$ for some $G$-invariant closed subscheme $Y$ of $X$ by \rref{lemm:closed_free}. Let $\overline{Y}$ be the scheme-theoretic closure of $Y$ in $\overline{X}$. Replacing $X\to \overline{X}$ with $Y \to \overline{Y}$, it will suffice in view  of \rref{lemm:Girred} to prove that $d \circ u_* \circ \{\sch\}[X/\mun]$ is the image of a cycle in $\Zo_G(\overline{X}/\mun)$ under the assumption that $X/\mun$ is equidimensional and $G$-irreducible. By \eqref{lemm:ZG:L}, we may find a non-empty, and thus dense, $G$-invariant open immersion $v\colon V \to X/\mun$ and a nowhere vanishing section $l$ of $\Lch|_V$ of weight $g \in \Char{G}$. The image $b \in H^0(V,\Gm)$ of $l^{\otimes n}$ under the morphism $\Lch^{\otimes n} \to \Oc_{X/\mun}$ (induced by the multiplication of $\Oc_X$) has weight $n\cdot g$ as a section of $\Oc_V$, and is a lifting of $\sch|_V \in H^0(V,\Gm/n)$. Thus by \dref{lemm:Gmn}{lemm:Gmn:pushforward}
\[
d \circ u_* \circ \{\sch\}[X/\mun] = d \circ u_* \circ \{\sch\} \circ v_*[V] = d \circ (u \circ v)_* \circ \{\sch|_V\} [V] \in \Zo(\overline{X}/\mun)/n
\]
is the class modulo $n$ of the cycle $d \circ (u \circ v)_* \circ \{b\} [V] \in \Zo(\overline{X}/\mun)$, which belongs to $\Zo_G(\overline{X}/\mun)$ by \rref{lemm:div_weight}.
\end{proof}

\begin{proposition}
\label{prop:deg_Ginv}
Let $G$ be a finite diagonalisable $p$-group containing $\mup$. Let $X$ be a variety with a $G$-action such that $X^\mup$ is projective and $X^G=\varnothing$. Then $\deg \varrho(X) =0 \in \Fp$ (see \rref{def:varrho}).
\end{proposition}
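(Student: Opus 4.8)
The plan is to prove the statement by combining two ingredients: first, that the class $\varrho(X) \in \CH(X^\mup)/p$ lies in the image of the subgroup $\Zo_G(X^\mup)/p$ of classes of equivariant cycles introduced in \S\ref{sect:equiv}; and second, that the degree map kills the image of $\Zo_G(X^\mup)$ modulo $p$, precisely because $X^\mup$ is projective and $(X^\mup)^G = X^G = \varnothing$. To even speak of $G$-equivariance of the objects entering $\rho_X$ (\rref{def:rho}), I would first put a $G$-action on $\Au$ extending the $\mup$-action of \rref{ex:Au}: since $\Char G \to \Char\mup = \Zz/p$ is surjective, I may choose $\gamma \in \Char G$ lifting the canonical generator and declare $x$ to have weight $\gamma$. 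Then $X \times \Au$, the open subscheme $X^\circ$, the quotients $X^\circ/\mup$ and $(X\times\Au)/\mup$ all carry compatible $G$-actions, the subgroup $\mup$ acts freely on $X^\circ$ by \rref{lemma:nofix_free}, and $X^\mup$ carries the residual $G$-action.

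For the first ingredient I would trace $[X^\circ/\mup]$ through $\rho_X = \partial_X \circ \{\sch\} \circ c(-\Lch)$. The fundamental cycle $[X^\circ/\mup]$ lies in $\Zo_G(X^\circ/\mup)$, its equidimensional parts being $G$-invariant. Applying \dref{lemm:ZG}{lemm:ZG:c1} repeatedly shows that $c(-\Lch)[X^\circ/\mup]$ is represented modulo $p$ by a cycle $\beta \in \Zo_G(X^\circ/\mup)$. Now $\partial_X \circ \{\sch\}$ is computed at the level of cycles by lifting along the open immersion $u \colon X^\circ/\mup \to (X \times \Au)/\mup$, applying $\{\sch\}$ and the differential $d$, and restricting to the closed complement $X^\mup$; the operator $d \circ u_* \circ \{\sch\}$ is exactly the composite of \dref{lemm:ZG}{lemm:ZG:s} (taking there $\overline X = X\times\Au$), which carries $\beta$ into the image of $\Zo_G((X\times\Au)/\mup)$. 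Since the resulting cycle is supported on $X^\mup$, \rref{lemm:proj_Ginv} (stated precisely for this purpose) shows that its restriction to $X^\mup$ lies in the image of $\Zo_G(X^\mup)$. Hence $\varrho(X)$ belongs to the image of $\Zo_G(X^\mup)/p$ in $\CH(X^\mup)/p$.

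For the second ingredient I would prove, by induction on $|G|$, the following: for every finite diagonalisable $p$-group $G$ acting on a projective variety $W$ with $W^G = \varnothing$, the map $\deg$ sends $\Zo_G(W)$ into $p\Zz$. The case $|G|=1$ is vacuous, since then $W = W^G = \varnothing$. For $|G| > 1$, choose a subgroup $N \simeq \mup$ of $G$, set $W_0 = W^N$ and $U = W - W_0$, and consider a generator $[Y] \in \Zo_G(W)$ with $Y$ equidimensional and $G$-invariant. If $\dim Y > 0$ then $\deg[Y] = 0$, because $\deg$ annihilates cycles of positive dimension. If $\dim Y = 0$ then $Y$ is finite and decomposes as the disjoint union of the $G$-invariant clopen pieces $Y \cap U$ and $Y \cap W_0$: on $Y \cap U$ the group $N$ acts freely, so $Y \cap U \to (Y\cap U)/N$ is a $\mup$-torsor of degree $p$ and $\deg[Y\cap U] \in p\Zz$; on $Y \cap W_0$ the group $N$ acts trivially, so $[Y\cap W_0] \in \Zo_{G/N}(W_0)$ with $W_0^{G/N} = W^G = \varnothing$, and the inductive hypothesis applied to $G/N$ acting on the projective variety $W_0$ gives $\deg[Y\cap W_0] \in p\Zz$. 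Applying this to $W = X^\mup$ and combining with the first ingredient yields $\deg\varrho(X) = 0$.

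The main obstacle is the cycle-level bookkeeping in the first ingredient. The operation $\rho_X$ is defined on Chow groups with coefficients $A(-,M)$, whereas $G$-equivariance is a property of honest cycles, so one must check that the connecting homomorphism $\partial_X$ preceded by $\{\sch\}$ is genuinely computed by the composite of \dref{lemm:ZG}{lemm:ZG:s}, and that passing from a $G$-equivariant cycle on $(X\times\Au)/\mup$ supported on $X^\mup$ to a $G$-equivariant cycle on $X^\mup$ is legitimate, which is precisely the content of \rref{lemm:proj_Ginv}. By contrast the degree estimate is elementary, the key simplification being that only the zero-dimensional part of an equivariant cycle contributes to its degree.
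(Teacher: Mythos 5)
Your global strategy is exactly the paper's: lift the $\mup$-action on $\Au$ to a $G$-action, represent $c(-\Lch)[X^\circ/\mup]$ by a cycle in $\Zo_G(X^\circ/\mup)$ via \dref{lemm:ZG}{lemm:ZG:c1}, push it through $d \circ u_* \circ \{\sch\}$ via \dref{lemm:ZG}{lemm:ZG:s}, and then kill the degree using $X^G=\varnothing$. But both of your ingredients founder on the same phenomenon, which the paper's lemmas are worded carefully to avoid: a $G$-invariant closed subscheme that is set-theoretically supported on a fixed locus need not be a closed subscheme of that fixed locus --- its nilpotents can stick out, and $G$ acts nontrivially on them. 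Concretely, \rref{lemm:proj_Ginv} does \emph{not} say that the restriction to $X^\mup$ lies in the image of $\Zo_G(X^\mup)$; it says that the pushforward of this restriction is a $\Zz$-linear combination of classes $[Z]$ where $Z$ is an equidimensional $G$-invariant closed subscheme of the \emph{ambient} quotient $(X\times\Au)/\mup$ merely supported on $X^\mup$. Such $Z$ are in general not subschemes of $X^\mup$, so your second ingredient, formulated for $\Zo_G(W)$ with $W=X^\mup$, never gets to see these cycles.

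The second ingredient is where the real damage is: its statement is true, but your inductive proof is broken, and it breaks precisely in the case needed above. The claim that $N$ acts trivially on the clopen piece of $Y$ supported on $W_0=W^N$ is false. Take $G=\mu_{p^2}$, $N=\mup$, and $W=(\mu_{p^2}/\mup)\times \Pp^1$, with $G$ acting by translation on the first factor and on $\Pp^1$ by giving the affine coordinate $x$ weight $1\in\Zz/p^2=\Char{G}$. Then $W$ is projective and $W^G=\varnothing$ (a nontrivial diagonalisable group translating on itself has empty fixed locus, the defining ideal containing the invertible elements $e_g$, $g\neq 0$), while $W_0=(\mu_{p^2}/\mup)\times\{0,\infty\}$. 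The subscheme $Y=(\mu_{p^2}/\mup)\times \Spec k[x]/(x^2)$ is zero-dimensional, $G$-invariant and supported on $W_0$, yet $N$ acts nontrivially on it ($x$ has weight $1\neq 0$ in $\Zz/p=\Char{N}$) and it is not a closed subscheme of $W_0$, so the inductive call to $(G/N,W_0)$ does not cover $[Y]$. Reading $Y\cap W_0$ as the scheme-theoretic intersection does not help: then $N$ does act trivially on it, but the decomposition $[Y]=[Y\cap U]+[Y\cap W_0]$ fails ($\deg[Y]=2p$, while $[Y\cap U]=0$ and $\deg[Y\cap W_0]=p$). Nor does the special choice $W=X^\mup$ save you: $\mup$ acts trivially there, but the recursion replaces $G$ by $G/\mup$ still acting on $X^\mup$, and at the next level the chosen copy of $\mup$ inside $G/\mup$ acts nontrivially, so exactly this situation occurs. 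The missing idea is the paper's \rref{lemm:deg_nofix}, which you never invoke: for \emph{any} zero-dimensional variety with a $G$-action and empty fixed locus, $\deg$ is divisible by $p$, proved by a direct counting argument with the $\Char{G}$-grading that needs neither a free nor a trivial action. Applying it to each zero-dimensional $Z$ furnished by \rref{lemm:proj_Ginv} --- legitimate since set-theoretically $Z^G\subset (X^\mup)^G=X^G=\varnothing$ --- closes both gaps at once and finishes the proof as in the paper, with no induction on $|G|$.
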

\begin{proof}
By \dref{lemm:ZG}{lemm:ZG:c1}, the cycle class $c(-\Lch)[X^\circ/\mup] \in \CH(X^\circ/\mup)$ is represented by some cycle $\alpha \in \Zo_G(X^\circ/\mup)$. Therefore the modulo $p$ cycle class
\[
\varrho(X) = \partial_X \circ \{\sch\} \circ c(-\Lch)[X^\circ/\mup] \in \CH(X^\mup)/p
\]
is represented by the element $\beta \in \Zo(X^\mup)/p$ such that
\[
i_*(\beta) = d \circ u_* \circ \{\sch\}(\alpha) \in \Zo(X/\mup)/p
\]
where $u\colon X^\circ/\mup \to X/\mup$ is the open immersion, and $i\colon X^\mup \to X/\mup$ the closed immersion. It follows from \dref{lemm:ZG}{lemm:ZG:s} that $i_*(\beta) \in \Zo(X/\mup)/p$ is the image of a cycle $\gamma \in \Zo_G(X/\mup)$. Thus $\beta = i_*(\beta)|_{X^\mup} \in \Zo(X^\mup)/p$ is the image of $\gamma|_{X^\mup} \in \Zo(X^\mup)$, and to conclude the proof it will suffice to show that $\deg (\gamma|_{X^\mup}) = \deg \circ i_*( \gamma|_{X^\mup})$ is divisible by $p$. To do so, it suffices by \rref{lemm:proj_Ginv} to prove that $\deg [Y] \in p \Zz$ when $Y$ is a zero-dimensional $G$-invariant closed subscheme of $X/\mup$ supported on $X^\mup$. By \rref{lemm:deg_nofix} below it will suffice to prove that $Y^G =\varnothing$ for such $Y$. But for any field extension $L/k$, the subset $Y^G(L) \subset (X/\mup)(L)$ is contained in 
\[
((X/\mup)^G)(L) \cap (X^\mup)(L)= ((X^\mup)^G)(L) = X^G(L) = \varnothing.\qedhere
\]
\end{proof}

\begin{lemma}
\label{lemm:deg_nofix}
Let $G$ be a finite diagonalisable $p$-group acting on a zero-dimensional variety $X$. If $X^G=\varnothing$, then $\deg [X]$ is divisible by $p$.
\end{lemma}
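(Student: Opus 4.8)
The plan is to induct on the order $|G|$, the base case $|G|=1$ being immediate: then $X^G=X$, so $X^G=\varnothing$ forces $X=\varnothing$ and $\deg[X]=0$. For the translation between the cycle-theoretic and algebraic pictures, I would first record that, $X$ being zero-dimensional, it is finite over $k$, hence affine, say $X=\Spec B$ with $\dim_k B<\infty$, and that
\[
\deg[X]=\sum_{x\in X}\length(\Oc_{X,x})\cdot[k(x):k]=\dim_k B=\dim_k H^0(X,\Oc_X).
\]
This quantity is additive along disjoint unions of open–closed subschemes, which is the form in which I will use it. By \rref{lemm:action_grading} the $G$-action corresponds to a $\Char{G}$-grading on $B$, and the hypothesis $X^G=\varnothing$ says exactly that the ideal generated by the $B_h$ with $h\neq 0$ equals $B$.

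For the inductive step, assume $G$ nontrivial. Since $\Char{G}$ is a nontrivial finite abelian $p$-group it surjects onto $\Zz/p$, and this surjection dualises to a closed embedding $\mup\hookrightarrow G$. Because $G$ is abelian, the closed subscheme $X^\mup$ is $G$-invariant; and since $X$ is a finite discrete set, $X^\mup$ is also open. Thus $X=X^\mup\sqcup U$ is a disjoint union of $G$-invariant open–closed subschemes, where $U=X-X^\mup$, and by additivity it suffices to prove that $p$ divides both $\deg[U]$ and $\deg[X^\mup]$.

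The two pieces are handled by the two tools already developed. For $U$, we have $U^\mup=U\cap X^\mup=\varnothing$, so $\mup$ acts freely on $U$ by \rref{lemma:nofix_free}; the remark following \rref{def:free} then shows $\varphi_U\colon U\to U/\mup$ is finite and flat of degree $p$, whence $\dim_k H^0(U,\Oc_U)=p\cdot\dim_k H^0(U/\mup,\Oc_{U/\mup})$ and $p\mid\deg[U]$. For $X^\mup$, the subgroup $\mup$ acts trivially, so the $G$-action factors through the finite diagonalisable $p$-group $G/\mup$, of strictly smaller order; its fixed locus is $(X^\mup)^{G/\mup}=(X^\mup)^G=X^\mup\cap X^G=X^G=\varnothing$, using the inclusion $X^G\subseteq X^\mup$. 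The induction hypothesis applied to $G/\mup$ acting on $X^\mup$ then gives $p\mid\deg[X^\mup]$, completing the argument.

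The step I expect to require the most care is not any single computation but the verification that the scheme-theoretic identities $X=X^\mup\sqcup U$ and $(X^\mup)^{G/\mup}=X^G$ hold \emph{on the nose} rather than merely set-theoretically, so that the degrees recombine additively and the induction closes. Here the two structural inputs are zero-dimensionality (which turns the closed subscheme $X^\mup$ into an open–closed one, and makes $\deg[{-}]$ equal to $\dim_k H^0$) and the commutativity of $G$ (which guarantees that $X^\mup$ is $G$-invariant and that $X^G\subseteq X^\mup$); both are available, so the remaining work is routine.
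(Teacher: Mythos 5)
There is a genuine gap, and it sits exactly at the step you flagged and then declared routine: the claim that, because $X$ is zero-dimensional, $X^\mup$ is an \emph{open}-closed subscheme of $X$, so that $X=X^\mup\sqcup U$ scheme-theoretically and $\deg [X]=\deg [X^\mup]+\deg [U]$. Zero-dimensionality makes the underlying \emph{set} of $X^\mup$ open, but $X^\mup$ carries the fixed-locus scheme structure, which is in general strictly smaller than the open subscheme of $X$ supported on the same points: already for $\mu_2$ acting on $\Spec k[t]/(t^2)$ with $t$ of weight $1$, the fixed locus is $\Spec k$, a non-open closed subscheme. This phenomenon survives the hypothesis $X^G=\varnothing$. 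Take $p=2$, $G=\mu_2\times\mu_2$, and $X=\Spec A$ with $A=k[a,t]/(a^2-1,t^2)$, graded by $\Char{G}=\Zz/2\times\Zz/2$ via $\deg a=(0,1)$, $\deg t=(1,0)$. The homogeneous unit $a$ has nonzero weight, so the ideal generated by the components of nonzero weight is all of $A$, i.e.\ $X^G=\varnothing$: the hypotheses of the lemma hold. Now let $\mup\subset G$ be the first factor, dual to the first projection $\Char{G}\to\Zz/2$. Then $X^\mup$ is cut out by the ideal generated by $A_{(1,0)}=kt$ and $A_{(1,1)}=kat$, i.e.\ $X^\mup=\Spec k[a]/(a^2-1)$, whose support is all of $X$; hence $U=\varnothing$, $X^\mup$ is not open, and $\deg [X]=4$ while $\deg [X^\mup]+\deg [U]=2$. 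The identity on which your induction rests is false (the final congruence happens to be true here, but your argument does not prove it).

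The natural repair does not close the induction either. If you split $X=V\sqcup U$ with $V$ the open subscheme supported on the set underlying $X^\mup$ (this decomposition is genuine, $V$ and $U$ are $G$-invariant, and your free-action argument via \rref{lemma:nofix_free} correctly gives $p\mid\deg [U]$), then $\mup$ acts nontrivially on $V$ in general --- in the example above $V=X$ --- so the $G$-action on $V$ does not factor through $G/\mup$; $V$ simply satisfies the original hypotheses for the same group $G$, and induction on $|G|$ makes no progress. This is why the paper argues differently and non-inductively: since $X^G=\varnothing$, one covers $X$ by the $G$-invariant open-closed subschemes $D(f)$ with $f$ homogeneous of nonzero weight, so one may assume some $a\in A_g$, $g\neq 0$, is invertible; multiplication by $a$ then shows that $\dim_k A_h$ depends only on the class of $h$ modulo the subgroup $C=\langle g\rangle\subset\Char{G}$, so $\deg [X]=\dim_k A$ is divisible by $|C|$, a positive power of $p$. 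That counting argument handles precisely the locus your decomposition cannot reach, and is not recoverable from your approach.
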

\begin{proof}
The variety $X$ is the spectrum of a $\Char{G}$-graded $k$-algebra $A$ such that $\dim_k A < \infty$. Since $X^G = \varnothing$, the variety $X$ is covered by the $G$-invariant open (and closed) subschemes $D(f)$ for $f \in A_g$ with $g \in \Char{G}-\{0\}$. Replacing $X$ with this cover, we may assume that for some $g \in \Char{G}-\{0\}$ the subset $A_g$ contains an element $a \in A^\times$ . Let $C$ be the subgroup of $\Char{G}$ generated by $g$. For any $h\in \Char{G}$, multiplication with $a$ induces an isomorphism of $k$-vector spaces $A_h \to A_{g+h}$, hence $\dim_k A_h$ depends only on the class of $h$ in $\Char{G}/C$. Since the order of $C$ is divisible by $p$, so is for any $E \in \Char{G}/C$ the dimension of the $k$-vector space $\bigoplus_{e \in E} A_e$. Since $A$ is the direct sum of those vector spaces, it follows that $\dim_k A$ is divisible by $p$.
\end{proof}

\section{Isolated fixed points}
\numberwithin{theorem}{section}

\label{sect:mainth}

\begin{theorem}
\label{th:odd}
Let $X$ be a projective variety without connected component of dimension zero over an algebraically closed field. Assume that $\mu_2$ acts on $X$. Then the set underlying $X^{\mu_2}$ cannot consist in an odd number of regular points.
\end{theorem}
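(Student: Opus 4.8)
The plan is to evaluate $\deg\varrho(X)\in\Fp$ (with $p=2$) in two independent ways and compare the results. First I would set up the contradiction: suppose the set underlying $X^{\mu_2}$ is a finite set of regular closed points $x_1,\dots,x_m$. Since the base field is algebraically closed these are rational points, so $\deg[x_i]=1$, and the scheme $X^{\mu_2}$ is zero-dimensional. Consequently the class $\varrho(X)$ of \rref{def:varrho} lies entirely in degree $0$, that is $\varrho(X)=\varrho_0(X)\in\CH_0(X^{\mu_2})/2=\bigoplus_i\Fp\cdot[x_i]$; writing $\varrho(X)=\sum_i a_i[x_i]$ with $a_i\in\Fp$ gives $\deg\varrho(X)=\sum_i a_i=m\bmod 2$.

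The first evaluation is global: since $X$ is projective with no zero-dimensional connected component and $\mu_2$ acts on it, \rref{cor:df} yields $\deg\varrho(X)=0$. So it remains to show, via a second (local) evaluation, that every coefficient $a_i$ equals $1$, which forces $m$ to be even and contradicts the oddness of $m$.

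For the local evaluation, fix $i$ and observe that $\{x_j:j\neq i\}$ is a finite union of $\mu_2$-fixed points, hence a $\mu_2$-invariant closed subset; thus $U_i=X\setminus\{x_j:j\neq i\}$ is a $\mu_2$-invariant open subscheme containing $x_i$ with $U_i^{\mu_2}=\{x_i\}$ set-theoretically. Then $U_i^{\mu_2}$ is irreducible of dimension $0$ with generic point $x_i$, and $X$ (hence $U_i$) is regular at $x_i$ by hypothesis, so \rref{prop:nonvanishing} gives $\varrho_0(U_i)\neq 0$ in $\CH_0(U_i^{\mu_2})/2=\Fp$, i.e.\ $\varrho(U_i)=[x_i]$. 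Finally I would apply \rref{prop:func_rho:open} to the open immersion $u_i\colon U_i\to X$: since the flat pullback of $[X^\circ/\mu_2]$ is $[U_i^\circ/\mu_2]$, this gives $(u_i^{\mu_2})^*\varrho(X)=\varrho(U_i)$. As the open restriction $\CH_0(X^{\mu_2})/2\to\CH_0(U_i^{\mu_2})/2$ sends $[x_i]\mapsto[x_i]$ and $[x_j]\mapsto 0$ for $j\neq i$, the equality reads $a_i[x_i]=[x_i]$, whence $a_i=1$. Therefore $\deg\varrho(X)=m\bmod 2$, and comparison with the global value $0$ shows that $m$ is even.

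The substantive ingredients — that $\rho$ is well defined, the degree formula \rref{prop:df}/\rref{cor:df}, and the nonvanishing \rref{prop:nonvanishing} — are already in place, so the remaining difficulty is entirely one of localisation. The delicate step is producing a $\mu_2$-invariant open subscheme isolating a single fixed point and verifying that \rref{prop:nonvanishing} applies there; this is exactly where the regularity of the $x_i$ is used, since regularity is what guarantees $\varrho_0(U_i)\neq 0$. Transporting this local nonvanishing to the global class through \rref{prop:func_rho:open} is the crux, the rest being the comparison of the two evaluations of $\deg\varrho(X)$.
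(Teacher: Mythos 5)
Your proposal is correct and follows essentially the same route as the paper: isolate each fixed point in a $\mu_2$-invariant open subscheme $U_i$, apply the nonvanishing result \rref{prop:nonvanishing} there, and compare with $\deg\varrho(X)=0$ from \rref{cor:df}. The only cosmetic difference is that you compute the coefficients of $\varrho(X)$ by open restriction via \rref{prop:func_rho:open}, whereas the paper sums the local contributions via the pushforward decomposition \rref{prop:rho_open}; since the latter is derived from the former by exactly the pullback--pushforward bookkeeping you carry out, the two arguments coincide in substance.
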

\begin{proof}
Assume that $X^{\mu_2} = \{x_1,\cdots,x_n\}$ for some regular points $x_1,\cdots,x_n$ of $X$. 

Let $i\in \{1,\cdots,n\}$, and consider the $\mu_2$-invariant open subscheme $U_i = X - \{x_j|j\neq i\}$ of $X$. Since $\varrho(U_i) \neq 0 \in \CH((U_i)^{\mu_2})/2$ by \rref{prop:nonvanishing} and the set underlying $(U_i)^{\mu_2}$ is the single point $x_i$, it follows that $\deg \varrho(U_i) \neq 0 \in \mathbb{F}_2$. Therefore $\deg \varrho(U_i) =1 \in \mathbb{F}_2$. 

Now the open subschemes $U_i$ cover $X$, so that repeated applications of \rref{prop:rho_open} yield
\[
\deg \varrho(X) = \deg  \varrho(U_1) + \cdots + \deg  \varrho(U_n) = n \mod 2\in \mathbb{F}_2.
\]
It follows from the assumptions on $X$ and \rref{cor:df} that $\deg \varrho(X) = 0 \in \mathbb{F}_2$, and we conclude that $n$ must be even.
\end{proof}

\begin{theorem}
\label{th:isolated}
Let $X$ be a projective variety without connected component of dimension zero, and $G$ a finite diagonalisable $p$-group acting on $X$. Then the set underlying $X^G$ cannot be a single regular closed point of $X$ of degree prime to $p$.
\end{theorem}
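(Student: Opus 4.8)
The plan is to argue by induction on the order $|G|$, reducing the general case to the situation $G=\mup$ that is already implicit in \rref{prop:nonvanishing} and \rref{cor:df}. Since $X$ has a positive-dimensional component, the assumption $X^G=\{x\}$ forces $G\neq 1$, so the finite abelian $p$-group $\Char{G}$ surjects onto $\Zz/p$; the corresponding inclusion gives a subgroup $\mup\subset G$. I would then set $Z=X^\mup$, a projective variety on which $G$ acts through the finite diagonalisable $p$-group $G/\mup$ (because $\mup$ acts trivially on its own fixed locus), and observe that $Z^{G/\mup}=(X^\mup)^G=X^G=\{x\}$. By \rref{lemm:fixed_reg} the point $x$ is regular in $Z$, and $d=[k(x):k]$ is prime to $p$. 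The argument now splits according to whether $x$ is isolated in $Z$.

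Suppose first that $x$ is not isolated in $Z$. Let $C_0$ be the connected component of $Z$ containing $x$; as $x$ is $G$-fixed, $C_0$ is $G$-invariant, and being connected of positive dimension it is a projective variety with no zero-dimensional connected component. Its fixed locus is $C_0^{G/\mup}=C_0\cap X^G=\{x\}$, a single regular closed point of degree $d$ prime to $p$. Since $|G/\mup|<|G|$, applying the inductive hypothesis to the $G/\mup$-action on $C_0$ yields a contradiction.

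Suppose now that $x$ is isolated in $Z$, so that $\{x\}$ is a connected component of $Z=X^\mup$ and $W=Z\setminus\{x\}$ is closed in $X$, hence projective. Here I would invoke the degree formula. On the $\mup$-invariant open subscheme $V=X\setminus W$ one has $x\in V^\mup\subset\{x\}$; the scheme $V^\mup$ is regular, hence reduced, at $x$ by \rref{lemm:fixed_reg}, and since $x$ is isolated this forces $V^\mup=\Spec k(x)$ with $V$ regular at $x$. Thus \rref{prop:nonvanishing} gives $\varrho(V)=u\cdot[x]$ for some $u\in(\Fp)^\times$, whence $\deg\varrho(V)=ud\neq 0$. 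On the $G$-invariant open subscheme $V'=X\setminus\{x\}$ one has $(V')^G=\varnothing$ and $(V')^\mup=W$ projective, so \rref{prop:deg_Ginv} gives $\deg\varrho(V')=0$. Along the clopen decomposition $X^\mup=\{x\}\sqcup W$ the class $\varrho(X)$ restricts, by \rref{prop:func_rho:open}, to $\varrho(V)$ and $\varrho(V')$ on the two factors, so that $\deg\varrho(X)=\deg\varrho(V)+\deg\varrho(V')=ud\neq 0\in\Fp$. This contradicts \rref{cor:df}, which forces $\deg\varrho(X)=0$ because $X$ is projective without zero-dimensional component.

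The principal difficulty is the discrepancy between the two fixed loci: although $X^G$ is a single point, $X^\mup$ may be strictly larger and positive-dimensional at $x$, so the single-point computation of \rref{prop:nonvanishing} does not apply directly to $X$. The dichotomy above is what circumvents this---when $x$ sits in a positive-dimensional component the problem is transferred to the smaller group $G/\mup$ acting on a genuine subvariety, and the degree formula is needed only in the isolated case, which is the true base of the recursion. In that case the regularity of $x$ is essential, for \rref{lemm:fixed_reg} guarantees that $X^\mup$ is reduced at $x$, so that the non-zero multiplicity $u$ of \rref{prop:nonvanishing} is not annihilated by a length divisible by $p$; and the vanishing of the contribution of $W$ rests on \rref{prop:deg_Ginv}, that is, on the fact that $\varrho$ is represented by a $G$-equivariant cycle supported on $X^\mup$.
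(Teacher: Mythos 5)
Your proof follows the same route as the paper's: induction on $|G|$ through a subgroup $\mup\subset G$ and the quotient $G/\mup$, with the connected component of $x$ in $X^\mup$ handled by the inductive hypothesis when it is positive-dimensional, and the Rost-operation machinery (\rref{prop:nonvanishing}, \rref{prop:rho_open}, \rref{cor:df}, \rref{prop:deg_Ginv}) applied exactly as you do when $x$ is isolated in $X^\mup$. The paper merely organises your dichotomy sequentially: the inductive hypothesis applied to the component $Y\ni x$ shows $\dim Y=0$, after which $Y=X^G$ and one is in your second case with $W=X^\mup-Y$.

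There are, however, two places where your one-phrase justifications are not valid in the paper's setting, and both are precisely what \rref{prop:open_action} exists to repair. First, you claim that $C_0$ is $G$-invariant ``as $x$ is $G$-fixed''. The underlying intuition --- group elements permute the connected components and fix $x$ --- makes sense only for constant groups; here $G$ may be non-constant or even infinitesimal (e.g.\ $\mup$ in characteristic $p$, or $\mu_3$ over $\mathbb{Q}$), so it has no ``elements'' to permute anything with. The paper proves the invariance by applying \dref{prop:open_action}{prop:open_action:sat} to the open-and-closed subscheme $Y=C_0$ of $X^\mup$: the resulting $G$-invariant open $Y'\subset Y$ is closed by \tref{prop:open_action}{prop:open_action:sat}{prop:open_action:sat:closed} and contains the $G$-invariant closed subscheme $X^G$ by \tref{prop:open_action}{prop:open_action:sat}{prop:open_action:sat:inv}, hence is a non-empty open and closed subscheme of the connected $Y$, so $Y'=Y$. (A similar remark applies to your assertion that $V'=X\setminus\{x\}$ is $G$-invariant; this holds because $V'=X-X^G$ is the complement of a $G$-invariant closed subscheme.) Second, both of your appeals to \rref{lemm:fixed_reg} concern an ambient variety ($X$ itself in the first case, $V$ in the second) that is only regular \emph{at} $x$, whereas that lemma requires a regular variety. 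The paper's fix is again \tref{prop:open_action}{prop:open_action:sat}{prop:open_action:sat:inv}: choose a regular open subscheme $R\ni x$ of $X$ and shrink it to a $G$-invariant regular open $R'$, which still contains $x$ because it contains $X^G$; then \rref{lemm:fixed_reg} applied to $R'$ gives regularity of $X^\mup$ at $x$. Both facts you assert are true and the rest of your argument goes through, but in the diagonalisable framework they require these arguments rather than being automatic.
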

\begin{proof}
We proceed by induction on $|G|$. If $G$ is trivial, then $X^G =X$ has no isolated point, and the statement is proved. When $G$ is non-trivial, it contains $\mup$ as a subgroup (because the ordinary $p$-group $\Char{G}$ admits $\Zz/p$ as a quotient). We assume that the set underlying the closed subscheme $X^G$ is a single regular point $x$ of $X$ of degree prime to $p$. Let $Y$ be the connected component of $X^\mup$ containing $X^G$, and $Y'$ the maximal $G$-invariant open subscheme of $X^\mup$ contained in $Y$. Then $Y'$ is closed in $X^\mup$ by \tref{prop:open_action}{prop:open_action:sat}{prop:open_action:sat:closed}, and non-empty because $x \in Y'$ by \tref{prop:open_action}{prop:open_action:sat}{prop:open_action:sat:inv}. Since $Y$ is connected, it follows that $Y'=Y$. Thus $Y$ is a $G$-invariant open subscheme of $X^\mup$, and so is $Z=X^\mup-Y$. The point $x$ is contained in some regular open subscheme $R$ of $X$, which may be assumed to be $G$-invariant by \tref{prop:open_action}{prop:open_action:sat}{prop:open_action:sat:inv}. It follows from \rref{lemm:fixed_reg} that the variety $R^\mup$, hence also its open subscheme $R^\mup \cap Y$ is regular. Thus $x$ (being contained in the open subscheme $R^\mup \cap Y$) is a regular closed point of $Y$. The induction hypothesis applied to the action of the diagonalisable group $G/\mup = \Diag(\ker (\Char{G} \to \Char{\mup}))$ on $Y$ shows that $\dim Y=0$. Since $Y$ is connected and regular at $x$, it coincides with the closed point $x$ with reduced structure. As $X^G$ is a closed subscheme of $Y$ containing $x$, we must have $X^G=Y$. Thus we have obtained a $G$-equivariant decomposition as disjoint open subschemes $X^\mup = X^G \sqcup Z$.

The fixed loci $(X-Z)^\mup=X^G$ and $(X-X^G)^\mup=Z$ are projective. Since $X$ is projective, we have by \rref{cor:df} and \rref{prop:rho_open}
\[
0 = \deg \varrho(X) = \deg \varrho(X-Z) + \deg \varrho(X-X^G) \in \Fp.
\]
Since $\varrho(X-Z) \neq 0 \in \CH(X^G)/p$ by \rref{prop:nonvanishing} and $\deg[X^G]=[k(x):k] \not \in p\Zz$, we deduce that $\deg \varrho(X-Z) \neq 0 \in \Fp$. Thus $\deg \varrho(X-X^G) \neq 0 \in \Fp$, contradicting \rref{prop:deg_Ginv}.
\end{proof}

\end{document}